\numberwithin{equation}{section}
\newtheorem{theorem}{Theorem}[section]
\newtheorem{proposition}[theorem]{Proposition}
\newtheorem{corollary}[theorem]{Corollary}
\newtheorem{lemma}[theorem]{Lemma}
\theoremstyle{definition}
\newtheorem{definition}[theorem]{Definition}
\theoremstyle{remark}
\newtheorem{remark}[theorem]{Remark}
\begin{document}

\title[Divergent solutions
 ]
{
   Divergent  solutions to the 5D Hartree Equations
 }

 \author{Daomin Cao and  Qing Guo
}

\address{Academy of Mathematics and Systems Science, Chinese Academy of Sciences, Beijing 100190, P.R. China}

\email{dmcao@amt.ac.cn}

\address{Academy of Mathematics and Systems Science, Chinese Academy of Sciences, Beijing 100190, P.R. China}

\email{guoqing@amss.ac.cn}





\begin{abstract}
We consider the Cauchy problem for the focusing Hartree equation
$iu_{t}+\Delta u+(|\cdot|^{-3}\ast|u|^{2})u=0$ in $\mathbb{R}^{5}$
with the initial data in $H^1$, and study the divergent property of
infinite-variance and nonradial
  solutions. Letting $Q$ be the ground state
solution of $-Q+\Delta Q+(|\cdot|^{-3}\ast|Q|^{2})Q=0 $ in $
\mathbb{R}^{5}$, we prove that if $u_{0}\in H^{1}$ satisfying
$M(u_0) E(u_0)<M(Q) E(Q)$ and
 $\|\nabla u_{0}\|_{2}\|u_{0}\|_{2} >\|\nabla Q\|_{2}\|Q\|_{2} ,$
then the corresponding solution $u(t)$  either blows up in finite
forward time, or  exists globally for positive time and there exists
a time sequence $t_{n}\rightarrow+\infty$ such that $\|\nabla
u(t_{n})\|_{2}\rightarrow+\infty.$ A similar result holds for
negative time.
\end{abstract}

\maketitle
MSC: 35Q55, 35A15,
35B30.\\

Keywords: Hartree equation; Blow up; Profile decomposition; Divergence

\section{Introduction}

In this paper, we consider the following  Cauchy problem for the 5D Hartree equation  

\begin{equation}\label{1.1}
\left\{ \begin{aligned}
         \ iu_{t}+\Delta u+(V\ast|u|^{2})u&=0,\ \ \ (x,t)\in \mathbb{R}^{5}\times \mathbb{R}, \\
                  \ u(x,0)&=u_{0}(x)\in H^{1}(\mathbb{R}^{5}),
                          \end{aligned}\right.
                          \end{equation}
where $V(x)=|x|^{-3}$~and~$\ast$~denotes the convolution in~$
\mathbb{R}^{5}.$

Hartree type nonlinearity $(|\cdot|^{2-N}\ast|u|^{2})u$ in
$\mathbb{R}^N$ describes the dynamics of the mean-field limits of
many-body quantum systems such as coherent states and condensates.
The case $N=4$ gives  the $L^2$-critical Hartree equation, the
solution of which, by the authors in \cite{miaol2},  scatters when
the mass of the initial data is strictly less than that of the
ground state.
A large amount of work has been devoted to the theory of scattering
for the Hartree equation, see for example  \cite{miao}, \cite{JG2},
\cite{JG3}, \cite{nakanishi}, \cite{gao}.

 It is well known from Ginibre and Velo  \cite{JG1} that,
\eqref{1.1}~is locally well-posed in~$ H^{1}$. Namely, for
~$u_{0}\in H^{1},$~there exist~$0<T\leq\infty$~and a unique
solution~$u(t)\in C([0,T);H^{1})$~to  ~\eqref{1.1}.~ When
$T<\infty,$ we  have ~$\lim_{t\uparrow T}\|\nabla
u(t)\|_{2}\rightarrow\infty$~  and say that solution $u$ blows up in
finite positive time. On the other hand, when  $T=\infty,$   the
solution is called positively  global. Note that the local theory
gives nothing about the behavior of $\|\nabla u(t)\|_{2}$ as
$t\uparrow+\infty.$ Solutions of \eqref{1.1} admits the following
conservation laws in energy space  ~$H^{1}:$~
\begin{align*}
L^{2}-norm:\ \ \ \ M(u)(t)&\equiv \int|u(x,t)|^{2}dx=M(u_{0});\\
Energy:\ \ \ \ E(u)(t)&\equiv \frac{1}{2}\int|\nabla u(x,t)|^{2}dx-\frac{1}{4}\int\int_{\mathbb{R}^{5}\times \mathbb{R}^{5}}\frac{|u(x,t)|^{2}|u(y,t)|^{2}}{|x-y|^3}dxdy=E(u_{0});\\
Momentum:\ \ \ \ P(u)(t)&\equiv Im\int\overline{u}(x,t)\nabla u(x,t)dx=P(u_{0}).
\end{align*}

In \cite{gao}, it is proved that if $u_{0}\in H^{1}$, $M(u_0)
E(u_0)<M(Q) E(Q)$ and
 $\|\nabla u_{0}\|_{2}\|u_{0}\|_{2}>\|\nabla Q\|_{2}\|Q\|_{2}$, then the solution $u(t)$ to \eqref{1.1} blows up in finite time provided
 $\|xu_0\|_{L^2}<\infty$ or $u_0$ is radial. Note that it is sharp in the sense that $u(t)=e^{it}Q(x)$ solves \eqref{1.1}
  and does not blow-up in finite time.

In this paper, in the  spirit of Holmer and Roudenko \cite{holmer10}
dealing with the cubic 3D Schr\"{o}dinger equation, without assuming
finite variance and radiality we obtain the following result:

\begin{theorem}\label{th1}

Suppose that $u_{0}\in H^{1}$, $M(u_0) E(u_0)<M(Q) E(Q)$ and
 $\|\nabla u_{0}\|_{2}\|u_{0}\|_{2} >\|\nabla Q\|_{2}\|Q\|_{2}$.
Then either  $u(t)$~blows up in finite forward time,  or~$u(t)$~is forward global and there exists a
time sequence $t_{n}\rightarrow+\infty$~such that  $\|\nabla u(t_{n})\|_{2}\rightarrow+\infty.$
A similar statement holds for negative time.
\end{theorem}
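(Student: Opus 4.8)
\emph{Strategy.} The plan is to run the finite-variance virial argument of \cite{gao} with a spatially truncated weight, in the spirit of \cite{holmer10}, and to use a profile decomposition to dispose of the error terms that the truncation produces. Accordingly, the first step is to isolate the two consequences of the hypotheses that persist under the flow. The sharp Gagliardo--Nirenberg inequality $\iint \frac{|v(x)|^2|v(y)|^2}{|x-y|^3}\,dx\,dy\le C_{GN}\|\nabla v\|_2^3\|v\|_2$ together with the Pohozaev identities for $Q$ (which give $\|\nabla Q\|_2^2=3\|Q\|_2^2$, $E(Q)=\tfrac12\|Q\|_2^2$, hence $M(Q)E(Q)=\tfrac12\|Q\|_2^4$ and $y_\ast:=\|\nabla Q\|_2^2\|Q\|_2^2=3\|Q\|_2^4$) yield $M(u)E(u)\ge\Phi\big(\|\nabla u\|_2^2\|u\|_2^2\big)$ with $\Phi(y)=\tfrac12 y-\tfrac14 C_{GN}y^{3/2}$, and $\Phi$ attains its maximum $M(Q)E(Q)$ precisely at $y_\ast$. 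A connectedness argument then produces $\delta>0$, depending only on $M(u_0)E(u_0)/M(Q)E(Q)<1$, with $\|\nabla u(t)\|_2^2\|u(t)\|_2^2\ge(1+\delta)y_\ast$ on the forward lifespan. Since the virial functional here is
\[
\mathcal K(u):=8\|\nabla u\|_2^2-6\iint \frac{|u(x)|^2|u(y)|^2}{|x-y|^3}\,dx\,dy=-4\|\nabla u\|_2^2+24E(u),
\]
and $24E(u_0)=24M(u_0)E(u_0)/M(u_0)<4y_\ast/M(u_0)$, the trapping gives $\mathcal K(u(t))\le -c<0$ with $c:=4\delta y_\ast/M(u_0)$ for all $t$ in the lifespan. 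Finally one Galilean-transforms to reduce to $P(u_0)=0$: this only decreases $M(u_0)E(u_0)$, and, using the strict inequality $\Phi(y_\ast)-\Phi(y)<\tfrac12(y_\ast-y)$ for $y<y_\ast$, it preserves $\|\nabla u_0\|_2\|u_0\|_2>\|\nabla Q\|_2\|Q\|_2$ as well.

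\emph{The truncated virial.} Suppose $u$ does not blow up in finite forward time, so it is forward-global, and suppose for contradiction that $M_1:=\sup_{t\ge0}\|\nabla u(t)\|_2<\infty$, so that $\mathcal K(u(t))=-4\|\nabla u(t)\|_2^2+24E(u_0)$ is also bounded below. Fix a radial $\varphi\in C^\infty(\mathbb{R}^5)$ with $\varphi(x)=|x|^2$ for $|x|\le1$, $\varphi$ constant for $|x|\ge2$, $\varphi''(r)\le2$, $\varphi'(r)/r\le2$ for all $r$, and bounded higher derivatives; set $\varphi_R(x)=R^2\varphi(x/R)$ and $z_R(t)=\int\varphi_R|u(t)|^2$, so $0\le z_R(t)\le CR^2\|u_0\|_2^2$ and $|z_R'(t)|\le CR\|u_0\|_2 M_1$. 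A direct computation (symmetrizing the nonlocal term as usual) gives $z_R''(t)=\mathcal K(u(t))+\mathcal E_R(t)$; exploiting the good sign of the Hessian contribution (from $\varphi''\le2$, $\varphi'/r\le2$) and bounding the biharmonic term by $O(R^{-2})$, one obtains $\mathcal E_R(t)\le CR^{-2}\|u_0\|_2^2+C\eta_R(t)^{1/4}$ with $\eta_R(t):=\int_{|x|>R}|u(t,x)|^2\,dx$, the Hartree tail being controlled by Hardy--Littlewood--Sobolev, the interpolation $\|u\|_{L^{20/7}(|x|>R)}\le\|u\|_{L^2(|x|>R)}^{1/4}\|u\|_{L^{10/3}}^{3/4}$, and the uniform $H^1$ bound. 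Integrating twice, using $\int_0^T(T-s)\eta_R(s)^{1/4}\,ds\le T^{7/4}\big(\int_0^T\eta_R\big)^{1/4}$, and dividing by $T^2$,
\[
0\le \frac{z_R(T)}{T^2}\le \frac{CR^2}{T^2}+\frac{CR}{T}-\frac{c}{2}+\frac{C}{R^2}+C\Big(\frac1T\int_0^T\eta_R(s)\,ds\Big)^{1/4}.
\]
Letting $T\to\infty$ with $R=R(T)\to\infty$ slowly, every term on the right vanishes except $-c/2$ and the last one, so it remains only to prove $\tfrac1T\int_0^T\eta_{R(T)}(s)\,ds\to0$.

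\emph{No escape of mass.} Here the profile decomposition does the work. The contradiction hypothesis gives a global, $H^1$-bounded solution which by Step 1 cannot scatter: scattering would force $\|u(t)\|_{L^p}\to0$, hence $\|\nabla u(t)\|_2^2\|u(t)\|_2^2\to 2M(u_0)E(u_0)<\|Q\|_2^4$, below the trapping floor $(1+\delta)\,3\|Q\|_2^4$. One then shows that such a solution is uniformly localized in space — equivalently, $\{u(t)\}_{t\ge0}$ is precompact in $L^2(\mathbb{R}^5)$ after the Galilean reduction — so that $\eta_R(t)\to0$ uniformly in $t$ as $R\to\infty$, which is stronger than what the previous step requires. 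The mechanism I have in mind: apply the $H^1$ profile decomposition to $\{u(t_n)\}$ along a time sequence realizing the maximal escaping mass, use that $M$, $E$ and $\|\nabla\cdot\|_2^2$ are asymptotically additive over spatially separating profiles (the Hartree cross terms vanishing because $|x-y|^{-3}$ decays while the profile cores separate), note that any separated piece must carry positive effective mass--energy while the ``bulk'' must still obey $ME<M(Q)E(Q)$ above threshold, and apply the untruncated virial of \cite{gao} to a tight piece, which drives its variance to $-\infty$ unless that piece is trivial. Chasing the cases forces the escaping mass to be $0$.

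\emph{Conclusion and the obstacle.} With $\eta_R$ uniformly small, fix $R$ so large that $CR^{-2}\|u_0\|_2^2+C\sup_{t}\eta_R(t)^{1/4}<c/4$; then $z_R''(t)\le -c/2$ for all $t\ge0$, hence $z_R(T)\le z_R(0)+z_R'(0)T-\tfrac c4 T^2\to-\infty$, contradicting $z_R\ge0$. Therefore there exists $t_n\to+\infty$ with $\|\nabla u(t_n)\|_2\to+\infty$; applying the result to $v(x,t):=\overline{u(x,-t)}$, which solves the same equation, yields the negative-time statement. I expect the main obstacle to be the third step: without finite variance the truncation is unavoidable, and controlling its tails over long times amounts to ruling out that a global, $H^1$-bounded solution in the blow-up region $M(u_0)E(u_0)<M(Q)E(Q)$, $\|\nabla u_0\|_2\|u_0\|_2>\|\nabla Q\|_2\|Q\|_2$ sheds mass to spatial infinity; moreover the nonlocal Hartree term makes both the virial identity and the tail estimates (the Hardy--Littlewood--Sobolev exponents, the decay of the cross terms) more delicate than in the cubic NLS treatment of \cite{holmer10}.
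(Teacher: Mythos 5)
Your Steps 1 and 2 are sound and correspond to what the paper does in Sections 2 and 3: the sharp Hardy--Littlewood--Sobolev/Pohozaev bookkeeping and continuity-of-flow trapping is Proposition~\ref{p21'}/Proposition~\ref{p21}, and your truncated virial computation with the error bounded by $CR^{-2}+C\eta_R(t)^{1/4}$ (via HLS, interpolation and the assumed uniform $H^1$ bound) is exactly the content of \eqref{local virial}--\eqref{3.3} and Proposition~\ref{p32}/Remark~\ref{Hlocal}. The genuine gap is your Step 3, which you correctly identify as the crux but whose proposed mechanism does not work. First, the assertion that a forward-global, $H^1$-bounded, above-threshold solution has $\{u(t)\}_{t\ge0}$ precompact in $L^2$ \emph{without} spatial translations is unjustified: even for the special solution the paper eventually constructs, compactness holds only modulo a moving center $x(t)$ (Lemma~\ref{l91}, Corollary~\ref{c92}), and controlling that center requires a separate zero-momentum argument of the type of Lemma~\ref{l52} ($|x(t)|/t$ small); Galilean reduction alone does not pin the core near the origin. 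Second, and more fundamentally, your device for forcing the escaping mass to vanish --- ``apply the untruncated virial of \cite{gao} to a tight piece'' --- is circular: the profiles produced by the decomposition are neither of finite variance nor radial, so the blow-up criterion of \cite{gao} simply does not apply to them; proving divergence for precisely such data is the statement being proved. No single application of the profile decomposition along one time sequence yields localization of an \emph{arbitrary} global bounded above-threshold solution.

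What the paper does instead is an induction on a critical threshold: it defines $\sigma_c(\lambda_0)$ (Definition~\ref{d72}), seeds the induction with the near-boundary rigidity Proposition~\ref{p51} (which itself needs the Lions-type variational characterization of $Q$ from Section 4 and the center-of-mass control Lemma~\ref{l52}), and then, assuming $\sigma_c<\infty$, extracts a \emph{critical} solution via the nonlinear profile decomposition along the flow --- this requires the long-time perturbation theory Lemma~\ref{l62}, the dynamical Pythagorean expansion Lemma~\ref{l63}, and the profile reordering Lemma~\ref{l64} --- in Lemma~\ref{l81}. It is the minimality at the threshold $\sigma_c$ that forces all but one profile and the remainder to vanish, and hence gives precompactness modulo translation of that one critical trajectory (Lemma~\ref{l91}); only then is your Step 2 (the localized virial) run on the critical element to reach a contradiction, yielding $\sigma_c=\infty$ and the theorem. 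In short, the ``no escape of mass'' you need is not a property of every global bounded solution that one can verify directly; it is manufactured for a minimal solution by the inductive/critical-element machinery, and without that structure (or some substitute rigidity argument, including an analogue of Proposition~\ref{p51} to start the induction) your Step 3, and with it the whole proof, does not close.
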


\begin{remark}\label{p}
Using the same argument as in the introduction of \cite{holmer10}
(see more details in Appendix B there), via the Galilean
transformation, we will always assume in this paper that $P(u)=0$.
That is,  we need only show Theorem \ref{th1} under the condition
$P(u)=0.$ In fact,  on the one hand, by \cite{holmer10}, the
dichotomy result of Proposition \ref{p21'} and Proposition \ref{p21}
in section 2 below is preserved by the Galilean transformation;  On
the other hand, we get from the relationship between $u(t)$ with
nonzero momentum and its Galilean transformation $\tilde{u}(t)$
satisfying
$$\tilde{u}(x,t)=e^{ix\xi}e^{-it|\xi|^2}u(x-2\xi t,t)\ \ \  with\ \  \xi=\frac{P(u)}{M(u)}$$
that
$$P(\tilde{u})=0,\ \ M(\tilde{u})=M(u)=M(Q),\ \ \|\nabla \tilde{u}\|_{L^2}^2=\|\nabla u\|_{L^2}^2-\frac{P(u)^2}{2M(u)}.$$
Thus, Theorem \ref{th1} is also true by Galilean transformation.
\end{remark}
In  this paper, $H^{1}$ denotes the usual
Sobolev space $ W^{1,2}(\mathbb{R}^{5})$ and
$$\|u\|_{L^V}\equiv\left(\int\int_{\mathbb{R}^{5}\times \mathbb{R}^{5}}|u(x)|^2V(x-y)|u(y)|^2dxdy\right)^{\frac{1}{4}}.$$
As usual, we denote the  $L^{p}$ norm as $\|\cdot\|_{p}$ and use  the convention that
$c$ always  stands for the variant absolute  constants.

The rest of this paper is organized as follows. In section 2 we
recall the dichotomy and scattering results. In section 3, we
discuss  blow-up of solutions based on the virial identity and its
localized versions. Section 4 is devoted to the variational
characterization of the ground state and can be taken as a
preparation for section 5, in which we set up the inductive argument
that will be continued in section 7 and section 8. In section 6 we
introduce the linear and nonlinear profile decomposition lemmas that
needed in the argument in section 7 and section 8, where we give
proof of Theorem \ref{th1}.

\section{Ground state and dichotomy }

As in \cite{W1}, let $C_{HLS}$ be the best constant in the
 following Hardy-Littlewood-Sobolev inequality
\begin{align}\label{hls}
\int\int\frac{|u(x)|^2|u(y)|^2}{|x-y|^3}dxdy\leq C_{HLS}
\|u\|_2\|\nabla u\|_{2}^{3}.
\end{align}
Then it is attained at $Q$, which is the unique radial positive
solution to
\begin{align}\label{gs}
Q-\Delta Q=(V\ast Q^2)Q.
\end{align}

The uniqueness of the ground state of \eqref{gs} can be obtain by
the same method as in the cases of dimension three and four
(\cite{lieb} and \cite{Lenzmann} )
  by means of Newton's theorem \cite{liebbook}. In fact, it suffices to
 note that the convolution term in \eqref{1.1} is none other than the Newton potential in $\mathbb R^5$.

From \eqref{gs} we have
$$\int|Q|^2dx+\int|\nabla Q|^2dx-\| Q\|_{L^{V}}^{4}=0,$$
and the Pohozhaev identity
$$\frac{5}{2}\int|Q|^2dx+\frac{3}{2}\int|\nabla Q|^2dx-\frac{7}{4}\| Q\|_{L^{V}}^{4}=0.$$
These two equalities imply that

$$\| Q\|_{L^{V}}^{4}=\frac{4}{3}\|\nabla Q\|^{2}_{2}=4\| Q\|^{2}_{2}.$$
As a consequence,
\begin{align}\label{2.2}
C_{HLS}=\frac{\| Q\|_{L^{V}}^{4}}{\|Q\|_2\|\nabla Q\|^{3}_{2}}=\frac{4}{3}\frac{1}{\|Q\|_2\|\nabla Q\|_{2}},
\end{align}
and therefore
\begin{align}\label{2.3}
E(Q)=\frac{1}{6}\|\nabla Q\|^{2}_{2}.
\end{align}

Let
\begin{equation}\label{eta}
\eta(t)=\frac{\|\nabla u\|_{2}\|u\|_{2}}{\|\nabla Q\|_{2}\|Q\|_{2}}.
\end{equation}
By \eqref{hls}, \eqref{2.2} and \eqref{2.3} we have
\begin{equation}\label{2.5}
3\eta(t)^{2}
\geq \frac{E(u)M(u)}{E(Q)M(Q)}
\geq 3\eta(t)^{2}-2\eta(t)^{3}.
\end{equation}
Thus it is not difficult to observe that if ~$0\leq M(u)E(u)/
M(Q)E(Q)<1,$~ then there exist two solutions~$0\leq
\lambda_{-}<1<\lambda $ of the following equation of ~$\lambda $~
\begin{equation}\label{e}
 \frac{E(u)M(u)}{E(Q)M(Q)}
= 3\lambda^{2}-2\lambda^{3}.
\end{equation}
On the other hand, if $E(u)<0,$  there exists exactly one solution $\lambda >1$ to \eqref{e}.

By the ~$H^{1}$~local theory \cite{JG1}~, there exist~$-\infty\leq T_{-}<0<T_{+}\leq+\infty$~such that~$(T_{-},T_{+})$~is the maximal
time interval of existence for ~$u(t)$~solving~\eqref{1.1}~, and if~$T_{+}<+\infty$~then
$$\|\nabla u(t)\|_{2}\rightarrow+\infty\ \ \ as~t\uparrow T_{+},$$
A similar conclusion  holds if ~$ T_{-}>-\infty.$~ Moreover,
as a consequence of the continuity of the flow~$u(t),$~we have the following dichotomy proposition :
\begin{proposition}\label{p21'}
(Global versus blow-up dichotomy) Let $u_{0}\in H^{1}$, and let  $I=(T_{-},T_{+})$  be the
maximal time interval of existence of ~$u(t)$~ solving ~\eqref{1.1}.~
Suppose that
\begin{equation}\label{2.1'}
M(u)E(u)<M(Q)E(Q).
\end{equation}
If ~\eqref{2.1'}~holds and
\begin{equation}\label{2.2'}
\|u_{0}\|_{2}\|\nabla u_{0}\|_{2}<\|Q\|_{2}\|\nabla Q\|_{2},
\end{equation}
then ~$I=(-\infty,+\infty)$,~ i.e., the solution exists globally in
time, and for all time ~$t\in \mathbb{R},$~
\begin{equation}\label{2.3'}
\|u(t)\|_{2}\|\nabla u(t)\|_{2}<\|Q\|_{2}\|\nabla Q\|_{2}.
\end{equation}
If ~\eqref{2.1'}~holds and
\begin{equation}\label{2.4'}
\|u_{0}\|_{2}\|\nabla u_{0}\|_{2}>\|Q\|_{2}\|\nabla Q\|_{2},
\end{equation}
then for~$t\in I,$~
\begin{equation}\label{2.5'}
\|u(t)\|_{2}\|\nabla u(t)\|_{2}>\|Q\|_{2}\|\nabla Q\|_{2}.
\end{equation}

\end{proposition}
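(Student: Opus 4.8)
The plan is to prove the dichotomy by a standard continuity/connectedness argument using the sign-definiteness of the quantity $\eta(t)$ relative to $1$, exploiting the three conserved quantities and the variational inequalities \eqref{2.5} already established.

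\textbf{Setup.} Define $\eta(t)$ as in \eqref{eta}. Since $M(u)$ is conserved, $\eta(t)^2 = \|\nabla u(t)\|_2^2 \|u(t)\|_2^2 / (\|\nabla Q\|_2^2 \|Q\|_2^2)$ is a continuous function of $t$ on $I$ that is comparable to $\|\nabla u(t)\|_2^2$ (the constant of comparability depending only on $M(u_0)$ and $Q$). From \eqref{2.5} and the conservation of $E(u)$ and $M(u)$, writing $c_0 := M(u)E(u)/(M(Q)E(Q)) < 1$ by \eqref{2.1'}, we have for every $t \in I$
\begin{equation}\label{plan1}
3\eta(t)^2 - 2\eta(t)^3 \le c_0 < 1.
\end{equation}
The function $g(\lambda) = 3\lambda^2 - 2\lambda^3$ satisfies $g(0)=0$, $g(1)=1$, is strictly increasing on $[0,1]$ and strictly decreasing on $[1,\infty)$, with maximum value $1$ attained only at $\lambda = 1$. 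Hence the set $\{\lambda \ge 0 : g(\lambda) \le c_0\}$ is the disjoint union of $[0,\lambda_-]$ and $[\lambda,\infty)$ where $0 \le \lambda_- < 1 < \lambda$ are the two roots of \eqref{e} (when $E(u) < 0$ one has $c_0 < 0$, $\lambda_- $ does not exist and only the branch $[\lambda, \infty)$ survives; the argument is identical). Thus \eqref{plan1} forces, for each $t \in I$, either $\eta(t) \le \lambda_- < 1$ or $\eta(t) \ge \lambda > 1$; in particular $\eta(t) \ne 1$ on $I$.

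\textbf{Continuity argument.} Since $t \mapsto \eta(t)$ is continuous on the interval $I$ and never equals $1$, and $I$ is connected, $\eta(I)$ lies entirely in $[0,\lambda_-]$ or entirely in $[\lambda, \infty)$, determined by the value at $t=0$. If \eqref{2.2'} holds, then $\eta(0) < 1$, so $\eta(0) \le \lambda_-$, hence $\eta(t) \le \lambda_- < 1$ for all $t \in I$, which is exactly \eqref{2.3'}. Moreover this gives a uniform bound $\|\nabla u(t)\|_2 \le C(M(u_0),Q)$ on $I$; combined with the blow-up alternative from the $H^1$ local theory (if $T_+ < \infty$ then $\|\nabla u(t)\|_2 \to \infty$), this forces $T_+ = +\infty$, and symmetrically $T_- = -\infty$, so $I = \mathbb{R}$. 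If instead \eqref{2.4'} holds, then $\eta(0) > 1$, so $\eta(0) \ge \lambda$, hence $\eta(t) \ge \lambda > 1$ for all $t \in I$, which is \eqref{2.5'}.

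\textbf{Main obstacle.} There is no serious obstacle here: the only points requiring care are (i) verifying that \eqref{2.5} is genuinely available with the conserved-quantity normalization — i.e. that the right-hand inequality in \eqref{2.5} together with $E$, $M$ conservation yields \eqref{plan1} with a \emph{time-independent} constant $c_0$, which is immediate; (ii) the elementary analysis of $g(\lambda) = 3\lambda^2-2\lambda^3$ to justify the two-interval structure of its sublevel sets and the strict separation $\lambda_- < 1 < \lambda$; and (iii) correctly invoking the local well-posedness blow-up criterion of \cite{JG1} to upgrade the a priori $H^1$ bound in the subcritical case to global existence in both time directions. The case $E(u) < 0$ (or $E(u) = 0$ with $\|\nabla u_0\|_2 \ne 0$) is subsumed by noting that then $c_0 \le 0 < 1$ and the sublevel set of $g$ in $[0,\infty)$ reduces to a single unbounded branch, so only the second alternative \eqref{2.4'} is consistent with \eqref{2.1'}, and \eqref{2.5'} follows as before.
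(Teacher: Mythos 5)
Your proof is correct and follows essentially the same route as the paper: a continuity/connectedness argument based on the sharp Hardy--Littlewood--Sobolev bound and the conservation laws, which trap $\|u(t)\|_2\|\nabla u(t)\|_2$ in one of the two components of a sublevel set of a cubic polynomial. The only (cosmetic) difference is that you phrase it with the normalized quantity $\eta(t)$ and $g(\lambda)=3\lambda^2-2\lambda^3$ via \eqref{2.5}, whereas the paper works directly with $f(x)=\tfrac{1}{2}x^{2}-\tfrac{1}{4}C_{HLS}x^{3}$, which is the same function up to rescaling by $\|Q\|_2\|\nabla Q\|_2$ and $E(Q)M(Q)$.
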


\begin{proof} Multiplying the formula of energy by
$M(u)$ and using the Hardy-Littlewood-Sobolev inequality \eqref{hls},
we obtain
\begin{eqnarray}
E(u)M(u)&=&\frac{1}{2}\|\nabla
u\|_{L^2}^2\|u\|_{L^2}^{2}-\frac{1}{4}\|u\|_{L^{V}}^{4}\|u\|_{L^2}^{2}\nonumber\\
&\geq&\frac{1}{2}\|\nabla
u\|_{2}^2\|u\|_{2}^{2}-\frac{1}{4}C_{HLS}\|\nabla
u\|_{2}^3\|u\|_{2}^{3}.\nonumber
\end{eqnarray}
Define  $f(x)=\frac{1}{2}x^{2}-\frac{1}{4} C_{HLS}x^{3}.$ Then
$f'(x)=x(1-\frac{3C_{HLS}}{4}x),$ and  $f'(x)=0$~ when~ $x_{0}=0$~
and ~$x_{1}=\|\nabla Q\|_{2}\|Q\|_{2}=\frac{4}{3}\frac{1}{C_{HLS}}$
by \eqref{2.2}.  Note that~ $f(0)=0$ ~and
~$f(x_{1})=\frac{1}{6}\|\nabla Q\|_{2}^2\|Q\|_{2}^2.$  Thus   $f$
has two extrema: a local minimum at~ $x_{0}$  and a local maximum at
 $x_{1}$.  \eqref{2.1'} implies that
$E(u_{0})M(u_{0}) <f(x_{1})$,  which combined  with
energy conservation deduces that
\begin{equation}\label{2.6'}
f(\|\nabla
u\|_{2}\|u\|_{2})\leq E(u)M(u_{0})
=E(u)M(u)<f(x_{1}).
\end{equation}

 If initially
$\|\nabla u_{0}\|_{2}\|u_{0}\|_{2}<x_{1}$, i.e., \eqref{2.2'} holds,
then by \eqref{2.6'} and the continuity of ~$\|\nabla u(t)\|_{2}$ in
$t,$ we have  $\|\nabla u(t)\|_{2}\|u(t)\|_{2}<x_{1}$ for all $t\in
I.$ In particular, the $H^{1}$ norm of the solution is bounded,
which implies the global existence and~\eqref{2.3'}~ in this case.

 If initially~
$\|\nabla u_{0}\|_{2}\|u_{0}\|_{2}>x_{1}$,~i.e., \eqref{2.4'} holds,
then by~\eqref{2.6'}~ and the continuity of ~$\|\nabla u(t)\|_{2}$
in $t,$ we have  $\|\nabla u(t)\|_{2}\|u(t)\|_{2}>x_{1}$~for all
$t\in I,$ which proves~\eqref{2.5'}.
\end{proof}

The following is another statement of the Dichotomy Proposition in terms of
 $\lambda$ and $\eta(t)$ defined by \eqref{e} and \eqref{eta} respectively, which will be useful in the sequel.

\begin{proposition}\label{p21}
Let~$M(u)E(u)<M(Q)E(Q)$~and ~$0\leq \lambda_{-}<1<\lambda $~be defined as \eqref{e}. Then exactly one of the following holds:\\
(1)\ \ The solution~$u(t)$~to\eqref{1.1} is global and
$$\frac{1}{3} \frac{E(u)M(u)}{E(Q)M(Q)}
\leq \eta(t)^{2}\leq\lambda_{-}^{2},\ \ \ \forall~ t \in (-\infty,+\infty)$$
(2)\ \ ~$1<\lambda\leq \eta(t),$~~$\forall ~t \in (T_{-},T_{+}).$~

\end{proposition}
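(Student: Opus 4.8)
The plan is to derive Proposition \ref{p21} directly from Proposition \ref{p21'} together with the elementary inequality \eqref{2.5} and the definitions \eqref{e}, \eqref{eta}. The two propositions are essentially restatements of the same dichotomy: Proposition \ref{p21'} splits cases according to whether $\|u_0\|_2\|\nabla u_0\|_2$ is below or above $\|Q\|_2\|\nabla Q\|_2$, i.e.\ whether $\eta(0)<1$ or $\eta(0)>1$ (the equality case is excluded by \eqref{2.1'} and \eqref{2.5}, since $\eta=1$ would force $M(u)E(u)=M(Q)E(Q)$), and Proposition \ref{p21} repackages this in terms of $\lambda$ and $\lambda_-$. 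So I would organize the argument into two cases.

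\textbf{Case $\eta(0)<1$.} Here \eqref{2.2'} holds, so Proposition \ref{p21'} gives global existence and $\eta(t)<1$ for all $t\in\mathbb R$. It remains to sharpen this to the bounds in alternative (1). For the lower bound, use the left inequality in \eqref{2.5}, namely $3\eta(t)^2\ge \frac{E(u)M(u)}{E(Q)M(Q)}$, which immediately gives $\eta(t)^2\ge \frac13\frac{E(u)M(u)}{E(Q)M(Q)}$. For the upper bound, I would use the right inequality in \eqref{2.5}, $\frac{E(u)M(u)}{E(Q)M(Q)}\ge 3\eta(t)^2-2\eta(t)^3$, i.e.\ $g(\eta(t))\le \frac{E(u)M(u)}{E(Q)M(Q)}=g(\lambda_-)=g(\lambda)$ where $g(s)=3s^2-2s^3$. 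Since $g$ is increasing on $[0,1]$ and decreasing on $[1,\infty)$ with $g(\lambda_-)=g(\lambda)$ and $\lambda_-<1<\lambda$, the sublevel set $\{s\ge 0: g(s)\le g(\lambda_-)\}$ is exactly $[0,\lambda_-]\cup[\lambda,\infty)$. Because $\eta(t)<1$ rules out the branch $[\lambda,\infty)$, we conclude $\eta(t)\le\lambda_-$, which is alternative (1).

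\textbf{Case $\eta(0)>1$.} Here \eqref{2.4'} holds, so Proposition \ref{p21'} gives $\eta(t)>1$ for all $t\in(T_-,T_+)$. The same sublevel-set analysis of $g$ as above, now combined with $\eta(t)>1$, forces $\eta(t)\ge\lambda$, which is alternative (2). Finally I would note that the two cases are mutually exclusive (global with $\eta<1$ versus $\eta\ge\lambda>1$) and exhaustive (the boundary case $\eta(0)=1$ is impossible as noted), so exactly one of (1), (2) holds.

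The argument is essentially bookkeeping, so there is no serious obstacle; the only point requiring a little care is the monotonicity/sublevel-set analysis of $g(s)=3s^2-2s^3$ used to pin $\eta(t)$ into the correct interval, and the verification that $g(\lambda_-)=g(\lambda)$, which is exactly \eqref{e}. One should also double-check the degenerate subcase $E(u)<0$, where $\lambda_-$ may be taken as $0$ (equivalently the lower bound $\frac13\frac{E(u)M(u)}{E(Q)M(Q)}\le\eta(t)^2$ becomes vacuous), but the upper-bound/lower-bound reasoning via \eqref{2.5} goes through unchanged.
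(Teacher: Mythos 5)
Your argument is correct and is essentially the argument the paper intends: Proposition \ref{p21} is stated there without proof as a mere restatement of Proposition \ref{p21'}, and your derivation—combining that dichotomy with the sublevel-set analysis of $g(s)=3s^2-2s^3$ through the two-sided inequality $3\eta(t)^2\ge E(u)M(u)/(E(Q)M(Q))\ge 3\eta(t)^2-2\eta(t)^3$—is exactly the intended bookkeeping. One small imprecision: $\eta(0)=1$ forces $M(u)E(u)\ge M(Q)E(Q)$ (not equality) by the right-hand inequality, but this still contradicts the hypothesis, so your exclusion of the boundary case stands.
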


To easily understand, one can refer to the figure in
~\cite{holmer10} describing the relationship between
 ~$M(u)E(u)/M(Q)E(Q)$~and~$\eta(t)$.
Whether the solution is of the first or second type in
Proposition~\ref{p21}~is determined by  the initial data. Note that
the second case does not assert finite-time blow-up.

In the remainder  of this section, we will review the  Strichartz
estimates and some  facts  about the scattering. It is  well-known
that a pair of exponents~$(q,r)~$is Strichartz admissible if
$$\frac{2}{q}+\frac{5}{r}=\frac{5}{2},\ \ \ 2\leq q\leq\infty ,\ \ \ 2\leq r\leq\frac{10}{3}.$$
Similarly for~$s>0,$~ we say that ~$(q,r)~$is~$\dot{H}^{s}(\mathbb{R}^{5})$~admissible and denote it by
~$(q,r)\in\Lambda_{s}$~if
$$\frac{2}{q}+\frac{5}{r}=\frac{5}{2}-s,\ \ \ 4< q\leq\infty, \ \  \frac{10}{5-2s}\leq r<\frac{10}{3}.$$
Correspondingly, we denote~$(q',r')$~the
dual~$\dot{H}^{s}(\mathbb{R}^{5})$~admissible
by~$(q',r')\in\Lambda'_{s}$~if
~$(q,r)\in\Lambda_{-s}$~with~$(q',r')$~is the H\"older~ dual
to~$(q,r).$~ We  define the following Strichartz norm
$$\|u\|_{S(\dot{H}^{\frac{1}{2}})}=\sup_{(q,r)\in\Lambda_{\frac{1}{2}}}\|u\|_{L_t^qL_x^r}
 $$
and the dual Strichartz norm
$$\|u\|_{S'(\dot{H}^{-\frac{1}{2}})}=\inf_{(q',r')\in\Lambda'_{\frac{1}{2}}}\|u\|_{L_t^{q'}L_x^{r'}}=\inf_{(q,r)\in\Lambda_{-\frac{1}{2}}}\|u\|_{L_t^{q'}L_x^{r'}},$$
where~$(q',r')$~is the H\"older~ dual to~$(q,r).$~

So we have the following Strichartz estimates
$$\|e^{it\Delta}\phi\|_{S(L^{2})}\leq c\|\phi\|_2\ \ \ and\ \ \
\left\|\int_0^te^{i(t-t^1)\Delta}f(\cdot,t^1)dt^1\right\|_{S(L^2)}\leq c\|f\|_{S'(L^2)}.$$
Together with Sobolev embedding, we obtain
$$\|e^{it\Delta}\phi\|_{S(\dot{H}^{\frac{1}{2}})}\leq c\|\phi\|_{\dot{H}^{\frac{1}{2}}}\ \ \
and\ \ \ \left\|\int_0^te^{i(t-t^1)\Delta}f(\cdot,t^1)dt^1\right\|_{S(\dot{H}^{\frac{1}{2}})}\leq c\|D^{\frac{1}{2}}f\|_{S'(L^2)}.$$
In fact, we also have the following Kato inhomogeneous Strichartz estimate~\cite{kato}
\begin{align}\label{inhomo}
\left\|\int_0^te^{i(t-t^1)\Delta}f(\cdot,t^1)dt^1\right\|_{S(\dot{H}^{\frac{1}{2}})}\leq c\|f\|_{S'(\dot{H}^{-\frac{1}{2}})}.
\end{align}
In the sequel we will write ~$S(\dot{H}^{\frac{1}{2}};I)$~to
indicate a restriction to a time
subinterval~$I\subset(-\infty,+\infty).$

For the first case of the dichotomy proposition (Proposition \ref{p21}), we have furthermore scattering results
 that will be used  in the future discussion. We omit the proofs since they are similar to those in
 \cite{gao}.
\begin{lemma}\label{sd}
(Small data)
Let $\|u_{0}\|_{\dot{H}^{\frac{1}{2}}}\leq A$, then
there exists  $\delta_{sd}=\delta_{sd}(A)>0$ such that
~$\|e^{it\Delta}u_{0}\|_{S(\dot{H}^{\frac{1}{2}})}\leq \delta_{sd}, $~then~$ u $~solving \eqref{1.1} is global and
\begin{eqnarray}
&&\|u\|_{S(\dot{H}^{\frac{1}{2}})}\leq
2\|e^{it\Delta}u_{0}\|_{S(\dot{H}^{\frac{1}{2}})},\\
&&\|D^{\frac{1}{2}}u\|_{S(L^{2})}\leq
2c\|u_{0}\|_{\dot{H}^{\frac{1}{2}}}.
\end{eqnarray}
(Note that by
 Strichartz estimates, the hypotheses are satisfied if
~$\|u_{0}\|_{\dot{H}^{\frac{1}{2}}}\leq c\delta_{sd}. $)
\end{lemma}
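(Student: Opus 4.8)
The plan is to solve the Duhamel equation
\[
\Phi(u)(t)=e^{it\Delta}u_{0}+i\int_{0}^{t}e^{i(t-s)\Delta}N(u)(s)\,ds,\qquad N(u):=(V\ast|u|^{2})u,
\]
by a contraction mapping argument in a critical Strichartz space (the equation \eqref{1.1} being $\dot H^{1/2}$-critical by scaling). Put $\rho:=\|e^{it\Delta}u_{0}\|_{S(\dot H^{1/2})}$, which is finite with $\rho\leq c\|u_{0}\|_{\dot H^{1/2}}$ by the homogeneous Strichartz estimate, and carry out the fixed point on the closed ball
\[
X=\bigl\{\,u:\ \|u\|_{S(\dot H^{1/2})}\leq 2\rho\,\bigr\},\qquad d(u,v)=\|u-v\|_{S(\dot H^{1/2})},
\]
a complete metric space.

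The crux is the multilinear estimate for the Hartree term. Choosing appropriate Strichartz pairs, estimating the Riesz potential $V\ast|u|^{2}$ by the Hardy--Littlewood--Sobolev inequality, and then using H\"older in space and time together with Sobolev embedding, one should obtain
\[
\|N(u)\|_{S'(\dot H^{-1/2})}\leq c\,\|u\|_{S(\dot H^{1/2})}^{3},\qquad \|N(u)-N(v)\|_{S'(\dot H^{-1/2})}\leq c\bigl(\|u\|_{S(\dot H^{1/2})}^{2}+\|v\|_{S(\dot H^{1/2})}^{2}\bigr)\,d(u,v),
\]
and, invoking in addition the fractional Leibniz rule and the fact that $D^{1/2}$ commutes with convolution by $V$,
\[
\|D^{1/2}N(u)\|_{S'(L^{2})}\leq c\,\|u\|_{S(\dot H^{1/2})}^{2}\,\|D^{1/2}u\|_{S(L^{2})}.
\]
These are exactly the nonlinear estimates underlying the scattering theory of \cite{gao}, which is why the proof is omitted there and here. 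Granting them, the Strichartz estimates and the Kato inhomogeneous estimate \eqref{inhomo} give, for $u,v\in X$,
\[
\|\Phi(u)\|_{S(\dot H^{1/2})}\leq\rho+c\,(2\rho)^{3},\qquad d(\Phi(u),\Phi(v))\leq 2c\,(2\rho)^{2}\,d(u,v).
\]
Hence if $\delta_{sd}=\delta_{sd}(A)>0$ is chosen so small that $\rho\leq\delta_{sd}$ forces $16c\,\delta_{sd}^{2}\leq1$, then $\Phi$ is a self-map of $X$ contracting with factor $\leq\tfrac12$, and the Banach fixed point theorem yields a unique $u\in X$ solving \eqref{1.1}; by uniqueness it coincides with the $H^{1}$ solution of the local theory on its maximal interval, and since $\|u\|_{S(\dot H^{1/2};\mathbb{R})}\leq 2\rho<\infty$ the solution cannot break down, so it is global. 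The first asserted bound is the defining property of $X$; for the second, feed $u$ back into the Duhamel formula and use the $D^{1/2}$-estimate to get $\|D^{1/2}u\|_{S(L^{2})}\leq c\|u_{0}\|_{\dot H^{1/2}}+c(2\rho)^{2}\|D^{1/2}u\|_{S(L^{2})}$, where $\|D^{1/2}u\|_{S(L^{2})}$ is a priori finite by the $H^{1}$ local theory, so a short continuity argument (again using $16c\delta_{sd}^{2}\leq1$) gives $\|D^{1/2}u\|_{S(L^{2})}\leq 2c\|u_{0}\|_{\dot H^{1/2}}$. The parenthetical remark is immediate from $\|e^{it\Delta}u_{0}\|_{S(\dot H^{1/2})}\leq c\|u_{0}\|_{\dot H^{1/2}}$.

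The step I expect to be the main obstacle is the trilinear estimate on $N(u)$ in the dual $S'(\dot H^{-1/2})$ norm and its $D^{1/2}$-variant: one must locate admissible exponent pairs for which Hardy--Littlewood--Sobolev (absorbing the singular kernel $|x|^{-3}$), H\"older, Sobolev embedding and the fractional product rule together distribute precisely two $\dot H^{1/2}$-Strichartz factors, and one $D^{1/2}$-Strichartz factor in the $L^{2}$-estimate; the nonlocal convolution structure of the Hartree nonlinearity is what makes this bookkeeping more delicate than for a pure power. Everything afterwards --- the self-mapping and contraction inequalities, the choice of $\delta_{sd}$, and (if one wants it) scattering in $\dot H^{1/2}$ as $t\to\pm\infty$ via $\phi_{\pm}=u_{0}+i\int_{0}^{\pm\infty}e^{-is\Delta}N(u)\,ds$, convergent in $\dot H^{1/2}$ by \eqref{inhomo} and the finiteness of $\|u\|_{S(\dot H^{1/2};\mathbb{R})}$ --- is routine.
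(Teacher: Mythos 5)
Your contraction-mapping argument is correct and is essentially the proof the paper has in mind: the lemma is stated without proof precisely because it follows the standard small-data fixed-point scheme of \cite{gao}, using the Strichartz and Kato estimates \eqref{inhomo} exactly as you do. The trilinear bound you flag as the main obstacle is not an issue: the required exponent bookkeeping (Hardy--Littlewood--Sobolev plus H\"older with the pairs $(\tfrac{24}{13},\tfrac{12}{7})\in\Lambda'_{1/2}$ and $(\tfrac{24}{5},\tfrac{60}{19}),(8,\tfrac{20}{7})\in\Lambda_{1/2}$) is carried out explicitly in the proof of Lemma \ref{l62}, and the same estimates give your bounds on $N(u)$ and $D^{1/2}N(u)$.
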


\begin{theorem}\label{t22}
(Scattering). Suppose that $0<M(u)E(u)/M(Q)E(Q)<1$~ and the first
case of ~Proposition~\ref{p21}~holds, then ~$u(t)$~scatters as
~$t\rightarrow +\infty$~or ~$t\rightarrow -\infty.$~That is, there
exist~$\phi_{\pm}\in H^{1}$~such that
\begin{equation}\label{2.7}
 \lim_{t\rightarrow\pm\infty}\|u(t)-e^{it\Delta}\phi_{\pm}\|_{H^{1}}=0.
\end{equation}
Consequently,
\begin{equation}\label{2.8}
 \lim_{t\rightarrow\pm\infty}\|u(t)\|_{L^V}=0
\end{equation}
and
\begin{equation}\label{2.9}
 \lim_{t\rightarrow\pm\infty} \eta(t)^{2}=\frac{1}{3} \frac{E(u)M(u)}{E(Q)M(Q)}.
\end{equation}
\end{theorem}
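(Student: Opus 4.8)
The plan is to run the Kenig--Merle concentration--compactness/rigidity scheme, treating \eqref{1.1} as an $\dot H^{1/2}$-critical problem: it is invariant under $u\mapsto\lambda^{2}u(\lambda x,\lambda^{2}t)$, which leaves the $\dot H^{1/2}$ norm unchanged, so that $S(\dot H^{1/2})$ is the natural scattering norm. By Remark \ref{p} we may assume $P(u)=0$. It suffices to prove \eqref{2.7}; estimates \eqref{2.8} and \eqref{2.9} then follow by soft arguments given at the end. Scattering in the sense of \eqref{2.7} is equivalent to $\|u\|_{S(\dot H^{1/2})}<\infty$ (from finiteness one recovers $\phi_{\pm}$ via the Kato estimate \eqref{inhomo} and a tail argument), so I would argue by contradiction. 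Let $(ME)_{c}$ be the supremum of those $\delta\in(0,M(Q)E(Q)]$ with the property that every solution of \eqref{1.1} with $M(u_{0})E(u_{0})<\delta$ lying in the first alternative of Proposition \ref{p21} satisfies $\|u\|_{S(\dot H^{1/2})}<\infty$. Lemma \ref{sd} together with \eqref{2.5} (a small value of $M(u_{0})E(u_{0})$ forces $\eta(0)$, hence $\|u_{0}\|_{\dot H^{1/2}}$, to be small) shows $(ME)_{c}>0$; the task is to exclude $(ME)_{c}<M(Q)E(Q)$.

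Suppose $(ME)_{c}<M(Q)E(Q)$. I would choose $u_{0,n}$ in the first alternative of Proposition \ref{p21} with $M(u_{0,n})E(u_{0,n})\downarrow(ME)_{c}$ and $\|u_{n}\|_{S(\dot H^{1/2})}=\infty$. The sequence $\{u_{0,n}\}$ is bounded in $H^{1}$, so one applies a linear profile decomposition in $\dot H^{1/2}$ adapted to \eqref{1.1}, passes to the associated nonlinear profiles, and combines the Pythagorean expansions of mass and of energy (using the refined variational characterization of $Q$ from Section 4 to distribute the subthreshold constraint among the profiles) with a long-time perturbation/stability lemma for the Hartree flow. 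The usual dichotomy then forces exactly one nontrivial profile, whose nonlinear profile is a critical solution $u_{c}$: it is global on $\mathbb{R}$, lies in the first alternative of Proposition \ref{p21}, satisfies $M(u_{c})E(u_{c})=(ME)_{c}$ and $\|u_{c}\|_{S(\dot H^{1/2})}=\infty$, and has precompact trajectory in $H^{1}$ modulo translations, i.e.\ there is $x(t)$ so that $K=\{u_{c}(\cdot-x(t),t):t\in\mathbb{R}\}$ is precompact in $H^{1}$. (No scaling parameter survives in the critical profile: the $L^{2}$ mass is conserved but not scaling-invariant, so concentration at scale $0$ or $\infty$ would violate conservation of mass under the subthreshold constraint; only the translation path $x(t)$ remains.)

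For the rigidity step, $M(u_{c})E(u_{c})=(ME)_{c}<M(Q)E(Q)$ gives a root $\lambda_{-}<1$ of \eqref{e}, and the first alternative of Proposition \ref{p21} then forces $\eta_{c}(t)\le\lambda_{-}<1$ for all $t$ with a fixed gap. In the finite-variance situation the virial identity of Section 3 reads $\frac{d^{2}}{dt^{2}}\int|x|^{2}|u_{c}|^{2}\,dx=8\|\nabla u_{c}(t)\|_{2}^{2}-6\|u_{c}(t)\|_{L^{V}}^{4}$, and by \eqref{hls}--\eqref{2.2} the right-hand side is at least $8(1-\eta_{c}(t))\|\nabla u_{c}(t)\|_{2}^{2}\ge 8(1-\lambda_{-})\|\nabla u_{c}(t)\|_{2}^{2}$; precompactness of $K$ together with $u_{c}\not\equiv0$ upgrades this to a uniform lower bound $\ge c_{1}>0$. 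Precompactness and $P(u_{c})=0$ also yield $|x(t)|=o(|t|)$ as $|t|\to\infty$. I would then feed $u_{c}$ into the localized virial quantity of Section 3, $z_{R}(t)=\int\chi(x/R)\,|x|^{2}\,|u_{c}(x,t)|^{2}\,dx$ with $\chi$ a standard cutoff: for $R$ large (depending on the compactness modulus of $K$ and on a window $[0,T]$, which is possible because $|x(t)|=o(|t|)$) the localization errors are negligible, so $z_{R}''(t)\ge \tfrac12 c_{1}>0$ on $[0,T]$, while $|z_{R}'(t)|\le CR\|\nabla u_{c}(t)\|_{2}\|u_{c}(t)\|_{2}\le C_{1}R$ is bounded; integrating over $[0,T]$ gives $\tfrac12 c_{1}T\le z_{R}'(T)-z_{R}'(0)\le 2C_{1}R$, impossible for $T$ large. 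Hence $u_{c}\equiv0$, contradicting $\|u_{c}\|_{S(\dot H^{1/2})}=\infty$. Therefore $(ME)_{c}=M(Q)E(Q)$ and \eqref{2.7} holds.

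Finally, \eqref{2.8} and \eqref{2.9} are easy consequences. From \eqref{2.7} one gets $\|\nabla u(t)\|_{2}^{2}\to\|\nabla\phi_{\pm}\|_{2}^{2}$ as $t\to\pm\infty$; moreover $2<\tfrac{20}{7}<\tfrac{10}{3}$ and $(8,\tfrac{20}{7})\in\Lambda_{1/2}$, and a density argument (dispersive decay for Schwartz data, Schwartz dense in $H^{1}$) gives $\|e^{it\Delta}\phi_{\pm}\|_{20/7}\to0$; since a Hardy--Littlewood--Sobolev estimate of the type \eqref{hls} yields $\|w\|_{L^{V}}^{4}\le C\|w\|_{20/7}^{4}$, we obtain $\|u(t)\|_{L^{V}}\to0$, which is \eqref{2.8}. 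Then $E(u)=\tfrac12\|\nabla u(t)\|_{2}^{2}-\tfrac14\|u(t)\|_{L^{V}}^{4}$ gives $\lim_{t\to\pm\infty}\|\nabla u(t)\|_{2}^{2}=2E(u)$, and inserting this into \eqref{eta} and using $\|\nabla Q\|_{2}^{2}=6E(Q)$ from \eqref{2.3} and $\|Q\|_{2}^{2}=M(Q)$ gives $\lim_{t\to\pm\infty}\eta(t)^{2}=\tfrac13 E(u)M(u)/(E(Q)M(Q))$, i.e.\ \eqref{2.9}. The main obstacle I expect is the construction of the critical element: one needs a linear/nonlinear profile decomposition and a robust perturbation theory that handle the nonlocal convolution nonlinearity, and one must verify precompactness of the critical orbit; the rigidity step is comparatively routine once the reduction $P(u)=0$ of Remark \ref{p} is in force.
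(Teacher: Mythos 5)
Your outline is correct and follows exactly the route the paper itself relies on: the paper omits the proof of Theorem \ref{t22} and refers to the Kenig--Merle type concentration--compactness/rigidity argument of Gao and Wu \cite{gao} for the $\dot H^{1/2}$-critical Hartree equation, which is precisely your scheme (critical threshold in $M(u)E(u)$, profile decomposition and perturbation theory to extract a compact critical element with $P=0$, localized virial rigidity using $\eta(t)\le\lambda_-<1$), and your virial identity $8\|\nabla u\|_2^2-6\|u\|_{L^V}^4$ and the deduction of \eqref{2.8}--\eqref{2.9} from \eqref{2.7} match the paper's identities \eqref{2.2}--\eqref{2.3}. So the proposal is essentially the same approach as the (cited) proof, with only routine details left to fill in.
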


\begin{lemma}\label{wave operator}
(Existence of wave operators) Suppose that $ \phi^+\in H^1$ and
\begin{equation}\label{4.11}
\frac{1}{2}\|\phi^+\|_2^2\|\nabla\phi^+\|_2^2<E(Q)M(Q).
\end{equation}
Then there exists $v_0\in H^1$ such that the corresponding solution
$v$ to \eqref{1.1} exists globally and satisfies
$$\|\nabla v(t)\|_2\|v_0\|_2\leq\|\nabla Q\|_2\|Q\|_2,\ \
M(v)=\|\phi^+\|_2^2,\ \  E(v)=\frac{1}{2}\|\nabla\phi^+\|_2^2,$$
and
$$\lim_{t\rightarrow+\infty}\|v(t)-e^{it\Delta}\phi^+\|_{H^1}=0.$$
Moreover, if $\|e^{it\Delta}\phi^+\|_{S(\dot{H}^{\frac{1}{2}})}\leq\delta_{sd},$ then
$$\|v\|_{S(\dot{H}^{\frac{1}{2}})}\leq 2\|e^{it\Delta}\phi^+\|_{S(\dot{H}^{\frac{1}{2}})},\ \
         \|D^{\frac{1}{2}}v\|_{S(L^2)}\leq 2c\|\phi^+\|_{\dot{H}^{\frac{1}{2}}}.$$
\end{lemma}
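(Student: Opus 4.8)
The plan is to solve the \emph{final‑state (wave–operator) problem}: first construct $v$ on a half‑line $[T,+\infty)$ with the prescribed asymptotics $v(t)\sim e^{it\Delta}\phi^{+}$, then evolve $v$ backwards on $[0,T]$ by the $H^{1}$ local theory and set $v_{0}=v(0)$.

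\textbf{Step 1 (construction on $[T,+\infty)$).} For $T>0$ large, seek a solution of
\[
v(t)=e^{it\Delta}\phi^{+}-i\int_{t}^{+\infty}e^{i(t-s)\Delta}\big(V*|v|^{2}\big)v(s)\,ds
\]
by a contraction argument in a ball of the space of functions with finite $\|v\|_{S(\dot{H}^{1/2};[T,\infty))}+\|D^{1/2}v\|_{S(L^{2};[T,\infty))}+\|\nabla v\|_{S(L^{2};[T,\infty))}$. Combining the Kato inhomogeneous estimate \eqref{inhomo} (and the $L^{2}$ Strichartz bounds at the $\nabla$ level) with Hardy–Littlewood–Sobolev and fractional Leibniz estimates for the nonlocal cubic term $(V*|v|^{2})v$, one checks the map is a contraction as soon as $\|e^{it\Delta}\phi^{+}\|_{S(\dot{H}^{1/2};[T,\infty))}$ is small; this holds for $T$ large since $\|e^{it\Delta}\phi^{+}\|_{S(\dot{H}^{1/2};\mathbb R)}<\infty$ by Strichartz. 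The fixed point gives $v\in C([T,\infty);H^{1})$ with $\|v-e^{it\Delta}\phi^{+}\|_{S(\dot{H}^{1/2};[t,\infty))}\to0$ and, propagating one derivative through the Duhamel term, $\|\nabla(v(t)-e^{it\Delta}\phi^{+})\|_{2}\to0$; since $e^{it\Delta}$ is unitary on $H^{1}$ this yields $\|v(t)-e^{it\Delta}\phi^{+}\|_{H^{1}}\to0$ as $t\to+\infty$.

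\textbf{Step 2 (backward evolution and conserved quantities).} Solving \eqref{1.1} backwards from $t=T$ on the compact interval $[0,T]$ keeps $v\in C([0,T];H^{1})$; set $v_{0}=v(0)$, so $v$ is defined on $[0,\infty)$. Mass conservation gives $M(v)=\lim_{t\to+\infty}\|v(t)\|_{2}^{2}=\|\phi^{+}\|_{2}^{2}$. Next $\|v(t)\|_{L^{V}}\to0$: by Hardy–Littlewood–Sobolev $\|w\|_{L^{V}}^{4}\le c\|w\|_{L^{20/7}}^{4}$ with $2<\tfrac{20}{7}<\tfrac{10}{3}$, and $\|e^{it\Delta}\phi^{+}\|_{L^{20/7}}\to0$ by the dispersive decay estimate together with density of Schwartz functions in $H^{1}$, while $\|v(t)-e^{it\Delta}\phi^{+}\|_{L^{20/7}}\le c\|v(t)-e^{it\Delta}\phi^{+}\|_{H^{1}}\to0$ by Sobolev embedding. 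Hence energy conservation gives $E(v)=\lim_{t\to+\infty}\big(\tfrac12\|\nabla v(t)\|_{2}^{2}-\tfrac14\|v(t)\|_{L^{V}}^{4}\big)=\tfrac12\|\nabla\phi^{+}\|_{2}^{2}$.

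\textbf{Step 3 (globality, the bound, small‑data case).} Since $E(Q)M(Q)=\tfrac16\|\nabla Q\|_{2}^{2}\|Q\|_{2}^{2}$ by \eqref{2.3}, the hypothesis \eqref{4.11} gives $M(v)E(v)<M(Q)E(Q)$ and $\|\nabla\phi^{+}\|_{2}\|\phi^{+}\|_{2}<\|\nabla Q\|_{2}\|Q\|_{2}$; by Step 1 there is $t_{0}$ large with $\|\nabla v(t_{0})\|_{2}\|v(t_{0})\|_{2}<\|\nabla Q\|_{2}\|Q\|_{2}$. Applying Proposition \ref{p21'} to the time‑translated solution $v(\cdot+t_{0})$ (whose data $v(t_{0})$ satisfies \eqref{2.2'}) shows $v$ is global and $\|\nabla v(t)\|_{2}\|v(t)\|_{2}<\|\nabla Q\|_{2}\|Q\|_{2}$ for every $t$; as $\|v(t)\|_{2}=\|v_{0}\|_{2}$, this is precisely the asserted bound. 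Finally, if $\|e^{it\Delta}\phi^{+}\|_{S(\dot{H}^{1/2})}\le\delta_{sd}$, the contraction of Step 1 can be run directly on $[0,\infty)$ (i.e.\ with $T=0$), producing $\|v\|_{S(\dot{H}^{1/2})}\le2\|e^{it\Delta}\phi^{+}\|_{S(\dot{H}^{1/2})}$ and $\|D^{1/2}v\|_{S(L^{2})}\le2c\|\phi^{+}\|_{\dot{H}^{1/2}}$; alternatively one feeds $v(T)$ into Lemma \ref{sd}.

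\textbf{Main obstacle.} The crux is Step 1: choosing a fixed‑point space that simultaneously controls the $\dot{H}^{1/2}$ scattering norm, the $L^{2}$ norm of $D^{1/2}v$, and the full $\dot{H}^{1}$ norm, and verifying that the nonlinear estimate for the \emph{nonlocal} term $(V*|v|^{2})v$ closes with a constant that is small on tails $[T,\infty)$ — this is where the Hardy–Littlewood–Sobolev/fractional‑Leibniz bookkeeping and the admissible‑exponent arithmetic specific to the $5$D Hartree nonlinearity must be carried out carefully. Propagating the top derivative (needed for $H^{1}$, not merely $\dot{H}^{1/2}$, convergence) is the most delicate part. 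Once this is in hand, the remaining steps are just conservation laws plus direct appeals to Proposition \ref{p21'} and Lemma \ref{sd}.
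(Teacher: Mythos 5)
Your proposal is correct and follows essentially the standard wave-operator construction that the paper itself omits and defers to \cite{gao}: a contraction for the final-state Duhamel equation on a tail $[T,\infty)$ at the $\dot H^{1/2}$ and $H^1$ levels, then conservation laws plus the vanishing of $\|v(t)\|_{L^V}$ to identify $M(v)$ and $E(v)$, and finally Proposition \ref{p21'} and Lemma \ref{sd} for globality, the gradient bound, and the small-data estimates. Only a presentational point: global backward existence on $[0,T]$ is not automatic from the local theory as your Step 2 suggests, so one should first invoke Proposition \ref{p21'} at a large time $t_0$ (which yields $I=(-\infty,+\infty)$) and only then define $v_0=v(0)$ — the ingredient is already in your Step 3, so this is a matter of ordering, not a gap.
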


\section{Virial Identity and Blow-Up Conditions}

From now on  we will focus on the second case of
Proposition~\ref{p21}. Using the classical virial identity we first
derive the upper bound of the finite blow-up time under the finite
variance hypothesis.

\begin{proposition}\label{p31}
Suppose that $\|xu_0\|_2<+\infty$. Let~$M(u)=M(Q),$~~$E(u)<E(Q)$~and
suppose that the second case of Proposition~\ref{p21}~holds
(~$\lambda>1$~ is defined in~ \eqref{e}). Let $r(t)$  be the scaled
variance given by
$$r(t)=\frac{\|xu\|_{2}^{2}}{48\lambda^{2}(\lambda-1)E(Q)}.$$
 Then blow-up occurs in forward time before~$t_{b},$~where
 $t_{b}=r'(0)+\sqrt{r'(0)^{2}+2r(0)}$.
\end{proposition}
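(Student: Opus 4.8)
The plan is to use the classical virial identity for $\int |x|^2|u|^2\,dx$, which for the Hartree nonlinearity $(V\ast|u|^2)u$ with $V(x)=|x|^{-3}$ takes the form
\begin{equation*}
\frac{d^2}{dt^2}\int |x|^2|u(x,t)|^2\,dx = 8\int|\nabla u|^2\,dx - 6\|u\|_{L^V}^4,
\end{equation*}
the coefficient $6$ arising from the homogeneity degree $-3$ of $V$ in $\mathbb{R}^5$ (equivalently, from the Pohozaev scaling that produced $\|Q\|_{L^V}^4=\tfrac43\|\nabla Q\|_2^2$). First I would rewrite the right-hand side in terms of the energy: using $\|u\|_{L^V}^4 = -4E(u) + 2\|\nabla u\|_2^2$, we get $8\|\nabla u\|_2^2 - 6\|u\|_{L^V}^4 = 8\|\nabla u\|_2^2 - 6(2\|\nabla u\|_2^2 - 4E(u)) = 24E(u) - 4\|\nabla u\|_2^2$. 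So with $y(t):=\int|x|^2|u|^2\,dx$ we have $y''(t) = 24E(u) - 4\|\nabla u(t)\|_2^2 \le 24E(u) - 4\|\nabla u(t)\|_2^2$, and the whole point is to bound $\|\nabla u(t)\|_2^2$ from below using the second case of Proposition~\ref{p21}.

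The next step is the coercivity input. Since $M(u)=M(Q)$ and the solution is of the second type, Proposition~\ref{p21} gives $\eta(t)\ge\lambda>1$ for all $t$, i.e.\ $\|\nabla u(t)\|_2^2 \ge \lambda^2 \|\nabla Q\|_2^2 \|Q\|_2^2 / M(u) = \lambda^2\|\nabla Q\|_2^2$ (using $M(u)=M(Q)=\|Q\|_2^2$). Combining with $E(u)<E(Q)=\tfrac16\|\nabla Q\|_2^2$ (from \eqref{2.3}), I would estimate
\begin{equation*}
y''(t) \le 24E(u) - 4\lambda^2\|\nabla Q\|_2^2 < 24\cdot\tfrac16\|\nabla Q\|_2^2 - 4\lambda^2\|\nabla Q\|_2^2 = 4(1-\lambda^2)\|\nabla Q\|_2^2 < 0.
\end{equation*}
To get the sharper constant matching the statement, one should instead keep $E(u)$ and write $y''(t)\le 24E(u)-4\lambda^2\|\nabla Q\|_2^2$; since in the $\lambda>1$ branch the relation \eqref{e} reads $E(u)M(u)/E(Q)M(Q)=3\lambda^2-2\lambda^3$, we have $E(u)=(3\lambda^2-2\lambda^3)E(Q)=\tfrac16(3\lambda^2-2\lambda^3)\|\nabla Q\|_2^2$, so $y''(t)\le 4(3\lambda^2-2\lambda^3)\|\nabla Q\|_2^2 - 4\lambda^2\|\nabla Q\|_2^2 = (8\lambda^2 - 8\lambda^3)\|\nabla Q\|_2^2 = -8\lambda^2(\lambda-1)\|\nabla Q\|_2^2 = -48\lambda^2(\lambda-1)E(Q)$. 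In terms of the rescaled variance $r(t)=y(t)/(48\lambda^2(\lambda-1)E(Q))$ this is precisely $r''(t)\le -1$.

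It remains to run the standard convexity/ODE argument: $r\ge 0$, $r''\le -1$, so $r(t)\le r(0)+r'(0)t-\tfrac12 t^2$, and the right-hand side hits $0$ at $t_b = r'(0)+\sqrt{r'(0)^2+2r(0)}$ (the positive root of $\tfrac12 t^2 - r'(0)t - r(0)=0$). Hence $\int|x|^2|u(t)|^2\,dx$ cannot stay positive beyond $t_b$, which forces the maximal time of existence $T_+$ to satisfy $T_+\le t_b$; indeed if the solution existed on $[0,t_b]$ then $r$ would be nonnegative there, contradicting $r(t_b)\le 0$ unless blow-up already occurred. The main obstacle — really the only nonroutine point — is verifying the exact form of the virial identity for this nonlocal nonlinearity, i.e.\ justifying the differentiation under the integral sign and the computation of the term coming from $(V\ast|u|^2)u$; this is where the degree-$(-3)$ homogeneity of $V$ and an integration by parts in the double integral enter, and it should be stated carefully (or invoked from the local theory with $\|xu_0\|_2<\infty$ guaranteeing $\|xu(t)\|_2<\infty$ on the existence interval). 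Everything after that is the textbook concavity argument.
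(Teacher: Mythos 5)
Your proposal is correct and follows essentially the same route as the paper: the virial identity written as $y''=24E(u)-4\|\nabla u\|_2^2$, the lower bound $\|\nabla u(t)\|_2^2\ge\lambda^2\|\nabla Q\|_2^2$ from the second case of Proposition \ref{p21} with $M(u)=M(Q)$, the exact relation $E(u)=(3\lambda^2-2\lambda^3)E(Q)$ from \eqref{e} to obtain $r''\le-1$, and the standard concavity argument giving blow-up before $t_b$. The only differences are cosmetic (you start from the form $8\|\nabla u\|_2^2-6\|u\|_{L^V}^4$ and convert, and you flag the justification of the virial identity, which the paper likewise takes for granted).
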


\begin{proof}
The virial identity gives
$$r''(t)=\frac{24E(u)-4\|\nabla u\|_{2}^{2}
}{48\lambda^{2}(\lambda-1)E(Q)}.$$

Using ~\eqref{2.2} we obtain
$$r''(t)=\frac{1
}{2\lambda^{2}(\lambda-1)}\left(\frac{E(u)}{E(Q)}-\frac{\|\nabla u\|_{2}^{2}}{\|\nabla Q\|_{2}^{2}}\right).$$
By the definition of~$\lambda$~and~$\eta,$~
$$r''(t)=\frac{3\lambda^{2}-2\lambda^{3}-\eta(t)^{2}}{2\lambda^{2}(\lambda-1)}.$$
Since~$\eta(t)\geq \lambda>1,$~we have
$$r''(t)\leq-1,$$
which by integrating in time twice gives
$$r(t)\leq-\frac{1}{2}t^{2}+r'(0)t+r(0).$$
Note that $t_b$ is the positive root of the polynomial on the right
hand side, which deduces that $r(t)\leq t_b$.

\end{proof}

The next result is related to the local virial identity. Let ~$\varphi\in C_{c}^{\infty}(\mathbb{R}^{N})$~be radial such that
$\varphi^{''}\leq 2 $ and
\[
\varphi(x)=
\begin{cases}
|x|^{2}, &|x|\le 1,\\
0,  & |x|\geq 2.
\end{cases}
\]
For ~$R>0$~define
\begin{equation}\label{3.1}
z_{R}(t)=\int R^{2}\phi(\frac{x}{R})|u(x,t)|^{2} dx.
\end{equation}
Then by direct calculations we obtain the following local virial
identity:
\begin{align}\label{local virial}
z''_R(t)=&4\sum_{j,k}\int\partial_j\partial_k\varphi\left(\frac{x}{R}\right)\partial_j\bar{u}\partial_kudx
-\frac{1}{R^2}\int\Delta^2\varphi\left(\frac{x}{R}\right)|u|^2dx\\ \nonumber
&+R\int\int\left(\nabla\varphi\left(\frac{x}{R}\right)-\nabla\varphi\left(\frac{y}{R}\right)\right)\nabla V(x-y)|u(x)|^2|u(y)|^2dxdy.
\end{align}
Set
\begin{align*}
I
=3\sum_{j}\int\int\left[(2x_j-R\partial_j\varphi\left(\frac{x}{R}\right))-(
2y_j-R\partial_j\varphi\left(\frac{y}{R}\right))\right]\frac{x_j-y_j}{|x-y|^5}
|u(x)|^2|u(y)|^2dxdy,
\end{align*}
and by the definition of $\varphi,$ we have
\begin{align*}
z''_R(t)=24E(u)-4\|\nabla u\|_2^2+A_R(u(t)),
\end{align*}
where
\begin{align*}
A_R(u(t))=&4\sum_{j\neq k}\int_{|x|>R}\partial_j\partial_k\varphi\left(\frac{x}{R}\right)\partial_j\bar{u}\partial_kudx
+4\sum_{j}\int_{|x|\leq R}[\partial_j^2\varphi\left(\frac{x}{R}\right)-2]|\nabla u|^2dx\\
&-\frac{1}{R^2}\int_{|x|>R}\Delta^2\varphi\left(\frac{x}{R}\right)|u|^2dx+I.
\end{align*}
Observe that $I$ vanishes
in the region $|x|,|y|\leq R$,  while in the region $|x|,|y|\geq R,$
$I$ becomes $6\int_{|x|\geq 2R}\int_{|y|\geq 2R}V(x-y)|u(x)|^2|u(y)|^2dydx.$ In other cases, since the integral is symmetric with
respect to $x$ and $y$, $I$ is bounded by
\begin{align*}
6\sum_{j}\int\int_{|x|\geq R}\left[(2x_j-R\partial_j\varphi\left(\frac{x}{R}\right))-(
2y_j-R\partial_j\varphi\left(\frac{y}{R}\right))\right]\frac{x_j-y_j}{|x-y|^5}
|u(x)|^2|u(y)|^2dxdy,
\end{align*}
which is bounded by $ c\int\int_{|x|\geq
R}\frac{|u(x)|^2|u(y)|^2}{|x-y|^3}dxdy. $ Thus, for a suitable
radial function $\varphi$ such that $\varphi^{''}\leq 2 $, we  have
the following control
\begin{align}\label{3.3}
A_R(u(t))
\leq c\left(\frac{1}{R^2}\|u\|^2_{L^2(|x|>R)}+\|u\|^4_{L^V(|x|>R)}\right).
\end{align}

The local virial identity will give another version of
Proposition~\ref{p31}, for which,  without the assumption of finite
variance, we will assumes that the solution is suitably localized in
~$H^{1}$~for all times.
\begin{proposition}\label{p32}
Let~$M(u)=M(Q),$~~$E(u)<E(Q)$~and suppose that the second case of Proposition~\ref{p21}~holds
( $\lambda>1$~ is defined in~ \eqref{e}). Select $\gamma$ such that
$0<\gamma<min\left\{\lambda-1,
1\right\}.$
 Suppose that there is a radius~$R\geq\sqrt{\frac{c}{6\gamma}}$~such that for all ~$t,$~
there holds that
\begin{align}\label{vlocal}
\|u\|_{L^V(|x|\geq R)}^{4}<\frac{6\gamma E(Q)}{c},
\end{align}
where the absolute constant  $c$ is determined in \eqref{3.3}.
Let $\tilde{r}(t)$ be the scaled local variance given by
$$\tilde{r}(t)=
\frac{z_{R}(t)}{ 48\lambda^{2}(\lambda-1-\gamma)E(Q)}.$$
Then blow-up occurs in forward time before~$t_{b},$~where
 $t_{b}=\tilde{r}'(0)+\sqrt{\tilde{r}'(0)^{2}+2\tilde{r}(0)}.$

\end{proposition}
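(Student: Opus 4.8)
The plan is to rerun the argument of Proposition~\ref{p31} with the finite-variance quantity $\|xu\|_2^2$ replaced by the truncated variance $z_R(t)$. The advantage of the truncation is that $z_R$ is always finite and $C^2$ in time, because $\varphi$ has compact support: $0\le z_R(t)\le R^2\|\varphi\|_\infty M(u)$ for every $t$ in the lifespan, and its second derivative is given by the local virial identity \eqref{local virial}, which we recorded as $z_R''(t)=24E(u)-4\|\nabla u\|_2^2+A_R(u(t))$ together with the pointwise control \eqref{3.3} on $A_R$. So, in contrast with Proposition~\ref{p31}, no hypothesis on $xu_0$ is needed.

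First I would handle the main term $24E(u)-4\|\nabla u\|_2^2$ exactly as in the proof of Proposition~\ref{p31}. Using $M(u)=M(Q)$ (hence $\|u\|_2=\|Q\|_2$), the identities $\|\nabla Q\|_2^2=6E(Q)$ from \eqref{2.3} and $\|\nabla u\|_2^2=\eta(t)^2\|\nabla Q\|_2^2$ from \eqref{eta}, and the defining relation \eqref{e} of $\lambda$ — which gives $E(u)/E(Q)=M(u)E(u)/M(Q)E(Q)=3\lambda^2-2\lambda^3$ — one rewrites this term as $24E(Q)\bigl(3\lambda^2-2\lambda^3-\eta(t)^2\bigr)$. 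Since we are in the second case of Proposition~\ref{p21}, $\eta(t)\ge\lambda>1$ for all $t$, so this is at most $24E(Q)(2\lambda^2-2\lambda^3)=-48\lambda^2(\lambda-1)E(Q)$.

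The new ingredient, and the only real obstacle, is to absorb the error $A_R(u(t))$ uniformly in $t$. Here I would combine \eqref{3.3}, the trivial bound $\|u\|_{L^2(|x|>R)}^2\le M(u)=M(Q)$, and the normalization $M(Q)=2E(Q)$ (a consequence of the two Pohozhaev identities recalled in Section~2). The choice $R\ge\sqrt{c/(6\gamma)}$ then gives $\frac{c}{R^2}\|u\|_{L^2(|x|>R)}^2\le12\gamma E(Q)$, while the time-uniform smallness hypothesis \eqref{vlocal} gives $c\|u\|_{L^V(|x|>R)}^4<6\gamma E(Q)$; hence $A_R(u(t))<18\gamma E(Q)\le48\lambda^2\gamma E(Q)$, using $\lambda>1$. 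Combining with the previous paragraph, $z_R''(t)\le-48\lambda^2(\lambda-1-\gamma)E(Q)$ throughout the lifespan, i.e. $\tilde r''(t)\le-1$ (this is meaningful since $\gamma<\lambda-1$).

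Finally, integrating $\tilde r''\le-1$ twice from $t=0$ gives $\tilde r(t)\le-\frac12t^2+\tilde r'(0)t+\tilde r(0)$; since $\varphi\ge0$ makes $\tilde r(t)\ge0$, the solution cannot survive past the positive root $t_b=\tilde r'(0)+\sqrt{\tilde r'(0)^2+2\tilde r(0)}$ of the quadratic on the right, which is the asserted bound. I expect the delicate point to be the uniform-in-$t$ control of $A_R$: it is exactly why the hypothesis is stated as a smallness of $\|u\|_{L^V(|x|\ge R)}$ valid for \emph{all} $t$ rather than something that might degrade as $t\uparrow T_+$, and why one must retain the precise constant $c$ from \eqref{3.3} and the identity $M(Q)=2E(Q)$ so that the loss $\gamma$ is genuinely swallowed by $\lambda-1$.
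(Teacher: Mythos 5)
Your proof is correct and follows essentially the same route as the paper: rewrite the local virial identity as $z_R''=24E(Q)\bigl(3\lambda^2-2\lambda^3-\eta(t)^2\bigr)+A_R(u(t))$, absorb $A_R$ uniformly in $t$ via \eqref{3.3}, the choice $R\geq\sqrt{c/(6\gamma)}$, the hypothesis \eqref{vlocal} and $M(Q)=2E(Q)$, and conclude $\tilde r''\leq-1$ (which you state correctly; the paper's displayed ``$\leq 1$'' is an evident sign typo) before integrating twice as in Proposition~\ref{p31}. The only cosmetic difference is that you bound $A_R$ by $48\lambda^2\gamma E(Q)$ directly, whereas the paper keeps the intermediate bound $A_R\leq 24\gamma E(Q)\eta(t)^2$; both rest on $\eta(t)\geq\lambda>1$ and $\gamma<\lambda-1$.
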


\begin{proof}
In view of the assumptions,
by the local virial identity and the same steps as in the proof of Proposition~\ref{p31}~
\begin{align*}
\tilde{r}''(t)&=\frac{1}{48\lambda^{2}(\lambda-1-\gamma)E(Q)}
\left(24E(u)-4\|\nabla u\|_2^2+A_R(u(t))\right)\\
&=\frac{1}{2\lambda^{2}(\lambda-1-\gamma) }\left(3\lambda^{2}-2\lambda^{3}-\eta(t)^2
+\frac{A_R(u(t))}{24E(Q)}\right)\\
&\leq\frac{3\lambda^{2}-2\lambda^{3}-\eta(t)^2}{2\lambda^{2}(\lambda-1-\gamma) }
+\frac{\frac{c}{R^2}\|u\|^2_{L^2(|x|>R)}}{48E(Q)\lambda^{2}(\lambda-1-\gamma)}
+\frac{c\|u\|^4_{L^V(|x|>R)}}{48E(Q)\lambda^{2}(\lambda-1-\gamma)}\\
&\leq\frac{1}{2\lambda^{2}(\lambda-1-\gamma) }\left(3\lambda^{2}-2\lambda^{3}-\eta(t)^2+\gamma\eta(t)^2\right)\\
&\leq1.
\end{align*}
Finally, we complete our proof just the same as in the proof
of~Proposition~\ref{p31}~.

\end{proof}

\begin{remark}\label{Hlocal}
Note that by Hardy-Littlewood-Sobolev inequalities, H\"older
estimates and Sobolev embedding,
 the assumption \eqref{vlocal} is satisfied by $u$ which is $ H^1$ bounded and $ H^1$ localized, i.e.  for any $\epsilon>0$
there exists $R>0$ large enough such that
$\|u\|_{H^1(|x|\geq R)}\leq \epsilon$.
\end{remark}

We  will finally give a quantified proof of finite-time blow-up for radial solutions, for which we need the following
radial Sobolev embedding:
Let $u\in H^1(\mathbb R^d)$ be radially symmetric, then
\begin{align}\label{rs}
\||x|^{\frac{d-1}{2}}u\|^2_\infty\leq c\|u\|_2\|\nabla u\|_2.
\end{align}
\begin{proposition}\label{p33}
Let~$M(u)=M(Q),$~~$E(u)<E(Q)$.  Suppose $u$ is radial and the second case of Proposition~\ref{p21}~holds
( $\lambda>1$~ is defined in~ \eqref{e}). Select $\gamma$ such that
$0<\gamma<min\left\{\lambda-1,
1\right\}.$
 Suppose that
 $R\geq\max\left\{\sqrt{\frac{c}{6\gamma}},\left(\frac{cE(Q)}{12\gamma}\right)^{\frac{5}{4}}\right\}$ ,
where the absolute constant  $c$  is determined by the two in \eqref{3.3} and \eqref{rs}.
 Let $\tilde{r}(t)$  be the scaled local variance given by
$$\tilde{r}(t)=
\frac{z_{R}(t)}{ 48\lambda^{2}(\lambda-1-\gamma)E(Q)}.$$
Then blow-up occurs in forward time before~$t_{b},$~where
  $t_{b}=\tilde{r}'(0)+\sqrt{\tilde{r}'(0)^{2}+2\tilde{r}(0)}.$

\end{proposition}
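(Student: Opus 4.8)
The plan is to run the localized virial argument of Proposition~\ref{p32} without assuming any a priori $H^{1}$-localization; instead the radial symmetry of $u$, together with the radial Sobolev embedding \eqref{rs} (in dimension $d=5$), will be used to control the only term in the bound \eqref{3.3} for $A_R(u(t))$ that is not manifestly small, namely the nonlocal tail $\|u\|_{L^V(|x|\geq R)}^4$. In the second case of Proposition~\ref{p21} there is no uniform bound on $\|\nabla u(t)\|_2$, so this tail cannot be bounded by a fixed constant; the point is instead to bound it by a small multiple of $\eta(t)^2$ plus terms that tend to $0$ as $R\to\infty$, which is exactly what the identity \eqref{local virial} and the subsequent ODE comparison can tolerate, thanks to the negative term $-4\|\nabla u\|_2^2$ in $z_R''$.

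First I would record the pointwise decay coming from \eqref{rs} with $d=5$: since $u(t,\cdot)$ is radial,
$$|u(x,t)|^2\leq \frac{c\,\|u\|_2\,\|\nabla u\|_2}{|x|^4},\qquad x\neq 0.$$
Feeding this into $\|u\|_{L^V(|x|\geq R)}^4=\int_{|x|\geq R}\int \frac{|u(x)|^2|u(y)|^2}{|x-y|^3}\,dy\,dx$, I would first estimate the inner integral by splitting $\mathbb R^5$ into $\{|y|\leq |x|/2\}$, $\{|x|/2<|y|<2|x|\}$ and $\{|y|\geq 2|x|\}$: on the near region use $|x-y|\geq |x|/2$ and $\int|u|^2=\|u\|_2^2$; on the annulus use the decay of $|u(y)|^2$ together with $\int_{|z|\leq 3|x|}|z|^{-3}\,dz\leq c|x|^2$; on the far region use $|x-y|\geq |y|/2$. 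This yields $\int \frac{|u(y)|^2}{|x-y|^3}\,dy\leq c\big(\|u\|_2^2+\|u\|_2\|\nabla u\|_2\,|x|\big)|x|^{-3}$. Multiplying by $|u(x)|^2$ and integrating over $|x|\geq R$, again using the decay and $M(u)=M(Q)$, one obtains a bound of the shape
$$\|u\|_{L^V(|x|\geq R)}^4\leq \frac{c}{R^{\alpha}}\,\|u\|_2^2\,\|\nabla u\|_2^2\;+\;(\text{terms of lower order in }\|\nabla u\|_2),$$
for some $\alpha>0$ (the crude splitting already gives $\alpha=1$, more than enough). The structural feature that matters is that $\|\nabla u\|_2$ enters to power at most $2$, and to power exactly $2$ only with a coefficient $O(R^{-\alpha})$; the precise value of $\alpha$ produced by the chain of H\"older/radial-Sobolev estimates is what fixes the second radius threshold $R\geq(cE(Q)/12\gamma)^{5/4}$ in the statement.

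With this estimate I would rerun the computation in the proof of Proposition~\ref{p32}. Using $M(u)=M(Q)$, the identities $\|Q\|_2^2=2E(Q)$ and $\|\nabla Q\|_2^2=6E(Q)$ derived from \eqref{gs}, and $E(u)=(3\lambda^2-2\lambda^3)E(Q)$, $\|\nabla u\|_2^2=6E(Q)\,\eta(t)^2$, the leading term $\tfrac{c}{24E(Q)}\cdot\tfrac{c}{R^{\alpha}}\|u\|_2^2\|\nabla u\|_2^2$ becomes a constant multiple of $R^{-\alpha}E(Q)\,\eta(t)^2$, which is $\leq\tfrac{\gamma}{2}\eta(t)^2$ once $R$ meets the stated lower bound; the $\|\nabla u\|_2$-linear remainders, after Young's inequality $R^{-\beta}\|\nabla u\|_2\leq\delta\|\nabla u\|_2^2+C_\delta R^{-2\beta}$, contribute a further $\leq\tfrac{\gamma}{4}\eta(t)^2$ plus harmless negative powers of $R$; and the $\|u\|_2$-only remainders together with $\tfrac{c}{R^2}\|u\|_{L^2(|x|>R)}^2\leq \tfrac{2cE(Q)}{R^2}$ are absorbed exactly as in Proposition~\ref{p32} using $R\geq\sqrt{c/6\gamma}$. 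Altogether $A_R(u(t))/(24E(Q))\leq\gamma\,\eta(t)^2$ for all $t$, and then, as in the proof of Proposition~\ref{p32},
$$\tilde r''(t)=\frac{3\lambda^2-2\lambda^3-\eta(t)^2+\frac{A_R(u(t))}{24E(Q)}}{2\lambda^2(\lambda-1-\gamma)}\leq\frac{3\lambda^2-2\lambda^3-(1-\gamma)\eta(t)^2}{2\lambda^2(\lambda-1-\gamma)}\leq-1,$$
using $\eta(t)\geq\lambda$ and $0<\gamma<\min\{\lambda-1,1\}$. Integrating twice gives $\tilde r(t)\leq -\tfrac12 t^2+\tilde r'(0)t+\tilde r(0)$; since $\tilde r\geq 0$ on the maximal forward interval of existence and the right-hand side vanishes at $t_b=\tilde r'(0)+\sqrt{\tilde r'(0)^2+2\tilde r(0)}$ and is negative afterwards, the solution must cease to exist before $t_b$, i.e.\ it blows up in forward time before $t_b$.

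The main obstacle is the far-field estimate for the nonlocal term. For pure-power equations one simply writes $\int_{|x|\geq R}|u|^4\leq\|u\|_{L^\infty(|x|\geq R)}^2\|u\|_2^2$ and bounds the sup directly by \eqref{rs}; here the convolution couples the far region $|x|\geq R$ with all of $\mathbb R^5$, so one has to track carefully how the $|x|^{-4}$ pointwise decay of $|u|^2$ controls $(V\ast|u|^2)(x)$ for $|x|$ large. The delicate point is precisely to keep $\|\nabla u\|_2$ to power at most $2$, and with an $R$-small coefficient in the quadratic case: a worse power would be fatal, since in the second case of Proposition~\ref{p21} there is no a priori bound on $\|\nabla u(t)\|_2$ and the offending term could not then be absorbed into $-4\|\nabla u\|_2^2$.
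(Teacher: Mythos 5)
Your argument is correct, and it reaches the same structural conclusion as the paper (control of the exterior nonlocal term by $\gamma\,\eta(t)^2$ for $R$ large, then the convexity/ODE argument of Proposition~\ref{p32}), but the key tail estimate is executed by a different decomposition. The paper does not use the pointwise decay $|u(x)|^2\leq c\|u\|_2\|\nabla u\|_2|x|^{-4}$ at all: it first applies the Hardy--Littlewood--Sobolev inequality to factor $\|u\|_{L^V(|x|>R)}^4$ into a global and an exterior $L^{20/7}$ norm, and then invokes the exterior $L^p$ consequence of \eqref{rs}, namely $\|u\|^p_{L^p(|x|>R)}\leq cR^{-(2p-2)}\|u\|_2^{(p+2)/2}\|\nabla u\|_2^{(p-2)/2}$, arriving at $\|u\|_{L^V(|x|>R)}^4\leq cR^{-4/5}\|u\|_2^{11/5}\|\nabla u\|_2^{9/5}\leq cE(Q)^2R^{-4/5}\eta(t)^2$. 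Because the power of $\|\nabla u\|_2$ there is $9/5<2$, no Young-type splitting is needed (the step $\|u\|_2^{11/5}\|\nabla u\|_2^{9/5}\lesssim E(Q)^2\eta^2$ uses $\eta\geq\lambda>1$), and the exponent $R^{-4/5}$ is precisely what produces the stated threshold $R\geq(cE(Q)/12\gamma)^{5/4}$. Your route — pointwise radial decay of $|u|^2$ plus a near/annulus/far splitting of the convolution — is more elementary and in fact yields a better $R$-decay (coefficient $R^{-1}$ on the quadratic term, or only a linear power of $\|\nabla u\|_2$ if you keep $\int|u|^2$ in the outer integral), at the cost of an extra Young absorption and of constants/thresholds that do not literally reproduce $(cE(Q)/12\gamma)^{5/4}$; since $c$ is an unspecified absolute constant this is harmless, but it is worth being explicit that your lower-order remainders ($cR^{-3}\|u\|_2^4$, $C_\gamma R^{-4}\|u\|_2^6$, etc.) are absorbed using $\gamma\eta^2\geq\gamma$ with your own radius threshold, rather than ``exactly as in Proposition~\ref{p32}''. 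Finally, your concluding ODE step ($\tilde r''\leq-1$, hence $\tilde r(t)\leq-\tfrac12t^2+\tilde r'(0)t+\tilde r(0)$, contradicting $\tilde r\geq0$ beyond $t_b$) is the correct form of the argument; the paper's displayed ``$\leq1$'' at the analogous point is evidently a sign typo.
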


\begin{proof}
Again from the local virial identity,
\begin{align*}
\tilde{r}''(t)&=\frac{1}{48\lambda^{2}(\lambda-1-\gamma)E(Q)}
\left(24E(u)-4\|\nabla u\|_2^2+A_R(u(t))\right)\\
&\leq\frac{3\lambda^{2}-2\lambda^{3}-\eta(t)^2}{2\lambda^{2}(\lambda-1-\gamma) }
+\frac{\frac{c}{R^2}\|u\|^2_{L^2(|x|>R)}}{48E(Q)\lambda^{2}(\lambda-1-\gamma)}
+\frac{c\|u\|^4_{L^V(|x|>R)}}{48E(Q)\lambda^{2}(\lambda-1-\gamma)}.
\end{align*}
The  radial Sobolev embedding \eqref{rs}
implies that for any $p\geq 2,$
\begin{align*}
\|u\|^p_{L^p(|x|>R)}\leq \frac{c}{R^{2p-2}}\|u\|^{\frac{p+2}{2}}_{L^2(|x|>R)}\|\nabla u\|^{\frac{p-2}{2}}_{L^2(|x|>R)}.
\end{align*}
This, combined with Hardy-Littlewood-Sobolev inequalities and
H\"older estimates, implies that
\begin{align*}
\|u\|^4_{L^V(|x|>R)}\leq \|u\|^{2}_{L^{\frac{10}{7}}(\mathbb R^5)}\|u\|^{2}_{L^{\frac{10}{7}}(|x|>R)}
\leq
\frac{c}{R^{\frac{4}{5}}}\|u\|^{\frac{11}{5}}_{L^2(\mathbb R^5)}\|\nabla u\|^{\frac{9}{5}}_{L^2(\mathbb R^5)}
\leq\frac{cE(Q)^2}{R^{\frac{4}{5}}}\eta(t)^2.
\end{align*}
Thus in view of the assumptions, we have
\begin{align*}
\tilde{r}''(t)\leq
\frac{1}{2\lambda^{2}(\lambda-1-\gamma) }\left(3\lambda^{2}-2\lambda^{3}-\eta(t)^2+\gamma\eta(t)^2\right)\leq1.
\end{align*}
Arguing  the same as in the proof of the preceding propositions we
can complete our proof.
\end{proof}

\section{Variational Characterization of the Ground State}
In this section we deal with the variation characterization of~$Q$
defined in  section 2. It is an important preparation for the ``near
boundary case" in section~5. Since the time dependence plays no role
in this section, we will write $u=u(x)$ for now.

\begin{proposition}\label{p41}
There exists a function~$\epsilon(\rho)$~with~$\epsilon(\rho)\rightarrow 0$~as~$\rho\rightarrow 0$~
such that the following holds:
suppose there is~$\lambda>0$~satisfying
\begin{equation}\label{4.1}
\left|\frac{M(u)E(u)}{M(Q)E(Q)}
-\left(3\lambda^{2}-2\lambda^{3}\right)\right|
\leq \rho \lambda^{3},
\end{equation}
and
\begin{equation}\label{4.2}
\left|\frac{\|u\|_{2} \|\nabla u\|_{2}}{\|Q\|_{2} \|\nabla Q\|_{2}}
-\lambda\right|
\leq \rho
\begin{cases}\lambda, & \lambda\geq 1,\\
\lambda^{2}, & \lambda\leq 1.
\end{cases}
\end{equation}
Then there exists~$\theta\in\mathbb{R}$~and~$x_{0}\in\mathbb{R}^{5}$~such that
\begin{equation}\label{4.3}
\left\|u-e^{i\theta}\lambda^{\frac{5}{2}}\beta^{-2}Q\left(\lambda(\beta^{-1}\cdot-x_{0})
\right)
\right\|_{2}\leq\beta^{\frac{1}{2} }\epsilon(\rho)
\end{equation}
and
\begin{equation}\label{4.4}
\left\|\nabla\left[u-e^{i\theta}\lambda^{\frac{5}{2}}\beta^{-2}Q\left(\lambda(\beta^{-1}\cdot-x_{0})
\right)\right]
\right\|_{2}\leq\lambda\beta^{ -\frac{1}{2} }\epsilon(\rho),
\end{equation}
where~$\beta= \frac{M(u)}{M(Q)} .$~

\end{proposition}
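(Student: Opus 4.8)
\textbf{Proof proposal for Proposition~\ref{p41}.}

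The plan is to argue by contradiction and compactness, reducing everything to the variational characterization of $Q$ as the optimizer in the Hardy--Littlewood--Sobolev inequality \eqref{hls}. First I would rescale: given $u$ satisfying \eqref{4.1}--\eqref{4.2}, set $v(x)=\beta^{2}\lambda^{-5/2}u(\beta(x+\beta x_{0}))$, or more invariantly pass to the scaling-and-modulation orbit so that $v$ has $M(v)=M(Q)$ and $\|v\|_{2}\|\nabla v\|_{2}$ close to $\|Q\|_{2}\|\nabla Q\|_{2}$; the conclusions \eqref{4.3}--\eqref{4.4} are exactly the statement that, after undoing this rescaling, $u$ is $L^{2}$- and $\dot H^{1}$-close to the corresponding rescaled copy of $e^{i\theta}Q$. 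So it suffices to prove: if $v_{n}\in H^{1}$ with $M(v_{n})=M(Q)$, $\|v_{n}\|_{2}\|\nabla v_{n}\|_{2}\to\|Q\|_{2}\|\nabla Q\|_{2}$ and $M(v_{n})E(v_{n})\to M(Q)E(Q)$ (these are the $\rho\to0$ limits of \eqref{4.1}--\eqref{4.2} in the normalized variables, using that the only root of $3\lambda^{2}-2\lambda^{3}=M(Q)E(Q)/M(Q)E(Q)=1$ near the relevant regime forces $\lambda\to1$), then there exist $\theta_{n},x_{n}$ with $v_{n}-e^{i\theta_{n}}Q(\cdot-x_{n})\to0$ in $H^{1}$.

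The core of this is a standard concentration-compactness / rigidity step. From $\|v_{n}\|_{2}\|\nabla v_{n}\|_{2}\to\|Q\|_{2}\|\nabla Q\|_{2}$ and $M(v_{n})=M(Q)$ we get $\|\nabla v_{n}\|_{2}\to\|\nabla Q\|_{2}$, so $(v_{n})$ is bounded in $H^{1}$. Then $E(v_{n})\to E(Q)=\tfrac16\|\nabla Q\|_{2}^{2}$ forces $\|v_{n}\|_{L^{V}}^{4}\to\|Q\|_{L^{V}}^{4}=\tfrac43\|\nabla Q\|_{2}^{2}$, i.e.\ $(v_{n})$ is an $L^{2}$-normalized maximizing sequence for the HLS functional $\|w\|_{L^{V}}^{4}/(\|w\|_{2}\|\nabla w\|_{2}^{3})$, whose supremum $C_{HLS}$ is attained only at rescalings/translations/phase-rotations of $Q$ (the uniqueness invoked after \eqref{gs}). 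The profile-decomposition machinery the paper sets up later (section~6), or equivalently the classical argument of Lions, then yields: after translating by suitable $x_{n}$, $v_{n}(\cdot+x_{n})\rightharpoonup V$ weakly in $H^{1}$ with $V$ a nonzero maximizer, hence $V=e^{i\theta_{0}}\sigma^{5/2}Q(\sigma\cdot)$ for some $\sigma>0$; the normalizations $M(v_{n})=M(Q)$ and $\|\nabla v_{n}\|_{2}\to\|\nabla Q\|_{2}$ pin down $\sigma=1$ and force the convergence to be strong in $L^{2}$ and in $\dot H^{1}$, hence in $H^{1}$. Absorbing $\theta_{0}$ into $\theta_{n}$ gives $v_{n}-e^{i\theta_{n}}Q(\cdot-x_{n})\to0$ in $H^{1}$.

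Finally I would package this into the quantitative $\epsilon(\rho)$ statement: define
$$\epsilon(\rho)=\sup\Bigl\{\inf_{\theta,x}\bigl(\|v-e^{i\theta}Q(\cdot-x)\|_{2}+\|\nabla(v-e^{i\theta}Q(\cdot-x))\|_{2}\bigr)\ :\ v\text{ satisfies the normalized }\eqref{4.1}\text{--}\eqref{4.2}\Bigr\},$$
which is finite and, by the compactness argument above, tends to $0$ as $\rho\to0$ (any sequence $\rho_{n}\to0$ realizing the sup to within $1/n$ produces a sequence $v_{n}$ as in the previous paragraph, whose infimum therefore tends to $0$). Then translating the two $H^{1}$-pieces of the estimate back through the scaling $v\mapsto u$ — which multiplies the $L^{2}$ norm by $\beta^{1/2}$ and the $\dot H^{1}$ norm by $\lambda\beta^{-1/2}$ — produces exactly \eqref{4.3} and \eqref{4.4}, with $x_{0}$ the rescaled translation parameter. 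The main obstacle is the compactness/rigidity step: one must rule out both vanishing (excluded because $\|v_{n}\|_{L^{V}}$ stays bounded below, which by HLS requires a nontrivial profile) and dichotomy/splitting into multiple profiles (excluded because the limiting functional is strictly subadditive on masses, so a single profile must carry all the mass — exactly where the uniqueness of $Q$ and the identities \eqref{2.2}--\eqref{2.3} are used). The rest — the reduction by scaling and the bookkeeping of $\epsilon(\rho)$ — is routine, though one must be careful that the $\lambda\le1$ versus $\lambda\ge1$ split in \eqref{4.2} is exactly what is needed so that $\lambda\to1$ in both regimes; near $\lambda=1$ the function $3\lambda^{2}-2\lambda^{3}$ has a critical point, so \eqref{4.1} alone only controls $|\lambda-1|^{2}$, and it is \eqref{4.2} that restores control of $\|v_{n}\|_{2}\|\nabla v_{n}\|_{2}$ to first order.
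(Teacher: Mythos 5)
Your overall architecture is essentially the paper's: rescale off the mass and off the parameter $\lambda$, reduce to a stability statement for functions whose $L^{2}$, $\dot H^{1}$ and $L^{V}$ norms are close to those of $Q$, and rescale back (your bookkeeping of the factors $\beta^{1/2}$ and $\lambda\beta^{-1/2}$ is correct, though your displayed formula for $v$ omits the $\lambda^{-1}$ dilation in the spatial argument). The compactness core you sketch --- optimizing sequences for \eqref{hls} converge, up to translation and phase, to $Q$ --- is exactly the Lions-type Lemma~\ref{p44} that the paper simply quotes, so re-proving it by concentration-compactness is legitimate, and your sup/inf definition of $\epsilon(\rho)$ is a standard way to make the conclusion quantitative.

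There is, however, a genuine gap at the hinge of your reduction: you assert that in the normalized variables the hypotheses become $M(v_{n})E(v_{n})\to M(Q)E(Q)$, ``using that \dots forces $\lambda\to1$''. The hypotheses \eqref{4.1}--\eqref{4.2} do not force $\lambda\to1$: for every $\lambda>0$ the function $u=e^{i\theta}\lambda^{5/2}Q(\lambda\cdot)$ satisfies them with $\rho=0$, and in the application (Proposition~\ref{p51}) $\lambda\geq\lambda_{0}>1$ is fixed and may be far from $1$; the conclusion is closeness to the $\lambda$-rescaled soliton, not to $Q$ itself. Moreover the normalization $\tilde u(x)=\lambda^{-5/2}v(\lambda^{-1}x)$ is not a symmetry of the energy, so \eqref{4.1} does not transform directly into $E(\tilde u)\to E(Q)$. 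What your HLS-optimizing-sequence step actually needs is $\|\tilde u\|_{L^{V}}\to\|Q\|_{L^{V}}$, and this must be \emph{derived} by combining \eqref{4.1} and \eqref{4.2} through the identity $\|v\|_{L^{V}}^{4}=2\|\nabla v\|_{2}^{2}-4E(v)$ and the relations \eqref{2.2}--\eqref{2.3}, which yields $\bigl|\,\|v\|_{L^{V}}^{4}/\|Q\|_{L^{V}}^{4}-\lambda^{3}\bigr|\leq c\rho\lambda^{3}$ uniformly in $\lambda$; only then does $E(\tilde u)\to E(Q)$ follow, i.e.\ the implication runs in the opposite direction from the one you wrote. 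This is precisely the short computation the paper performs before invoking Lemma~\ref{p44}, and without it (uniformly in $\lambda$) your appeal to an optimizing sequence does not get started when $\lambda$ is away from $1$. Your closing remark that \eqref{4.1} alone only controls $|\lambda-1|^{2}$ near the critical point of $3\lambda^{2}-2\lambda^{3}$ reflects the same misreading: $\lambda$ is a given parameter, not an unknown to be solved for, and the $\lambda\geq1$ versus $\lambda\leq1$ split in \eqref{4.2} is there only so that the error bound survives the division by $\lambda$ in the rescaling. The gap is fixable by inserting the $L^{V}$ computation, but as written the reduction to the normalized compactness statement is unjustified.
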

\begin{remark}\label{r43}
If we let~$v(x)=\beta^2 u(\beta x),$~then~$M(v)=\beta^{-1}M(u)=M(Q),$~
and we can then restate Proposition~\ref{p41}~as follows:

Suppose ~$\|v\|_{2}=\|Q\|_{2}$~and there is~$\lambda>0$~such that

\begin{equation}\label{4.5}
\left|\frac {E(v)}{E(Q)}
-\left(3\lambda^{2}-2\lambda^{3}\right)\right|
\leq \rho \lambda^{3},
\end{equation}
and
\begin{equation}\label{4.6}
\left|\frac{\|\nabla v\|_{2}}{\|\nabla Q\|_{2}}
-\lambda\right|
\leq \rho
\begin{cases}\lambda, & \lambda\geq 1,\\
\lambda^{2}, & \lambda\leq 1.
\end{cases}
\end{equation}
Then there exists~$\theta\in\mathbb{R}$~and~$x_{0}\in\mathbb{R}^{5}$~such that
\begin{equation}\label{4.7}
\left\|v-e^{i\theta}\lambda^{\frac{5}{2}}Q\left(\lambda(\cdot-x_{0})
\right)
\right\|_{2}\leq \epsilon(\rho)
\end{equation}
and
\begin{equation}\label{4.8}
\left\|\nabla\left[v-e^{i\theta}\lambda^{\frac{5}{2}}Q\left(\lambda(\cdot-x_{0})
\right)\right]
\right\|_{2}\leq\lambda\epsilon(\rho).
\end{equation}

\end{remark}
Thus it suffices to prove the scaled statement equivalent to Proposition~\ref{p41}. We will carry it out
by means of the following
 result from Lions~\cite{lions}.
 \begin{lemma}\label{p44}
There exists a function~$\epsilon(\rho),$~defined for small~$\rho>0$
~such that\\~$\lim_{\rho\rightarrow 0}\epsilon(\rho)=0,$~such that for all ~$u\in H^{1}$~with
\begin{equation}\label{4.9}
\left|\|u\|_{L^V}-\|Q\|_{L^V}\right|+\left|\|u\|_{2}-\|Q\|_{2}\right|+\left|\|\nabla u\|_{2}-\|\nabla Q\|_{2}\right|
\leq \rho,
\end{equation}
there exist~$\theta_{0}\in\mathbb{R}$~and~$x_{0}\in\mathbb{R}^{N}$~such that
\begin{equation}\label{4.10}
\left\|u-e^{i\theta_{0}}Q(\cdot-x_{0})\right\|_{H^{1}}
\leq \epsilon(\rho).
\end{equation}
\end{lemma}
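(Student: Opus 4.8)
The plan is to argue by contradiction via a concentration–compactness argument, which is the standard route for this type of ``quantitative rigidity'' statement. Suppose the conclusion fails. Then there is $\delta_0>0$ and a sequence $u_n\in H^1$ such that
\[
\left|\|u_n\|_{L^V}-\|Q\|_{L^V}\right|+\left|\|u_n\|_{2}-\|Q\|_{2}\right|+\left|\|\nabla u_n\|_{2}-\|\nabla Q\|_{2}\right|\to 0,
\]
yet $\inf_{\theta_0\in\mathbb R,\,x_0\in\mathbb R^N}\|u_n-e^{i\theta_0}Q(\cdot-x_0)\|_{H^1}\geq\delta_0$ for all $n$. In particular $\{u_n\}$ is bounded in $H^1$, $\|u_n\|_2\to\|Q\|_2$, $\|\nabla u_n\|_2\to\|\nabla Q\|_2$, and $\|u_n\|_{L^V}\to\|Q\|_{L^V}$. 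By the definition of $C_{HLS}$ and \eqref{2.2}, equality in the Hardy–Littlewood–Sobolev inequality \eqref{hls} is achieved only at rescaled translates of $Q$; here we have a sequence that is \emph{asymptotically} optimizing, so the expected conclusion is that $u_n$ converges, up to translations and phase, to such an optimizer.

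The main step is therefore to extract this convergence. First I would apply the Lions concentration–compactness lemma (or, equivalently, the profile decomposition) to the bounded sequence $\{u_n\}$ in $H^1(\mathbb R^N)$. The vanishing scenario is ruled out because $\|u_n\|_{L^V}\to\|Q\|_{L^V}>0$ forces a nontrivial amount of $L^{p}$ mass (via interpolation/Hardy–Littlewood–Sobolev, the nonlinear term controls an $L^{2N/(N-1)}$-type norm from below), so $u_n$ cannot go to zero in the relevant Lebesgue spaces. The dichotomy scenario is ruled out by a subadditivity argument: splitting $u_n$ into two pieces carrying fixed fractions of the mass would, by the strict subadditivity of the variational problem
\[
J(m)=\inf\left\{\tfrac12\|\nabla w\|_2^2:\ \|w\|_2^2=m,\ \text{(suitably normalized)}\right\},
\]
contradict the fact that $\|\nabla u_n\|_2$ and $E(u_n)M(u_n)$ converge to exactly the ground-state values (we are sitting at the threshold $M(Q)E(Q)$, where no splitting of energy is possible). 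Hence compactness holds: there exist $x_n\in\mathbb R^N$ and $w\in H^1$, $w\neq 0$, with $u_n(\cdot+x_n)\to w$ strongly in $H^1$ along a subsequence. Strong convergence gives $\|w\|_2=\|Q\|_2$, $\|\nabla w\|_2=\|\nabla Q\|_2$, $\|w\|_{L^V}=\|Q\|_{L^V}$, so $w$ attains equality in \eqref{hls}; by the characterization of optimizers and the uniqueness of the ground state (recalled after \eqref{gs}), $w=e^{i\theta_0}Q(\cdot-y_0)$ for some $\theta_0,y_0$. Then $u_n(\cdot+x_n)\to e^{i\theta_0}Q(\cdot-y_0)$ in $H^1$, i.e. $\|u_n-e^{i\theta_0}Q(\cdot-(x_n+y_0))\|_{H^1}\to 0$, contradicting the lower bound $\delta_0$. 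Finally, the function $\epsilon(\rho)$ is produced in the usual way: define $\epsilon(\rho)$ to be the supremum of the left-hand side of \eqref{4.10} over all $u$ satisfying \eqref{4.9}; the argument just given shows $\epsilon(\rho)\to 0$ as $\rho\to 0$.

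The hard part is the compactness extraction — specifically, ruling out dichotomy. One must check that the constraint set is genuinely at the threshold so that any nontrivial mass splitting strictly decreases the attainable value of the kinetic energy (equivalently, strictly raises $M(u)E(u)$ above $M(Q)E(Q)$), which is what forbids the split. This is where the precise algebraic identities \eqref{2.2}–\eqref{2.3} and the scaling structure of the problem do the real work; everything else (vanishing, passing to the limit, invoking uniqueness of $Q$) is routine. Since this lemma is quoted from Lions~\cite{lions}, in the paper one would simply cite it, but the sketch above indicates how it is obtained.
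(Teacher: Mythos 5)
The paper does not actually prove this lemma: it is quoted verbatim from Lions~\cite{lions}, and the text preceding it says explicitly that Proposition~\ref{p41} ``will be carried out by means of the following result from Lions,'' so there is no in-paper argument to compare against. Your concentration--compactness sketch is the standard argument lying behind that citation, and in outline it is sound: a contradicting sequence $u_n$ with $\|u_n\|_2\to\|Q\|_2$, $\|\nabla u_n\|_2\to\|\nabla Q\|_2$, $\|u_n\|_{L^V}\to\|Q\|_{L^V}$ is an asymptotically optimizing sequence for the sharp inequality \eqref{hls}; vanishing is excluded because the potential term controls an intermediate Lebesgue norm from below; once compactness modulo translations is obtained, the limit attains equality in \eqref{hls} with the same $L^2$ and $\dot H^1$ norms as $Q$, so by \eqref{2.2} and the uniqueness of the ground state it equals $e^{i\theta_0}Q(\cdot-x_0)$ (the phase coming from the usual $|\nabla|u||\le|\nabla u|$ rigidity), contradicting the assumed lower bound $\delta_0$; and defining $\epsilon(\rho)$ as the supremum of the left side of \eqref{4.10} over the set \eqref{4.9} finishes the reduction.

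The one place your sketch would need repair is the dichotomy-exclusion step as you formulated it. The functional $J(m)=\inf\{\tfrac12\|\nabla w\|_2^2:\|w\|_2^2=m\}$ at fixed mass, and the heuristic of ``sitting at the threshold $M(Q)E(Q)$,'' are not the right variational framework here: this Hartree problem is $\dot H^{1/2}$-critical, hence $L^2$-supercritical, so the energy is unbounded below at fixed mass and that constrained problem carries no information (its infimum is $0$ and is not attained by $Q$). The correct setting is either compactness of maximizing sequences for the Weinstein-type quotient $\|u\|_{L^V}^4/\bigl(\|u\|_2\|\nabla u\|_2^3\bigr)$ --- where the hypotheses \eqref{4.9} pin the scaling and amplitude, leaving only translations and phase --- or, equivalently, the two-constraint minimization of $\|\nabla u\|_2$ at fixed $\|u\|_2$ and $\|u\|_{L^V}$, for which Lions' strict subadditivity argument rules out dichotomy. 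With that substitution your argument goes through; as it stands, the subadditivity claim is attached to a variational problem that does not make sense in this scaling regime.
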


\noindent\emph{Proof of Proposition~\ref{p41}.} As a result of
Remark~\ref{r43}, we will just prove the equivalent version
rescaling off the mass.
Set~$\tilde{u}(x)=\lambda^{-\frac{5}{2}}v(\lambda^{-1}x),$~and then
\eqref{4.6}~gives
\begin{equation}\label{4.11}
\left|\frac{\|\nabla \tilde{u}\|_{2}}{\|\nabla Q\|_{2}}-1\right|
\leq \rho.
\end{equation}
On the other hand, by~\eqref{2.2}, \eqref{4.5}~and~\eqref{4.6}~imply
\begin{align*}
2\left|\frac{\|v\|_{L^V}^{4}}{\|Q\|_{L^V}^{4}}-\lambda^{3}\right|
&\leq\left|\frac{E(v)}{E(Q)}-(2\lambda^{3}-3\lambda^{2})
\right|+3\left|\frac{\|\nabla v\|_{2}^{2}}{\|\nabla Q\|_{2}^{2}}
-\lambda^{2}\right|\\
& \leq\left(\rho\lambda^{3}+3\rho
\begin{cases}\lambda^{2}, & \lambda\geq 1\\
\lambda^{4}, & \lambda\leq 1
\end{cases} \right)\leq4\rho\lambda^{3}.
\end{align*}
Thus in terms of~$\tilde{u},$~we obtain
\begin{equation}\label{4.12}
\left|\frac{\| \tilde{u}\|_{L^V}^{4}}{\| Q\|_{L^V}^{4}}-1\right|
\leq 2\rho.
\end{equation}
Thus ~\eqref{4.11}~and~\eqref{4.12}~imply that $\tilde{u}$ satisfies
~\eqref{4.9} ($\rho$ may be different). By Lemma~\ref{p44}~and
rescaling back to ~$v,$~ we obtain~\eqref{4.7}~and~\eqref{4.8}.

\qquad\qquad\qquad\qquad\qquad\qquad\qquad\qquad\qquad\qquad\qquad\qquad\qquad\qquad\qquad\qquad\qquad\qquad
$\Box$

\section{Near-Boundary Case}

We know from Proposition~\ref{p21}~that if~$M(u)=M(Q)$~and~$E(u)/E(Q)=
3\lambda^{2}-2\lambda^{3}$~
for some~$\lambda>1$~and~$\|\nabla u_{0}\|_{2}/\|\nabla Q\|_{2}\geq\lambda,$~
then~$\|\nabla u(t)\|_{2}/\|\nabla Q\|_{2}\geq\lambda$~for all~$t.$~
Now in this section, we will claim that~$\|\nabla u(t)\|_{2}/\|\nabla Q\|_{2}$~cannot remain near~$\lambda$~
globally in time.

\begin{proposition}\label{p51}
Let~$\lambda_{0}>1.$~There exists~$\rho_{0}=\rho_{0}(\lambda_{0})>0$~with the property that~$\rho_{0}(\lambda_{0})\rightarrow 0$~
as~$\lambda_{0}\rightarrow 1,$~such that for any~$\lambda\geq\lambda_{0},$~the following holds:
There does not exist a solution~$u(t)$~of problem~\eqref{1.1}~with~$P(u)=0$~satisfying ~$M(u)=M(Q),$~
\begin{equation}\label{5.1}
\frac{E(u)}{E(Q)}=
3\lambda^{2}-2\lambda^{3},
\end{equation}
and for all~$t\geq0$~
\begin{equation}\label{5.2}
\lambda\leq\frac{\|\nabla u(t)\|_{2}}{\|\nabla Q\|_{2}}\leq\lambda(1+\rho_{0}).
\end{equation}
\end{proposition}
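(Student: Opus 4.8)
The plan is to argue by contradiction, assuming a solution $u(t)$ exists satisfying $M(u)=M(Q)$, $E(u)/E(Q)=3\lambda^2-2\lambda^3$, and the trapping bound \eqref{5.2} for all $t\ge 0$. The strategy is to show that such a solution is forced to be extremely close to a rescaled modulated ground state uniformly in time, which in turn forces it to be $H^1$-localized uniformly in time, which then triggers the localized virial blow-up of Proposition \ref{p32} --- contradicting global existence. First I would use \eqref{2.5} together with \eqref{5.1} and \eqref{5.2} to see that $\eta(t)^2 = \|\nabla u(t)\|_2^2\|u(t)\|_2^2/(\|\nabla Q\|_2^2\|Q\|_2^2)$ lies in $[\lambda^2,\lambda^2(1+\rho_0)^2]$, so $u(t)$ satisfies the hypotheses \eqref{4.1}--\eqref{4.2} of Proposition \ref{p41} (equivalently Remark \ref{r43} since $M(u)=M(Q)$) with parameter $\rho$ comparable to $\rho_0$. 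Hence for each $t$ there are $\theta(t)\in\mathbb R$, $x_0(t)\in\mathbb R^5$ with
\begin{equation*}
\bigl\|u(t)-e^{i\theta(t)}\lambda^{5/2}Q(\lambda(\cdot-x_0(t)))\bigr\|_{H^1}\le \epsilon(\rho_0),
\end{equation*}
where $\epsilon(\rho_0)\to 0$ as $\rho_0\to 0$ (after absorbing the factor $\lambda$, which is bounded on $\lambda\in[\lambda_0, \lambda_{\max}]$; note that \eqref{5.1} with $E(u)<E(Q)$ and $\lambda>1$ forces $\lambda$ to lie in a bounded range once we also exploit that $3\lambda^2-2\lambda^3$ must stay above the value dictated by positivity of $M(u)E(u)$, or simply argue on any fixed compact $\lambda$-range and let $\rho_0$ depend on it).

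The second step is to upgrade this pointwise-in-time modulation to a genuine uniform $H^1$-localization. Since $Q\in H^1$ decays, for any $\delta>0$ there is $\sigma>0$ with $\|\lambda^{5/2}Q(\lambda\,\cdot)\|_{H^1(|y|\ge \sigma)}\le\delta$; combined with the modulation bound this gives $\|u(t)\|_{H^1(|x-x_0(t)|\ge\sigma)}\le \delta+\epsilon(\rho_0)$ for every $t$. The key remaining point is to control the translation parameter $x_0(t)$: I would show $x_0(t)$ stays in a bounded set. This is exactly where the hypothesis $P(u)=0$ enters. One argues that if $|x_0(t)|\to\infty$ along some sequence then the center of mass (or, more robustly, the momentum-type quantity) of $u$ would have to drift at a nonzero rate --- the standard way is to use the identity $\frac{d}{dt}\int x|u|^2 = 2P(u) = 0$, so $\int x|u(t)|^2\,dx$ is constant in $t$; since $u(t)$ concentrates near $x_0(t)$ with mass $\approx M(Q)$ and small tails, $\int x|u(t)|^2\,dx \approx M(Q)\,x_0(t)$, forcing $x_0(t)$ to remain within a fixed ball (its size controlled by $\int x|u_0|^2$ — one may need the variance finite, but if not, one replaces $x$ by a truncated weight and runs an approximate-conservation argument, or invokes the profile-decomposition machinery promised for later sections). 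Either way we conclude there is a fixed $R$, independent of $t$, with $\|u(t)\|_{H^1(|x|\ge R)}\le \delta+\epsilon(\rho_0)$ for all $t\ge 0$.

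Finally, by Remark \ref{Hlocal}, this uniform $H^1$-localization makes the localized-virial hypothesis \eqref{vlocal} hold (choose $\rho_0$ small and $\delta$ small so that $\delta+\epsilon(\rho_0)$ is below the threshold determined by $\gamma$, where $\gamma$ is picked with $0<\gamma<\min\{\lambda_0-1,1\}$). Then Proposition \ref{p32} applies: $u(t)$ must blow up in finite forward time, contradicting the assumption that \eqref{5.2} holds for \emph{all} $t\ge 0$. This contradiction proves the proposition, with the dependence $\rho_0=\rho_0(\lambda_0)\to 0$ as $\lambda_0\to1$ inherited from the requirement that $\epsilon(\rho_0)$ be small relative to $\gamma\le\lambda_0-1$. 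I expect the main obstacle to be the control of $x_0(t)$ without assuming finite variance: making the "center of mass is almost conserved" argument rigorous for merely $H^1$ data requires a careful truncation/limiting argument (or an appeal to the concentration-compactness structure), and this is the one place where the hypothesis $P(u)=0$ is genuinely used; everything else is a bookkeeping combination of the variational Proposition \ref{p41} and the localized virial Proposition \ref{p32}.
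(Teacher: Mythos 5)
Your overall architecture (modulation via Proposition \ref{p41}, then a virial-type argument using $P(u)=0$, with $\rho_0$ chosen small relative to $\lambda_0-1$) matches the paper's, but the pivotal step of your proof has a genuine gap: you claim that $P(u)=0$ forces the translation parameter $x_0(t)$ to stay in a \emph{bounded} set, giving uniform $H^1$-localization at a fixed radius $R$ so that Proposition \ref{p32} applies. This cannot be justified with merely $H^1$ data. The identity $\frac{d}{dt}\int x|u|^2=2P(u)$ is unavailable without finite variance, and the truncated substitute you invoke only yields a bound of the form $|z'_R(t)|\leq c\epsilon$ (the tails of size $\epsilon(\rho_0)$ can carry momentum opposite to the bulk), which permits the bulk to drift \emph{linearly in time} at a small speed $O(\epsilon)$; it does not confine $x_0(t)$ to a fixed ball. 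Indeed, this drift-rate bound, $|x(t)|/t\leq c\epsilon$ as $t\to+\infty$, is exactly the content of the paper's Lemma \ref{l52}, and the paper's proof of Proposition \ref{p51} is built around this weaker information: it fixes $T$, runs the local virial identity \eqref{local virial} with the $T$-dependent radius $R=2R(T)$, $R(T)=\max(\max_{0\le t\le T}|x(t)|,R(\epsilon(\rho)))$, shows $|A_R(u(t))|\leq c\lambda^3\epsilon(\rho)^2$ from the modulation bounds plus the tail of $Q$, obtains $z''_R(t)\leq-24E(Q)\lambda^2(\lambda-1)$, and then divides by $T^2$, using Lemma \ref{l52} to make the boundary contributions $c(R(T)^2/T^2+\lambda R(T)/T)$ negligible, reaching $0\leq z_{2R(T)}(T)/T^2<0$. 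Your route, which needs a time-independent radius to feed into Proposition \ref{p32}, does not survive a slowly drifting center; repairing it would require either a moving-center local virial (with extra terms you have not estimated) or precisely the quantitative $[0,T]$ bookkeeping the paper performs.

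A secondary issue: your remark that $\lambda$ "lies in a bounded range" is false, since $E(u)$ may be negative and the proposition must hold for every $\lambda\geq\lambda_0$ with $\rho_0$ depending only on $\lambda_0$; restricting to a compact $\lambda$-range and letting $\rho_0$ depend on it does not prove the stated result. The paper handles this by tracking powers of $\lambda$ explicitly (the error $c\lambda\epsilon(\rho)^2$ is compared against $\lambda^2(\lambda-1)$, and $(\lambda-1)/\lambda\geq(\lambda_0-1)/\lambda_0$ gives uniformity in $\lambda\geq\lambda_0$), and any correct proof must do something similar.
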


We would like to give another equivalent statement implied by this assertion:
For any solution~$u(t)$~to ~\eqref{1.1}~with~$P(u)=0$~satisfying ~$M(u)=M(Q),$
 \eqref{5.1},
and $
\frac{\|\nabla u(t)\|_{2}}{\|\nabla Q\|_{2}}\geq \lambda
$
for all $t\geq0$,
there exist a time $t_{0}\geq0$ such that
$
\frac{\|\nabla u(t_{0})\|_{2}}{\|\nabla Q\|_{2}}\geq\lambda(1+\rho_{0}).
$

Before proving Proposition~\ref{p51}, following the idea of
\cite{nonradial}, we introduce a useful lemma.

\begin{lemma}\label{l52}
Suppose that ~$u(t)$~with~$P(u)=0$~solving~\eqref{1.1}~satisfies, for all~$t$~
\begin{equation}\label{5.3}
\left\|u(t)-e^{i\theta(t)}Q(\cdot-x(t))\right\|^2_{H^{1}}
\leq \epsilon
\end{equation}
for some continuous functions~$\theta(t)$~and~$x(t).$~Then if $\epsilon>0$ is sufficiently  small, we have
$$\frac{|x(t)|}{t}\leq c\epsilon\ \ \ as\ \ t\rightarrow+\infty.$$
\end{lemma}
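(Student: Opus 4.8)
The plan is to exploit the momentum-zero condition together with the variational proximity hypothesis \eqref{5.3} to show that the translation parameter $x(t)$ moves slowly. The key identity is the standard weighted-$L^2$ (truncated virial) computation: for a smooth compactly supported radial cutoff $\psi$ equal to $x$ for $|x|\le 1$, one has $\frac{d}{dt}\int \psi_R(x)|u|^2\,dx = 2\,\mathrm{Im}\int \bar u\,\nabla\psi_R\cdot\nabla u\,dx$. Taking $\psi_R(x)=R\,\vec\varphi(x/R)$ (a vector-valued cutoff that equals $x$ on $|x|\le R$) and writing the center of mass as (essentially) $x(t)$ up to $O(\epsilon)$ corrections coming from \eqref{5.3}, the time derivative of the center of mass is, to leading order, the momentum density integrated against $u$, which by hypothesis $P(u)=0$ vanishes. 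So the heuristic is: $\dot x(t)\approx \frac{2P(u)}{M(u)}=0$, hence $x(t)=x(0)+O(\epsilon\,t)$, giving $|x(t)|/t\le c\epsilon$.

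To make this rigorous I would proceed as follows. First, define $X_R(t)=\frac{1}{M(Q)}\int \psi_R(x)|u(x,t)|^2\,dx$ with $\psi_R$ as above; by \eqref{5.3} and the fact that $Q$ is radial with exponential decay, $|X_R(t)-x(t)|\le c(\epsilon + e^{-cR})$ once $R$ is large relative to a fixed threshold (the $Q$-part contributes essentially zero to the first moment by radial symmetry, and the cross terms and the error term $u-e^{i\theta}Q(\cdot-x(t))$ are controlled by $\sqrt\epsilon$ in $H^1$, hence in $L^2$ with the linear weight near $x(t)$). Second, differentiate: $X_R'(t)=\frac{2}{M(Q)}\mathrm{Im}\int \bar u\,\nabla\psi_R\cdot\nabla u\,dx = \frac{2}{M(Q)}\mathrm{Im}\int \bar u\,\nabla u\,dx + \frac{2}{M(Q)}\mathrm{Im}\int \bar u\,(\nabla\psi_R - \mathrm{Id})\cdot\nabla u\,dx$. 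The first term is $\frac{2P(u)}{M(Q)}=0$. The second term is supported in $|x|>R$; using \eqref{5.3} plus exponential decay of $Q$, the profile contributes $O(e^{-cR})$ there, so once $R$ is chosen large (depending only on a fixed small parameter) this term is $O(\epsilon)$ uniformly in $t$. Hence $|X_R'(t)|\le c\epsilon$ for all $t$. Third, integrate from $0$ to $t$: $|X_R(t)|\le |X_R(0)| + c\epsilon t$, and combine with the first step to get $|x(t)|\le c(\epsilon + e^{-cR}) + c\epsilon t$; dividing by $t$ and sending $t\to+\infty$ yields $\limsup_{t\to\infty}|x(t)|/t \le c\epsilon$, which is the claim (absorbing the $e^{-cR}$ into $c\epsilon$ by taking $R$ large, or simply noting it is fixed and vanishes after division by $t$).

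The main obstacle — and the place requiring care — is the first step: controlling the difference between the genuine first moment $X_R(t)$ and the parameter $x(t)$, and more subtly, the fact that $x(t)$ is only known to be \emph{some} continuous function making \eqref{5.3} hold, not canonically the center of mass. One must argue that the modulation parameter can, without loss of generality, be taken close to the center of mass: given \eqref{5.3}, any admissible $x(t)$ satisfies $|x(t) - X_R(t)|\le c\sqrt\epsilon$ (plus the fixed $e^{-cR}$), because shifting the profile by more than $O(\sqrt\epsilon)$ would increase the $H^1$ distance beyond $\sqrt\epsilon$ — this uses the coercivity/non-degeneracy of $Q$ near itself, i.e. that $\|Q(\cdot-a)-Q\|_{H^1}\gtrsim |a|$ for small $|a|$. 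A second technical point is ensuring all the $O(\epsilon)$ bounds on $X_R'$ are genuinely \emph{uniform in $t$}, which they are since \eqref{5.3} holds for all $t$ with a fixed $\epsilon$ and $R$ is chosen once and for all; one should also check that $x(t)$ cannot drift so far that the tail region $|x|>R$ eventually captures the bulk of the profile, but that is excluded a posteriori by the very bound $|x(t)|/t\le c\epsilon$ being derived (or one runs the estimate on any finite interval first and then lets the interval grow). Everything else — the virial differentiation, the Cauchy–Schwarz bounds, the exponential decay estimates on $Q$ — is routine.
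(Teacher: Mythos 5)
Your scheme rests on a single \emph{fixed} truncation radius $R$, and both of your key claims fail precisely when $|x(t)|$ leaves the ball of radius $R$ --- which is exactly what cannot be ruled out a priori. The correction term $\mathrm{Im}\int\bar u\,(\nabla\psi_R-\mathrm{Id})\cdot\nabla u\,dx$ is supported in $\{|x|>R\}$, but by \eqref{5.3} the mass of $u$ sits near $x(t)$; as soon as $|x(t)|>R+R_0$ this term is of size $\|u\|_2\|\nabla u\|_2=O(1)$, not $O(\epsilon)$, so the bound $|X_R'(t)|\le c\epsilon$ is not uniform in $t$. The comparison $|X_R(t)-x(t)|\le c(\epsilon+e^{-cR})$ is also too strong even while $|x(t)|\le R$: the error $w=u-e^{i\theta}Q(\cdot-x(t))$ is only small in $H^1$ with no spatial localization, so $\int|\psi_R|\,|w|^2$ can only be bounded by $cR\epsilon$ and the cross term by $c(|x(t)|+R_0)\sqrt\epsilon$; these errors grow with $R$ and $|x(t)|$. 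Your two suggested repairs --- ``excluded a posteriori by the very bound being derived'' (circular) and ``run the estimate on finite intervals and let them grow'' --- do not close the argument as stated, because once $R$ must grow with the interval or with the excursion of $x(t)$, the $O(\epsilon)$ claims degenerate into $O(R\epsilon)$ ones and the simple integrate-and-divide-by-$t$ conclusion no longer follows. (A minor additional point: the paper explicitly avoids any exponential decay of $Q$, using only that $Q\in H^1$ has small tails; your use of $e^{-cR}$ decay is unnecessary, though not wrong.)

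The paper runs the same physical mechanism (componentwise truncated center of mass, zero momentum, uniform localization of $u$ around $x(t)$ coming from \eqref{5.3} and \eqref{Qlocal}), but organizes it as a contradiction at exit times, which is exactly what repairs the gap above. Assuming $|x(t_n)|/t_n\ge\epsilon_0$ along $t_n\to\infty$, one sets $R_n=|x(t_n)|$, lets $\tilde t_n$ be the first time $|x(t)|$ reaches $R_n$, and truncates at the excursion-adapted radius $\tilde R_n=R_n+R_0(\epsilon)$. On $[0,\tilde t_n]$ one has $|x(t)|<R_n$, so the exterior region $|x|>\tilde R_n$ carries only $O(\epsilon)$ of $|u|^2+|\nabla u|^2$ and the derivative bound $|z'_{\tilde R_n}(t)|\le 5\epsilon$ genuinely holds on the whole interval; moreover the truncated moment is compared with $x(t)M(u)$ only at $t=0$ and $t=\tilde t_n$, with errors of size $\tilde R_n\epsilon+R_0(\epsilon)M(u)$ which are harmless because the displacement forced at the exit time is $\tilde R_n M(u)\gtrsim\epsilon_0\,\tilde t_n M(u)$, while the accumulated drift is only $5\epsilon\,\tilde t_n$; choosing $\epsilon$ small relative to $\epsilon_0 M(u)$ gives the contradiction. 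If you prefer a direct (non-contradiction) version, you must at least let $R$ depend on $\sup_{[0,T]}|x|$, keep the $R\epsilon$ and $|x|\sqrt\epsilon$ error terms explicit, and close with an absorption argument at the time where the supremum is attained; as written, your proposal has a genuine gap at exactly this point.
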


\begin{proof}
We argue by contradiction. If not, \eqref{5.3} holds for any small $\epsilon>0$ while  there exists a time sequence
$t_n\rightarrow+\infty$ such that $|x(t_n)|/t_n\geq\epsilon_0$ with some $\epsilon_0>0.$
Without loss of generality we assume $x(0)=0.$
For $R>0$ we define $t_0(R)=\inf\{t\geq0: x(t)\geq R\}$
and then by the continuity of $x(t)$
there holds that
1), $t_0(R)>0$; 2), $|x(t)|<R$ for $0\leq t\leq t_0(R)$; and 3), $|x(t_0(R))|=R.$
If we set  $R_n=|x(t_n)|$ and $\tilde{t}_n=t_0(R_n),$
then $t_n\geq\tilde{t}_n$ which implies that $R_n/\tilde{t}_n\geq\epsilon_0.$
We get from $|x(t_n)|/t_n\geq\epsilon_0$ and $t_n\rightarrow+\infty$ that $R_n=|x(t_n)|\rightarrow+\infty.$
Thus, $\tilde{t}_n=t_0(R_n)\rightarrow+\infty.$
In the sequel , we will work on the time interval $[0,\tilde{t}_n]$
to get a contradiction.

For that purpose we  need a uniform localization . That is for any $\epsilon>0$
there exists $R_0(\epsilon)\geq0$ such that for all $t\geq0$, there holds that
\begin{align}\label{local}
\int_{|x-x(t)|\geq R_0(\epsilon)}|u|^2+|\nabla u|^2dx \leq 2\epsilon.
\end{align}
In fact, since the ground state $Q\in H^1,$  there must exist $R(\epsilon)>0$ such that
\begin{align}\label{Qlocal}
\int_{|x|\geq R(\epsilon)}|Q|^2+|\nabla Q|^2+(V\ast|Q|^2)|Q|^2dx \leq \epsilon.
\end{align}
Thus, take $R_0(\epsilon)=R(\epsilon)$, we have
\begin{align*}
\int_{|x-x(t)|\geq R_0(\epsilon)}|u|^2+|\nabla u|^2dx
\leq&\int|u-e^{i\theta(t)}Q(\cdot-x(t))|^2+|\nabla (u-e^{i\theta(t)}Q(\cdot-x(t)))|^2dx\\&
+\int_{|x-x(t)|\geq R(\epsilon)}|Q(\cdot-x(t))|^2+|\nabla Q(\cdot-x(t))|^2dx\leq 2\epsilon.
\end{align*}

For $x\in \mathbb{R}$, let  $\theta(x)\in C_c^{\infty}$ such that $\theta(x)=x$
for $-1\leq x\leq1$, $\theta(x)=0$ for $|x|\geq 2^{\frac{1}{3}},$
$|\theta(x)|\leq |x|,$ $\|\theta(x)\|_\infty\leq2$ and $\|\theta'(x)\|_\infty\leq4.$
For $x\in \mathbb{R}^5$, let $\phi(x)=(\theta(x_1),\cdot\cdot\cdot,\theta(x_5))$
and then $\phi(x)=x$ for $|x|\leq1$ and $\|\phi(x)\|_\infty\leq2$. For $R>0$,
set $\phi_R=R\phi(x/R)$.
We consider the truncated center of mass: $z_R(t)=\int\phi_R(x)|u(x,t)|^2dx$
and $[z'_R(t)]_j=2Im\int\theta'(x_j/R)\partial_ju\bar{u}dx$.

 By the zero momentum property
we obtain $|z'_R(t)|\leq 5\int_{|x|\geq R}|u|^2+|\nabla u|^2dx.$
Setting $\tilde{R}_n=R_n+R_0(\epsilon)$, we then have for $0\leq t\leq \tilde{t}_n$ and
$|x|>\tilde{R}_n$,  $|x-x(t)|\geq R_0(\epsilon).$ Then by the uniform localization \eqref{local},
we obtain
\begin{align}\label{00}
|z'_{\tilde{R}_n}(t)|\leq 5\epsilon.
\end{align}

Now we claim that
\begin{align}\label{0}
|z_{\tilde{R}_n}(0)|\leq R_0(\epsilon)M(u)+2\tilde{R}_n\epsilon
\end{align}
and
\begin{align}\label{t}
|z_{\tilde{R}_n}(\tilde{t}_n)|\geq \tilde{R}_n(M(u)-3\epsilon)-2R_0(\epsilon)M(u).
\end{align}
In fact, firstly,  the  upper bound for $z_{\tilde{R}_n}(0)$ can be obtained by
$$z_{\tilde{R}_n}(0)=\int_{|x|<R_0(\epsilon)}\phi_{\tilde{R}_n}(x)|u_0(x)|^2dx
+\int_{|x|\geq R_0(\epsilon)}\phi_{\tilde{R}_n}(x)|u_0(x)|^2dx$$
and \eqref{local}  immediately.
We next show the lower bound for $z_{\tilde{R}_n}(t)$ as follows.
For $0\leq t\leq \tilde{t}_n$, we split $z_{\tilde{R}_n}(t)$ as
$$z_{\tilde{R}_n}(t)=\int_{|x-x(t)|<R_0(\epsilon)}\phi_{\tilde{R}_n}(x)|u(x,t)|^2dx
+\int_{|x-x(t)|\geq R_0(\epsilon)}\phi_{\tilde{R}_n}(x)|u(x,t)|^2dx\equiv I+II.$$
Again from \eqref{local}, we obtain that $|II|\leq2\tilde{R}_n\epsilon.$
For $I$,  since $|x|\leq|x-x(t)|+|x(t)|\leq R_0(\epsilon)+R_n=\tilde{R}_n\epsilon$,
we can rewrite $I$ as
\begin{align*}
I&=\int_{|x-x(t)|<R_0(\epsilon)}(x-x(t))|u(x,t)|^2dx+x(t)\int_{|x-x(t)|< R_0(\epsilon)}|u(x,t)|^2dx\\
&=\int_{|x-x(t)|<R_0(\epsilon)}(x-x(t))|u(x,t)|^2dx+x(t)M(u)-x(t)
\int_{|x-x(t)|\geq R_0(\epsilon)}|u(x,t)|^2dx\\
&\equiv I_1+I_2+I_3.
\end{align*}
Since $|I_1|\leq R_0(\epsilon)M(u)$, and by \eqref{local},
$|I_3|\leq |x(t)|\epsilon$, thus we have
$$|z_{\tilde{R}_n}(t)|\geq |I_2|-|I_1|-|I_3|-|II|\geq
|x(t)|M(u)- R_0(\epsilon)M(u)-3\tilde{R}_n\epsilon,$$
which gives \eqref{t} since $|x(\tilde{t}_n)|=R_n$.

Combining \eqref{00}, \eqref{0} and \eqref{t}, we obtain
$$5\epsilon\tilde{t}_n\geq\left|\int_0^{\tilde{t}_n}z'_{\tilde{R}_n}(t)dt \right|
\geq|z_{\tilde{R}_n}(\tilde{t}_n)-z_{\tilde{R}_n}(0)|\geq\tilde{R}_n(M(u)-5\epsilon)-3R_0(\epsilon)M(u).$$
Thus assuming  $\epsilon\leq\frac{M(u)}{5}$,  since $\tilde{R}_n\geq R_n$ and $R_n/\tilde{t}_n\geq\epsilon_0,$
we finally obtain
$$5\epsilon\geq\epsilon_0(M(u)-5\epsilon)-\frac{3R_0(\epsilon)M(u)}{\tilde{t}_n}.$$
If taking $\epsilon<M(u)\epsilon_0/20$ and letting  $n\rightarrow\infty$ ($\tilde{t}_n\rightarrow\infty$ therefore),
we get a contradiction.

\end{proof}

We shall prove Proposition \ref{p51} using the above lemma, and
 our arguments will not use
any exponential decay property of the Ground State $Q$ , which is different from those
 when dealing with the Schr\"{o}dinger equation.\\

\noindent\emph{ Proof of Proposition~\ref{p51}.} To the contrary, we
suppose that there exists a solution~$u(t)$~satisfying
$M(u)=M(Q),$~~$E(u)/E(Q)= 3\lambda^{2}-2\lambda^{3}$~and
\begin{equation}\label{5.4}
\lambda\leq\frac{\|\nabla u(t)\|_{2}}{\|\nabla Q\|_{2}}\leq\lambda(1+\rho_{0}).
\end{equation}
Since
~$\|\nabla u(t)\|_{2}^{2}\geq\lambda^{2}\|\nabla Q\|_{2}^{2}=6\lambda^{2}E(Q),$~
we have
\begin{align*}
24E(u) -4\|\nabla u(t)\|_{2}^{2}
\leq-48E(Q) \lambda^{2}(\lambda-1).
\end{align*}
By Proposition~\ref{p41}, there exist functions~$\theta(t)$~and~$x(t)$~such that for~$\rho=\rho_{0}$~
\begin{equation}\label{5.5}
\left\|u(t)-e^{i\theta(t)}\lambda^{\frac{5}{2}}Q\left(\lambda(\cdot-x(t))
\right)
\right\|_{2}\leq \epsilon(\rho)
\end{equation}
and
\begin{equation}\label{5.6}
\left\|\nabla\left[u(t)-e^{i\theta(t)}\lambda^{\frac{5}{2}}Q\left(\lambda(\cdot-x(t))
\right)\right]
\right\|_{2}\leq\lambda\epsilon(\rho).
\end{equation}
By the continuity of the~$u(t)$~flow, we may assume~$\theta(t)$~and~$x(t)$~are continuous.
Let$$R(T)=\max\left(\max_{0\leq t\leq T}|x(t)|,R(\epsilon(\rho))\right),$$
where $R(\epsilon(\rho))$ is given by \eqref{Qlocal} with $R(\epsilon(\rho))\rightarrow+\infty$
as $\rho\rightarrow0$.
For fixed~$T,$~take~$R=2R(T)$~in the local virial identity \eqref{local virial}.
Then we claim
$$\left|A_{R}(u(t))\right|\leq c\lambda^{3}\epsilon(\rho)^{2}.$$
In fact,
\begin{align*}
\|u\|_{L^V(|x|\geq R)}\leq \|u-e^{i\theta(t)}\lambda^{\frac{5}{2}}Q\left(\lambda(\cdot-x(t))
\right)\|_{L^V}+\|e^{i\theta(t)}\lambda^{\frac{5}{2}}Q\left(\lambda(\cdot-x(t))\right)\|_{L^V(|x|\geq R)}.
\end{align*}
By Hardy-Littlewood-Sobolev inequality \eqref{hls}, \eqref{5.5} and \eqref{5.6} imply that
$$\|u-e^{i\theta(t)}\lambda^{\frac{5}{2}}Q\left(\lambda(\cdot-x(t))
\right)\|_{L^V}^4\leq\lambda^{3}\epsilon(\rho)^{4}.$$
On the other hand,  by \eqref{Qlocal}, we have
\begin{align*}
&\|e^{i\theta(t)}\lambda^{\frac{5}{2}}Q\left(\lambda(\cdot-x(t))\right)\|_{L^V(|x|\geq R)}^4
\leq\|\lambda^{\frac{5}{2}}Q\left(\lambda(\cdot)\right)\|_{L^V(|x|\geq R-\max_{0\leq t\leq T}|x(t)|)}^4\\
\leq&\|\lambda^{\frac{5}{2}}Q\left(\lambda(\cdot)\right)\|_{L^V(|x|\geq R(T))}^4
\leq\|\lambda^{\frac{5}{2}}Q\left(\lambda(\cdot)\right)\|_{L^V(|x|\geq R(\epsilon(\rho)))}^4
\leq\lambda^{3}\epsilon(\rho)^{4}.
\end{align*}
Similarly but more easily, we also have
$\|u\|^2_{L^2(|x|>R)}\leq c\epsilon(\rho)^{2}.$
Thus  \eqref{3.3} implies the claim.

Taking~$\rho_{0}$~small enough to make~$ \epsilon(\rho)$~small such that for all~~$0\leq t\leq T,$~
$$z''_{R}(t)\leq -24E(Q)\lambda^{2}(\lambda-1),$$
and so
$$\frac{z_{R}(T)}{T^{2}}\leq\frac{z_{R}(0)}{T^{2}}+\frac{z'_{R}(0)}{T}-12E(Q)\lambda^{2}(\lambda-1).$$
By definition of~$z_{R}(t)$~we have
$$|z_{R}(0)|\leq cR^{2}\|u_{0}\|_{2}^{2}=c\|Q\|_{2}^{2}R^{2}$$and
$$|z'_{R}(0)|\leq cR\|u_{0}\|_{2}\|\nabla u_{0}\|_{2}\leq
c\|Q\|_{2}\|\nabla Q\|_{2}R(1+\rho_{0})\lambda.$$
Consequently,
$$\frac{z_{2R(T)}(T)}{T^{2}}\leq c\left(\frac{R(T)^{2}}{T^{2}}+\frac{\lambda R(T)}{T}\right)-12E(Q)\lambda^{2}(\lambda-1).$$
Taking ~$T$~sufficiently large, from Lemma~\ref{l52}~we have
$$0\leq \frac{z_{2R(T)}(T)}{T^{2}}\leq c\left(\lambda\epsilon(\rho)^{2}-\lambda^{2}(\lambda-1)\right)<0$$
provided  ~$\rho_{0}$~ is small enough.

Note that ~$\rho_{0}$~is independent of ~$T$. We then get a
contradiction and complete our proof.

\qquad\qquad\qquad\qquad\qquad\qquad\qquad\qquad\qquad\qquad\qquad\qquad\qquad\qquad\qquad\qquad\qquad\qquad\quad$\Box$

\section{Profile Decomposition}

 The following Keraani-type profile decomposition will play  an important role in
our future  discussion.
\begin{lemma}\label{lpd}
(Profile expansion). Let $\phi_{n}(x)$ be an uniformly bounded
sequence in $H^{1}$,
then for each M there exists a subsequence of $\phi_{n}$, also
denoted by $\phi_{n}$, and (1) for each $1\leq j\leq M$, there
exists a (fixed in n) profile $\tilde{\psi}^{j}(x)$ in $H^1$,
 (2) for each $1\leq j\leq M$, there exists a sequence(in n)of time
shifts $t_{n}^{j}$, (3) for each $1\leq j \leq M$, there exists a
sequence (in n) of space shifts $x_{n}^{j}$, (4) there exists a
sequence (in n) of remainders $\tilde{W}_{n}^{M}(x)$ in $H^1$, such that
$$\phi_{n}(x)=\sum_{j=1}^{M}e^{-it_{n}^{j}\Delta}\tilde{\psi}^{j}(x-x_{n}^{j})+\tilde{W}_{n}^{M}(x),$$
The time and space sequences have a pairwise divergence property,
i.e., for $1\leq j\neq k\leq M$, we have
\begin{equation}\label{divergence}
\lim_{n\rightarrow+\infty}(
|t_{n}^{j}-t_{n}^{k}|+|x_{n}^{j}-x_{n}^{k}|)=+\infty.
\end{equation}
The remainder sequence has the following asymptotic smallness
property:
\begin{equation}\label{remainder}
\lim_{M\rightarrow+\infty}[\lim_{n\rightarrow+\infty}\|e^{it\Delta}\tilde{W}_{n}^{M}\|_{S(\dot{H}^{\frac{1}{2}})}]=0.
\end{equation}
For fixed M and any $0\leq s\leq1$, we have the asymptotic
Pythagorean expansion:
\begin{equation}\label{hsexpansion}
\|\phi_{n}\|_{\dot{H}^{s}}^{2}=\sum_{j=1}^{M}\|\tilde{\psi}^{j}\|_{\dot{H}^{s}}^{2}+\|\tilde{W}_{n}^{M}\|_{\dot{H}^{s}}^{2}+o_{n}(1).
\end{equation}
\end{lemma}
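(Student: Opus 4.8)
The statement to prove is the profile decomposition lemma (Lemma 6.1, \texttt{lpd}). This is a Keraani-type result, and the plan is to follow the now-standard iterative extraction scheme adapted to the $\dot H^{1/2}$-Strichartz setting for the 5D Schrödinger flow.

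\medskip

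\noindent\emph{Proof plan.} First I would set up the extraction of a single profile: given a bounded sequence $g_n$ in $H^1$, consider $A(\{g_n\}) = \limsup_n \|e^{it\Delta}g_n\|_{S(\dot H^{1/2})}$. If this is zero there is nothing to extract and the remainder is already small; otherwise, using the inhomogeneous/refined Strichartz inequality together with Sobolev embedding (the Kato estimate \eqref{inhomo} and an interpolation between an $L^q_t L^r_x$ norm and a Besov-type norm, essentially as in Keraani), one shows there exist $t_n$, $x_n$ and a nonzero weak limit $\psi := \mathrm{w}\text{-}\lim_n e^{it_n\Delta}g_n(\cdot + x_n)$ in $H^1$ whose $\dot H^{1/2}$-norm is bounded below by a positive power of $A(\{g_n\})$ times the $H^1$-bound. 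Then I would iterate: apply this to $\phi_n^{(1)} = \phi_n$, extract $\tilde\psi^1, t_n^1, x_n^1$, set $\tilde W_n^1 = \phi_n - e^{-it_n^1\Delta}\tilde\psi^1(\cdot - x_n^1)$, and repeat on $\tilde W_n^{j-1}$ to get $\tilde\psi^j, t_n^j, x_n^j, \tilde W_n^j$, passing to a diagonal subsequence at each stage so that all weak limits persist.

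\medskip

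\noindent The orthogonality (divergence) property \eqref{divergence} is forced by the weak-limit construction: if for some $j \neq k$ the quantities $|t_n^j - t_n^k| + |x_n^j - x_n^k|$ stayed bounded along a subsequence, one could pass to a further subsequence where $t_n^j - t_n^k \to \tau$ and $x_n^j - x_n^k \to y$, and then the defining weak limit of $\tilde\psi^k$ would pick up a nonzero contribution from $e^{-i\tau\Delta}\tilde\psi^j(\cdot - y)$, contradicting the fact that by construction $e^{it_n^k\Delta}\tilde W_n^{k-1}(\cdot + x_n^k)$ has already had the $j$-th profile removed (i.e. $\tilde\psi^j$ does not "reappear" in a later remainder). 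This is the standard argument and I would phrase it as an induction on the number of extracted profiles. The Pythagorean expansions \eqref{hsexpansion} for $0 \le s \le 1$ follow for the same reason: writing $\|\phi_n\|_{\dot H^s}^2$ as a sum of squared norms of the profile terms plus remainder, all cross terms $\langle e^{-it_n^j\Delta}\tilde\psi^j(\cdot - x_n^j), e^{-it_n^k\Delta}\tilde\psi^k(\cdot - x_n^k)\rangle_{\dot H^s}$ and $\langle e^{-it_n^j\Delta}\tilde\psi^j(\cdot-x_n^j), \tilde W_n^M\rangle_{\dot H^s}$ vanish as $n\to\infty$ by \eqref{divergence} and by the weak convergence $e^{it_n^j\Delta}\tilde W_n^M(\cdot+x_n^j) \rightharpoonup 0$ respectively, after a density argument reducing $\tilde\psi^j$ to Schwartz functions.

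\medskip

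\noindent Finally, the asymptotic smallness \eqref{remainder}: from \eqref{hsexpansion} with $s = 1/2$ one gets $\sum_j \|\tilde\psi^j\|_{\dot H^{1/2}}^2 \le \limsup_n \|\phi_n\|_{\dot H^{1/2}}^2 < \infty$, so $\|\tilde\psi^j\|_{\dot H^{1/2}} \to 0$ as $j \to \infty$. Combined with the lower bound $\|\tilde\psi^j\|_{\dot H^{1/2}}^2 \gtrsim A(\{\tilde W_n^{j-1}\})^{\beta} / (\text{bound})^{\beta'}$ from the single-profile step, this forces $A(\{\tilde W_n^{M}\}) = \limsup_n \|e^{it\Delta}\tilde W_n^M\|_{S(\dot H^{1/2})} \to 0$ as $M \to \infty$, which is exactly \eqref{remainder}.

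\medskip

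\noindent I expect the main obstacle to be the single-profile extraction step: one needs a \emph{quantitative} inverse Strichartz estimate asserting that a nontrivial $S(\dot H^{1/2})$-norm of the linear evolution forces a nontrivial weak limit after translation in space and time, with an explicit lower bound on the $\dot H^{1/2}$-norm of that limit. This requires a frequency-localized refinement of the Strichartz inequality (a bilinear or improved Strichartz estimate in $\mathbb R^5$ at regularity $\dot H^{1/2}$), and care is needed because $H^1$-boundedness rather than $\dot H^{1/2}$-boundedness is what is assumed — so the interpolation must be arranged to land the profiles and remainders in $H^1$ uniformly while controlling the $\dot H^{1/2}$ Strichartz norm. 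Everything downstream (orthogonality, Pythagorean expansion, smallness) is then a mechanical consequence of the weak-limit bookkeeping.
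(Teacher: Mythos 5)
Your outline follows exactly the route the paper itself relies on: the paper gives no proof of Lemma \ref{lpd} at all, but simply observes that the linear flow is the same as for NLS and cites the Keraani-type argument of Duyckaerts--Holmer--Roudenko, which is the iterative weak-limit extraction you describe. The bookkeeping parts of your plan (pairwise divergence \eqref{divergence} forced by the construction, vanishing of $\dot H^{s}$ cross terms via \eqref{divergence} and the weak convergence $e^{it_n^j\Delta}\tilde W_n^M(\cdot+x_n^j)\rightharpoonup 0$, and the smallness \eqref{remainder} obtained from summability of $\|\tilde\psi^j\|_{\dot H^{1/2}}^2$ combined with a quantitative lower bound from the extraction step) are correctly described and are indeed mechanical once the extraction step is in hand.

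The genuine gap is precisely the step you yourself flag and leave unproved: the single-profile extraction with a quantitative lower bound on the extracted profile in terms of $\limsup_n\|e^{it\Delta}\tilde W_n^{M}\|_{S(\dot H^{1/2})}$. Without it the iteration never gets off the ground, so as written the proposal is a plan rather than a proof. Moreover your diagnosis of what that step requires overshoots: no bilinear or frequency-localized refined Strichartz estimate is needed here. Because the sequence is bounded in $H^1$, i.e.\ strictly above the critical regularity $\dot H^{1/2}$, no scaling parameters occur in the decomposition, and the standard argument (Keraani as adapted by Holmer--Roudenko, which is what the paper points to) proceeds elementarily: one interpolates the $S(\dot H^{1/2})$ norm between the endpoint $L^\infty_t L^{5/2}_x$ (note $\dot H^{1/2}(\mathbb R^5)\hookrightarrow L^{5/2}$ and $(\infty,\tfrac52)\in\Lambda_{1/2}$) and $H^1$-level Strichartz norms controlled by the uniform $H^1$ bound, reducing \eqref{remainder} to making $\limsup_n\|e^{it\Delta}\tilde W_n^M\|_{L^\infty_t L^{5/2}_x}$ small; then, if this quantity is bounded below, a H\"older-on-unit-balls/Sobolev argument produces $t_n,x_n$ such that the weak limit of $e^{it_n\Delta}\tilde W_n^{M}(\cdot+x_n)$ is nonzero with an explicit lower bound by a power of that norm. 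Supplying this interpolation-plus-localization argument (or an explicit citation of it) is what is needed to close the proof; the rest of your write-up would then stand as is.
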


\begin{remark}\label{parameter}
By refining the subsequence for each $j$ and using a standard
diagonalization  argument, we may assume that for each $j$ that the
sequence $t_{n}^{j}$ is convergent to some time in the compactified
time interval $[-\infty,+\infty]$. If $t_{n}^{j}$ converges to some
finite time $t^{j}\in(-\infty,+\infty)$, we may shift
$\tilde{\psi}^{j}$ by the linear propagator $e^{-it^{j}\Delta}$  to
assume without loss og generality that $t_{n}^{j}$ converges either
to $-\infty$, 0, or $+\infty$. If $t_{n}^{j}$ converges to 0, we may
absorb the error
$e^{-it_{n}^{j}\Delta}\tilde{\psi}^{j}-\tilde{\psi}^{j}$ to the
remainder $\tilde{W}_{n}^{M}$ without significantly affecting the
scattering size of the linear evolution of $\tilde{W}_{n}^{M}$ and
so assume, without loss of generality, in this case that
$t_{n}^{j}\equiv 0.$
\end{remark}

Since the   profile decomposition  corresponds to the linear equation
and there is no   difference in the linear terms  between the Hartree equation
 and the Schr\"{o}dinger equation, there is no essential difference in the proof as in
 \cite{nonradial} for the 3D cubic Schr\"{o}dinger equation, and one can find
 similar proof there. Furthermore, we
 have also the following energy expansion.

  \begin{lemma}\label{energy  expansion}(Energy pythagorean expansion)
 Under the same assumptions of Lemma \ref{lpd}, we have
 \begin{equation}\label{epe}
E(\phi_{n})=\sum_{j=1}^{M}E(e^{-it_{n}^{j}\Delta}\tilde{\psi}^{j})+E(\tilde{W}_{n}^{M})+o_n(1).
\end{equation}
\end{lemma}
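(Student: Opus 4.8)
The plan is to treat the kinetic and potential parts of $E$ separately, the kinetic part being already contained in Lemma~\ref{lpd}. Write $\phi_n^j(x)=e^{-it_n^j\Delta}\tilde\psi^j(x-x_n^j)$, so that $\phi_n=\sum_{j=1}^M\phi_n^j+\tilde W_n^M$. Since $e^{-it\Delta}$ is a Fourier multiplier of modulus one it preserves the $\dot H^1$ norm, and that norm is translation invariant, so the case $s=1$ of \eqref{hsexpansion} reads
$$\|\nabla\phi_n\|_2^2=\sum_{j=1}^M\|\nabla(e^{-it_n^j\Delta}\tilde\psi^j)\|_2^2+\|\nabla\tilde W_n^M\|_2^2+o_n(1).$$
Because $E$ is spatially translation invariant one has $E(\phi_n^j)=E(e^{-it_n^j\Delta}\tilde\psi^j)$, so \eqref{epe} will follow by subtracting one quarter of the quartic Pythagorean expansion
$$\|\phi_n\|_{L^V}^4=\sum_{j=1}^M\|\phi_n^j\|_{L^V}^4+\|\tilde W_n^M\|_{L^V}^4+o_n(1)\qquad(M\text{ fixed},\ n\to\infty)$$
from one half of the kinetic identity; thus it remains to establish the quartic expansion.

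For the quartic expansion I would expand $\|\,\cdot\,\|_{L^V}^4$ into a finite sum of quadrilinear forms $\int\!\!\int|x-y|^{-3}f(x)\overline{g(x)}\,h(y)\overline{k(y)}\,dx\,dy$ and estimate each by the Hardy--Littlewood--Sobolev and H\"older inequalities together with the Sobolev embedding $\dot H^{3/4}(\mathbb R^5)\hookrightarrow L^{20/7}(\mathbb R^5)$; this bounds such a form by $c\|f\bar g\|_{10/7}\|h\bar k\|_{10/7}$, and by the $\dot H^{1/2}$ and $\dot H^1$ cases of \eqref{hsexpansion} and interpolation each $\phi_n^j$ and each $\tilde W_n^M$ is bounded in $L^{20/7}$ uniformly in $n$. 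Writing $v_n=\sum_{j=1}^M\phi_n^j$, the diagonal forms reproduce exactly $\sum_j\|\phi_n^j\|_{L^V}^4+\|\tilde W_n^M\|_{L^V}^4$, while every remaining form either contains a genuine pointwise product $\phi_n^j\overline{\phi_n^k}$ with $j\ne k$ or $\phi_n^j\overline{\tilde W_n^M}$, or is one of the purely nonlocal ``separated'' forms $\int\!\!\int|x-y|^{-3}|\phi_n^j(x)|^2|\phi_n^k(y)|^2\,dx\,dy$ ($j\ne k$) and $\int\!\!\int|x-y|^{-3}|v_n(x)|^2|\tilde W_n^M(y)|^2\,dx\,dy$. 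So it suffices to show all of these tend to $0$ as $n\to\infty$ for fixed $M$.

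For this I would first invoke Remark~\ref{parameter} and assume each $t_n^j$ is either $\equiv0$ or $\to\pm\infty$. If $t_n^j\to\pm\infty$, dispersion gives $\|e^{-it_n^j\Delta}\tilde\psi^j\|_{20/7}\to0$ (true for Schwartz data from the $L^1$--$L^\infty$ bound, and for $\tilde\psi^j\in H^1\hookrightarrow\dot H^{3/4}$ by density, using $\|e^{-it\Delta}\tilde\psi^j\|_{20/7}\le c\|\tilde\psi^j\|_{\dot H^{3/4}}$), so every form involving such a profile vanishes and we may assume the surviving profiles have $t_n^j\equiv0$, i.e. $\phi_n^j=\tilde\psi^j(\cdot-x_n^j)$. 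For the pointwise products: if $j\ne k$ with both times zero, \eqref{divergence} forces $|x_n^j-x_n^k|\to\infty$, and approximating $\tilde\psi^j,\tilde\psi^k$ in $H^1$ by compactly supported functions (whose translates become disjointly supported) gives $\|\phi_n^j\overline{\phi_n^k}\|_{10/7}\to0$; for $\phi_n^j\overline{\tilde W_n^M}$, de-centering by $x_n^j$ turns the norm into $\|\tilde\psi^j\,\tilde W_n^M(\cdot+x_n^j)\|_{10/7}$, where by the construction of the decomposition $\tilde W_n^M(\cdot+x_n^j)\rightharpoonup0$ in $H^1$, hence $\to0$ strongly in $L^q_{\mathrm{loc}}$ for every $q<10/3$ by Rellich, so truncating $\tilde\psi^j$ to a large ball at the cost of an arbitrarily small $H^1$-error gives the claim. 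For the separated forms I would rewrite them as $\int|\phi_n^k(y)|^2(V\ast|\phi_n^j|^2)(y)\,dy$ and $\int|\tilde W_n^M(y)|^2(V\ast|v_n|^2)(y)\,dy$; since $V\ast|\phi_n^j|^2$ is a translate of the fixed function $V\ast|\tilde\psi^j|^2\in L^3(\mathbb R^5)$, the first vanishes because $|x_n^j-x_n^k|\to\infty$ (translates of an $L^3$ function by points going to infinity converge weakly to $0$, paired against the fixed $L^{3/2}$ density $|\tilde\psi^k|^2$), and the second vanishes after splitting $V\ast|\tilde\psi^j|^2$ into a large-ball piece and a small $L^3$-tail and using again the local strong convergence of $\tilde W_n^M(\cdot+x_n^j)$.

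The main obstacle is the combination of the profile--remainder forms and the purely nonlocal separated forms: these are the only places where one must use the \emph{internal} structure of the profile decomposition---the weak-null property of $\tilde W_n^M$ in the frame of each profile, which is exactly how the $\tilde\psi^j$ are extracted---rather than the black-box properties \eqref{divergence}--\eqref{hsexpansion}, and the separated forms carry no pointwise-product structure, so they are the feature genuinely specific to the nonlocal Hartree nonlinearity and must be handled through decay of the Newton-type potentials $V\ast|\tilde\psi^j|^2$ together with Rellich compactness rather than by a single H\"older estimate. A secondary but necessary point is keeping all Lebesgue exponents strictly subcritical (for instance pairing in $L^3$ against $L^{3/2}$) so that Rellich applies and the uniform-in-$n$ bounds hold; this is routine given $\dot H^{3/4}\hookrightarrow L^{20/7}$ and $H^1\hookrightarrow L^2\cap L^{10/3}$. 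Adding the diagonal contributions then gives the quartic expansion, and combining it with the kinetic identity proves \eqref{epe}.
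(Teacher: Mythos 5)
Your proposal is correct, and at the level of strategy it matches what the paper intends: the paper gives no proof of Lemma \ref{energy  expansion}, referring instead to \cite{holmer10}/\cite{nonradial} and noting (after Proposition \ref{p61}) that the energy expansion is obtained by first establishing the $L^V$-orthogonality \eqref{6.3}; like you, one subtracts a quarter of that quartic expansion from half of the $s=1$ case of \eqref{hsexpansion}. Where you genuinely diverge is in how the profile--remainder interactions are killed. The route the paper has in mind (and carries out in the nonlinear analogue inside the proof of Lemma \ref{l63}) stays within the black-box statement of Lemma \ref{lpd}: one enlarges $M$ to a large $M_1$ so that \eqref{remainder} makes $e^{it\Delta}\tilde W_n^{M_1}$ small in $S(\dot H^{\frac12})$, reads off smallness of $\|\tilde W_n^{M_1}\|_{L^{5/2}}$ at $t=0$ from the $L^\infty_tL^{5/2}_x$ component, interpolates with the uniform $H^1$ bound to get smallness in $L^{20/7}$ and hence in $L^V$, and reduces the remaining cross terms to profile--profile ones handled by \eqref{divergence}. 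You instead keep $M$ fixed and use the internal weak-null property $\tilde W_n^M(\cdot+x_n^j)\rightharpoonup 0$ of the Keraani construction together with Rellich compactness, plus decay/translation of the Newton-type potentials $V\ast|\tilde\psi^j|^2\in L^3$ for the separated nonlocal terms (the latter parallels the change-of-variables cross-term estimates in Lemma \ref{l63}). Both routes are sound: yours is more self-contained for fixed $M$ and never invokes \eqref{remainder}, at the price of appealing to a property of the construction not listed in the statement of Lemma \ref{lpd} (legitimate here, since one is proving the lemma, but worth stating explicitly); the paper's route uses only the stated conclusions of the profile decomposition and recycles machinery it needs anyway for Lemma \ref{l63}. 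Your exponent bookkeeping ($\dot H^{3/4}\hookrightarrow L^{20/7}$, HLS with $10/7$--$10/7$, pairing $L^3$ against $L^{3/2}$, all subcritical so Rellich applies) is consistent, and the dispersive disposal of profiles with $|t_n^j|\to\infty$ correctly removes the matching diagonal terms on both sides.
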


Similar to Keraani~\cite{keraani}~and~\cite{km},~we give the following definition of the nonlinear profile:
\begin{definition}
Let~$V$~be a solution to the linear Schr\"{o}dinger equation. We say
that~$U$~is the nonlinear profile associated to ~$(V,\{t_{n}\})$,~if
~$U$~is a solution to ~\eqref{1.1}~satisfying
$$\|(U-V)(-t_{n})\|_{H^1}\rightarrow0\ \ \  as\ \  n\rightarrow\infty.$$
\end{definition}

Note that, similar to the arguments  in \cite{km},  by the local
theory and Lemma \ref{wave operator}, there always exists a
nonlinear profile associated to a given~$(V,\{t_{n}\}).$
In fact, the nonlinear profile $U$ is obtained by solving \eqref{1.1} with
$U(-t_0,x)=V(-t_0,x)$, where $t_0=\lim_{n}t_n$. $V(-t_0,x)$ is a initial data if
$t_0$ is finite and an asymptotic state, otherwise.
 Thus
 for every~$j$,~there exists a solution $v^j$ to \eqref{1.1} associated to~$(\tilde{\psi}^{j},\{t_{n}^{j}\})$~
such that$$\|v^{j}(\cdot-x_{n}^{j},-t_{n}^{j})-e^{-it_{n}^{j}\Delta}\tilde{\psi}^{j}(\cdot-x_{n}^{j})\|_{H^{1}}\rightarrow0
\ \ \  as\ \  n\rightarrow\infty.$$

If we denote the solution to \eqref{1.1} with initial data $\psi$ by
$NLH(t)\psi$,
 by shifting the linear profile $\tilde{\psi}^{j}$ when necessary, we may denote  $v^{j}(-t_{n}^{j})$
as  $NLH(-t_{n}^{j})\psi^j$ with some $\psi^{j}\in H^1$. Thus using
the same method of replacing linear flows by nonlinear flows
 as in~\cite{radial}
we can get the following proposition:
\begin{proposition}\label{p61}
Let ~$\phi_{n}(x)$ ~be an uniformly bounded sequence in ~$H^{1}$.
There exists a subsequence of ~$\phi_{n}$, also denoted by
~$\phi_{n}$, profiles~ $\psi^{j}(x)$~ in ~$H^1$, and
parameters~$x_{n}^{j}$,~$t_{n}^{j}$~ so that for each~$M$,
\begin{equation}\label{pc}
\phi_{n}(x)=\sum_{j=1}^{M}NLH(-t_{n}^{j})\psi^{j}(x-x_{n}^{j})+W_{n}^{M}(x),
\end{equation}
 where as~$n\rightarrow \infty$~\\
$\bullet $ For each $j$ , either $t_{n}^{j}=0,$ $t_{n}^{j}\rightarrow+\infty$ or $t_{n}^{j}\rightarrow-\infty.$ \\
$\bullet $ If  $t_{n}^{j}\rightarrow+\infty,$  then $\|NLH(-t)\psi^{j}\|_{S(\dot{H}^{\frac{1}{2}};[0,\infty))}<\infty$ ;
If $t_{n}^{j}\rightarrow-\infty,$ then \\ $\|NLH(-t)\psi^{j}\|_{S(\dot{H}^{\frac{1}{2}};(-\infty,0])}<\infty$
\footnote{This property is obtained by
 solving an asymptotic problem similar to that in
 the proof of the existence of the wave operator. In fact, we obtain further that
$\|D^{\frac{1}{2}}NLH(-t)\psi^{j}\|_{S(L^{2};[0,\infty))}<\infty$ in the case of $t_{n}^{j}\rightarrow+\infty$,
and  a similar result for the  case $t_{n}^{j}\rightarrow -\infty$.  }.
\\
 $\bullet $ For~$j\neq k$ ,
 \begin{equation*}\label{4.1}
\lim_{n\rightarrow+\infty}(
|t_{n}^{j}-t_{n}^{k}|+|x_{n}^{j}-x_{n}^{k}|)=+\infty.
\end{equation*}
$\bullet $~~$NLH(t)W_{n}^{M}$~is global for~$M$~large enough with
 \begin{equation*}\label{4.2}
\lim_{M\rightarrow+\infty}[\lim_{n\rightarrow+\infty}\|NLH(t)W_{n}^{M}\|_{S(\dot{H}^{\frac{1}{2}})}]=0.
\end{equation*}
 We also have the ~$H^{s}$~Pythagorean decomposition: for fixed~$M$~and~$0\leq s\leq 1$,
\begin{equation}\label{6.1}
\|\phi_{n}\|_{\dot{H}^{s}}^{2}=\sum_{j=1}^{M}\|NLH(-t_{n}^{j})\psi^{j}\|_{\dot{H}^{s}}^{2}+\|W_{n}^{M}\|_{\dot{H}^{s}}^{2}+o_{n}(1),
\end{equation}
and, by energy conservation $E(NLH(-t_{n}^{j})\psi^{j})=E(\psi^{j})$,  the energy ~Pythagorean decomposition
\begin{equation}\label{6.2}
E(\phi_{n})=\sum_{j=1}^{M}E(\psi^{j})+E(W_{n}^{M})+o_{n}(1).
\end{equation}

\end{proposition}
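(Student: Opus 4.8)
The plan is to run the standard scheme of turning the linear profile decomposition of Lemma~\ref{lpd} into a nonlinear one by replacing linear flows with nonlinear flows. First I would apply Lemma~\ref{lpd} to the bounded sequence $\phi_n$, obtaining linear profiles $\tilde\psi^j$, time shifts $t_n^j$, space shifts $x_n^j$ and linear remainders $\tilde W_n^M$, and then invoke Remark~\ref{parameter} to normalize so that for each $j$ either $t_n^j\equiv 0$ or $t_n^j\to\pm\infty$ (when $t_n^j\to 0$ the error $e^{-it_n^j\Delta}\tilde\psi^j-\tilde\psi^j$ is absorbed into the remainder). For each $j$ I then pass to the associated nonlinear profile $v^j$, a solution of \eqref{1.1}: when $t_n^j\equiv 0$ it is simply $NLH(t)\tilde\psi^j$ from the local theory; when $t_n^j\to\pm\infty$ it is produced by solving the asymptotic (final--state) problem, i.e.\ by the wave--operator construction underlying Lemma~\ref{wave operator}, which simultaneously delivers the one--sided bound $\|NLH(-t)\psi^j\|_{S(\dot H^{1/2};[0,\infty))}<\infty$ (resp.\ on $(-\infty,0]$) and its $D^{1/2}$ companion. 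Writing $\psi^j:=v^j(0)$, so that $NLH(-t_n^j)\psi^j=v^j(-t_n^j)$, the defining property of the nonlinear profile is
\[
\bigl\|NLH(-t_n^j)\psi^j-e^{-it_n^j\Delta}\tilde\psi^j\bigr\|_{H^1}\longrightarrow 0\qquad(n\to\infty).
\]

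Next I would simply \emph{define} the new remainder by $W_n^M:=\phi_n-\sum_{j=1}^M NLH(-t_n^j)\psi^j(\cdot-x_n^j)$, so that \eqref{pc} holds by construction, and compare with the linear decomposition: since only the finitely many indices $j\le M$ intervene, the displayed convergence gives $\|W_n^M-\tilde W_n^M\|_{H^1}\to 0$ as $n\to\infty$ for each fixed $M$. The pairwise divergence of $(t_n^j,x_n^j)$ is then inherited verbatim from \eqref{divergence}. For the $\dot H^s$ identity \eqref{6.1} I would start from \eqref{hsexpansion}, use that $e^{it\Delta}$ is unitary on $\dot H^s$, and insert the two $o_n(1)$ replacements $\|NLH(-t_n^j)\psi^j\|_{\dot H^s}=\|\tilde\psi^j\|_{\dot H^s}+o_n(1)$ and $\|W_n^M\|_{\dot H^s}=\|\tilde W_n^M\|_{\dot H^s}+o_n(1)$; the energy identity \eqref{6.2} follows the same way from Lemma~\ref{energy  expansion}, the continuity of $E$ on bounded subsets of $H^1$, and the conservation law $E(NLH(-t_n^j)\psi^j)=E(\psi^j)$.

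It then remains to show that $NLH(t)W_n^M$ is global with vanishing scattering norm in the iterated--limit sense. Since $\|f-g\|_{H^1}\to0$ forces $\|e^{it\Delta}(f-g)\|_{S(\dot H^{1/2})}\to0$ by the Strichartz estimate and Sobolev embedding, the convergence $\|W_n^M-\tilde W_n^M\|_{H^1}\to0$ together with the linear--remainder bound \eqref{remainder} gives $\lim_{M\to\infty}\lim_{n\to\infty}\|e^{it\Delta}W_n^M\|_{S(\dot H^{1/2})}=0$. Because $\|W_n^M\|_{\dot H^{1/2}}$ is bounded by a constant $A$ independent of $M$ and $n$ (e.g.\ by the $s=1/2$ case of \eqref{6.1}), for $M$ and $n$ large the hypothesis of the small--data Lemma~\ref{sd} is met, so $NLH(t)W_n^M$ is global and $\|NLH(t)W_n^M\|_{S(\dot H^{1/2})}\le 2\|e^{it\Delta}W_n^M\|_{S(\dot H^{1/2})}$, which yields the claimed smallness.

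The part I expect to be the real obstacle is the construction of the nonlinear profiles attached to the indices with $t_n^j\to\pm\infty$ and their one--sided Strichartz bounds: one must solve the asymptotic problem in $H^1$ with enough quantitative control to guarantee both $\|NLH(-t_n^j)\psi^j-e^{-it_n^j\Delta}\tilde\psi^j\|_{H^1}\to0$ and the finiteness of $\|NLH(-t)\psi^j\|_{S(\dot H^{1/2};[0,\infty))}$ (and of $\|D^{1/2}NLH(-t)\psi^j\|_{S(L^2;[0,\infty))}$). This is precisely where the argument uses the inhomogeneous (Kato) Strichartz estimate \eqref{inhomo} and a fixed--point scheme near temporal infinity, exactly as in the proof of Lemma~\ref{wave operator}; everything else reduces to bookkeeping with the two Pythagorean identities and a single application of small--data theory.
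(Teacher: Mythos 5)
Your proposal is correct and follows essentially the same route as the paper, which itself only sketches the argument: apply the linear decomposition of Lemma~\ref{lpd} with the normalization of Remark~\ref{parameter}, replace each linear flow by the nonlinear profile obtained from the local theory or from the asymptotic (wave--operator) problem as in Lemma~\ref{wave operator}, define $W_n^M$ by the difference, and transfer the divergence, Pythagorean, and remainder-smallness properties using $\|W_n^M-\tilde W_n^M\|_{H^1}\to 0$ together with Lemma~\ref{energy  expansion}, Strichartz estimates, and the small--data Lemma~\ref{sd}. Your write-up is in fact more explicit than the paper's, which defers the details to the standard ``replacing linear flows by nonlinear flows'' argument of the cited references.
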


\begin{remark}
As is stated in \cite{holmer10},   \eqref{6.2} was proven by establishing the following orthogonal decomposition
first
\begin{equation}\label{6.3}
\|\phi_{n}\|_{L^V}^{4}=\sum_{j=1}^{M}\|NLH(-t_{n}^{j})\psi^{j}\|_{L^V}^{4}+\|W_{n}^{M}\|_{L^V}^{4}+o_{n}(1),
\end{equation}
and we will find a similar one in the proof of Lemma \ref{l63}.
\end{remark}

The next perturbation  lemma is essential to get our main theorem .
\begin{lemma}\label{l62}
(Long time perturbation theory) For any given $A\gg 1$, there exist $\epsilon_0=\epsilon_0(A)\ll 1$ and
$c=c(A)$ such that the following holds: Fix ~$T>0$~.Let $u=u(x,t)\in L^{\infty}([0,T];H^{1})$ solves
$$iu_{t}+\Delta u+(V\ast|u|^{2})u=0$$
on~$[0,T]$. Let $\tilde{u}=\tilde{u}(x,t)\in L^{\infty}([0,T];H^{1})$  and set
$$e\equiv i\tilde{u}_{t}+\Delta \tilde{u}+(V\ast|\tilde{u}|^{2})\tilde{u}.$$ For each $\epsilon\leq\epsilon_0,$
if $$\|\tilde{u}\|_{S(\dot{H}^{\frac{1}{2}};[0,T])}\leq A,\ \   \|e\|_{S'(\dot{H}^{-\frac{1}{2}};[0,T])}\leq \epsilon\ \ \ \
and\ \ \ \  \|e^{it\Delta}(u(0)-\tilde{u}(0)\|_{S(\dot{H}^{\frac{1}{2}};[0,T])}\leq \epsilon,$$
then$$\|u-\tilde{u}\|_{S(\dot{H}^{\frac{1}{2}};[0,T])}\leq c(A)\epsilon.$$
\end{lemma}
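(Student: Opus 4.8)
The plan is to run the now-standard long-time perturbation argument of the Kenig--Merle school, adapted to the Hartree nonlinearity and to the exotic $\dot H^{1/2}$-Strichartz spaces already in use (via \eqref{inhomo} and the estimates of \cite{gao}). Write $N(f)=(V\ast|f|^2)f$ and set $w=u-\tilde u$. Subtracting the equation for $\tilde u$ from that for $u$, one sees $w$ solves
\[
i w_t + \Delta w + \big(N(u) - N(\tilde u)\big) = -e, \qquad w(0) = u(0) - \tilde u(0),
\]
and hence, by Duhamel's formula,
\[
w(t) = e^{it\Delta} w(0) + i\int_0^t e^{i(t-s)\Delta}\big[\big(N(u)-N(\tilde u)\big)(s) + e(s)\big]\,ds .
\]
First I would record the trilinear difference estimate: on any time interval $I$,
\[
\big\|N(u)-N(\tilde u)\big\|_{S'(\dot H^{-1/2};I)} \le c\Big(\|\tilde u\|_{S(\dot H^{1/2};I)}^2 + \|w\|_{S(\dot H^{1/2};I)}^2\Big)\|w\|_{S(\dot H^{1/2};I)} ,
\]
which is the same H\"older / Hardy--Littlewood--Sobolev / fractional Leibniz computation underlying the small-data theory of \cite{gao} (the nonlinearity is cubic, so its difference is a sum of trilinear terms each carrying at least one factor of $w$, and one writes $u=w+\tilde u$ in the remaining factors). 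Combined with $\|e^{it\Delta}\phi\|_{S(\dot H^{1/2})}\le c\|\phi\|_{\dot H^{1/2}}$ and the Kato inhomogeneous estimate \eqref{inhomo}, this yields, for any subinterval $I=[a,b]\subset[0,T]$,
\[
\|w\|_{S(\dot H^{1/2};I)} \le \big\|e^{i(t-a)\Delta}w(a)\big\|_{S(\dot H^{1/2};I)} + c\Big(\|\tilde u\|_{S(\dot H^{1/2};I)}^2 + \|w\|_{S(\dot H^{1/2};I)}^2\Big)\|w\|_{S(\dot H^{1/2};I)} + c\,\|e\|_{S'(\dot H^{-1/2};I)} .
\]

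Next I would partition $[0,T]$ into $J=J(A,\delta)$ consecutive intervals $I_k=[t_k,t_{k+1}]$, $0=t_0<\dots<t_J=T$, on each of which $\|\tilde u\|_{S(\dot H^{1/2};I_k)}\le\delta$; this is standard since $\|\tilde u\|_{S(\dot H^{1/2};[0,T])}\le A$, and $\delta=\delta(A)$ will be a small parameter fixed below. On the first interval $\|e^{it\Delta}w(0)\|_{S(\dot H^{1/2};I_0)}\le\epsilon$, so if $\delta$ (and $\epsilon_0$) are small enough the displayed inequality closes by the usual continuity/bootstrap argument, giving $\|w\|_{S(\dot H^{1/2};I_0)}\le C_0\epsilon$. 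For the induction, I would express $w(t_{k+1})$ via Duhamel on $I_k$ and apply a Strichartz estimate to its linear evolution, obtaining
\[
\big\|e^{i(t-t_{k+1})\Delta}w(t_{k+1})\big\|_{S(\dot H^{1/2};[t_{k+1},T])} \le \big\|e^{i(t-t_k)\Delta}w(t_k)\big\|_{S(\dot H^{1/2};[t_k,T])} + c\big(\delta^2 + \|w\|_{S(\dot H^{1/2};I_k)}^2\big)\|w\|_{S(\dot H^{1/2};I_k)} + c\,\|e\|_{S'(\dot H^{-1/2};I_k)} .
\]
Thus if $\|e^{i(t-t_k)\Delta}w(t_k)\|_{S(\dot H^{1/2})}\le C_k\epsilon$ and $\delta,\epsilon_0$ are chosen small depending on $C_k$, the subinterval inequality gives $\|w\|_{S(\dot H^{1/2};I_k)}\le 2C_k\epsilon$, which in turn gives $\|e^{i(t-t_{k+1})\Delta}w(t_{k+1})\|_{S(\dot H^{1/2})}\le C_{k+1}\epsilon$ for a new constant $C_{k+1}=C_{k+1}(C_k)$. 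Since $J$ depends only on $A$ (once $\delta(A)$ is fixed), iterating $J$ times and summing the $q$-th powers over the $I_k$ produces $\|w\|_{S(\dot H^{1/2};[0,T])}\le c(A)\epsilon$, which is the assertion; $\epsilon_0=\epsilon_0(A)$ is taken below all the finitely many thresholds encountered.

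The \emph{main obstacle} is purely the bookkeeping of this finite iteration: one must verify that the number $J$ of subintervals and every smallness threshold and constant $C_k$ can be bounded in terms of $A$ alone, independently of $T$ and of the particular $u,\tilde u$. The only place where the Hartree structure genuinely enters is the difference estimate in $S'(\dot H^{-1/2})$, where the half-derivative must be distributed and the nonlocal factor $V\ast(\cdot)$ controlled by Hardy--Littlewood--Sobolev together with a fractional Leibniz rule so that all resulting factors land in admissible $\dot H^{\pm1/2}$-Strichartz pairs; but this is exactly the estimate already established (in the $u=\tilde u$, $e=0$ form) in the local and small-data theory of \cite{gao}, so it may be quoted, and the remainder of the proof is the continuity/induction scheme above.
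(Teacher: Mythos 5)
Your proposal is correct and follows essentially the same route as the paper: the same decomposition $w=u-\tilde u$, the same splitting of $[0,T]$ into finitely many intervals on which $\|\tilde u\|_{S(\dot H^{1/2})}\le\delta$, the same trilinear estimates via the Kato inhomogeneous Strichartz estimate with H\"older and Hardy--Littlewood--Sobolev (the paper carries these out with the explicit pairs $(\tfrac{24}{13},\tfrac{12}{7})$ and $(\tfrac{24}{5},\tfrac{60}{19}),(8,\tfrac{20}{7})$, with no fractional Leibniz needed at this level), and the same finite iteration propagating $e^{i(t-t_k)\Delta}w(t_k)$ with all constants and $\epsilon_0$ depending only on the number of intervals $N(A)$.
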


\begin{proof}
Define $w=u-\tilde{u}$. Then $w$ solves the equation
$$iw_t+\Delta w+(V\ast|w+\tilde{u}|^2)w+(V\ast|w+\tilde{u}|^2)\tilde{u}-(V\ast|\tilde{u}|^2)\tilde{u}+e=0.$$
That is
\begin{align}\label{2.5}
&iw_t+\Delta w+(V\ast|w|^2)w+(V\ast (\bar{w}\tilde{u}))w+(V\ast (w\bar{\tilde{u}}))w\\ \nonumber
&+(V\ast|w|^2)\tilde{u}+(V\ast|\tilde{u}|^2)w+(V\ast (\bar{w}\tilde{u}))\tilde{u}+(V\ast (w\bar{\tilde{u}}))\tilde{u}+e=0.
\end{align}
Since $\|\tilde{u}\|_{S(\dot{H}^{\frac{1}{2}};[0,T])}\leq A,$ we can
divide $[0,T]$ into $N=N(A)$ intervals $I_j=[t_j,t_{j+1}),$  such
that, for each $0\leq j\leq N-1$,
  $\|\tilde{u}\|_{S(\dot{H}^{\frac{1}{2}};I_j)}<\delta$ with the sufficiently small $\delta$ to be specified later.
 The integral equation of \eqref{2.5} with initial time $t_j$ is
\begin{align}\label{2.6}
w(t)=e^{i(t-t_j)\Delta}w(t_j)+i\int_{t_j}^te^{i(t-s)\Delta}W(\cdot,s)ds,
\end{align}
where
\begin{align*}
W=&(V\ast|w|^2)w+(V\ast (\bar{w}\tilde{u}))w+(V\ast (w\bar{\tilde{u}}))w\\
&+(V\ast|w|^2)\tilde{u}+(V\ast|\tilde{u}|^2)w+(V\ast (\bar{w}\tilde{u}))\tilde{u}+(V\ast (w\bar{\tilde{u}}))\tilde{u}+e.
\end{align*}
Applying the Kato Strichartz estimate \eqref{inhomo} on $I_j$ we have
\begin{align}\label{2.7}
\|w\|_{S(\dot{H}^{\frac{1}{2}};I_j)}&\leq \|e^{i(t-t_j)\Delta}w(t_j)\|_{S(\dot{H}^{\frac{1}{2}};I_j)}
+c\|(V\ast|w|^2)w\|_{L_{I_j}^{\frac{24}{13}}L_x^{\frac{12}{7}}}\\ \nonumber
&+c\|(V\ast (\bar{w}\tilde{u}))w\|_{L_{I_j}^{\frac{24}{13}}L_x^{\frac{12}{7}}}
+c\|(V\ast (w\bar{\tilde{u}}))w\|_{L_{I_j}^{\frac{24}{13}}L_x^{\frac{12}{7}}}
+c\|(V\ast|w|^2)\tilde{u}\|_{L_{I_j}^{\frac{24}{13}}L_x^{\frac{12}{7}}}\\ \nonumber
&+c\|(V\ast|\tilde{u}|^2)w\|_{L_{I_j}^{\frac{24}{13}}L_x^{\frac{12}{7}}}
+c\|(V\ast (\bar{w}\tilde{u}))\tilde{u}\|_{L_{I_j}^{\frac{24}{13}}L_x^{\frac{12}{7}}}
+c\|(V\ast (w\bar{\tilde{u}}))\tilde{u}\|_{L_{I_j}^{\frac{24}{13}}L_x^{\frac{12}{7}}}.
\end{align}
In fact, we can easily check that $(\frac{24}{13},\frac{12}{7})\in\Lambda'_{\frac{1}{2}}$
and $(\frac{24}{5},\frac{60}{19}),(8,\frac{20}{7})\in\Lambda_{\frac{1}{2}}.$
And by Hardy-Littlewood-Sobolev inequalities and H\"older estimates we have
\begin{align*}
\|(V\ast|\tilde{u}|^2)w\|_{L_{I_j}^{\frac{24}{13}}L_x^{\frac{12}{7}}}\leq
\|\tilde{u}\|_{L_{I_j}^{\frac{24}{5}}L_x^{\frac{60}{19}}}^2\|w\|_{L_{I_j}^{8}L_x^{\frac{20}{7}}}
\leq\|\tilde{u}\|_{S(\dot{H}^{\frac{1}{2}};I_j)}^2\|w\|_{S(\dot{H}^{\frac{1}{2}};I_j)}
\leq \delta^2\|w\|_{S(\dot{H}^{\frac{1}{2}};I_j)},
\end{align*}
\begin{align*}
\|(V\ast|w|^2)\tilde{u}\|_{L_{I_j}^{\frac{24}{13}}L_x^{\frac{12}{7}}}\leq
\|w\|_{L_{I_j}^{\frac{24}{5}}L_x^{\frac{60}{19}}}^2\|\tilde{u}\|_{L_{I_j}^{8}L_x^{\frac{20}{7}}}
\leq \delta\|w\|^2_{S(\dot{H}^{\frac{1}{2}};I_j)}.
\end{align*}
Similarly, we can estimate other terms in \eqref{2.7} and get
\begin{align}\label{2.8}
\|w\|_{S(\dot{H}^{\frac{1}{2}};I_j)}\leq &\|e^{i(t-t_j)\Delta}w(t_j)\|_{S(\dot{H}^{\frac{1}{2}};I_j)}
+c\delta^2\|w\|_{S(\dot{H}^{\frac{1}{2}};I_j)}\\ \nonumber
&+c\delta\|w\|^2_{S(\dot{H}^{\frac{1}{2}};I_j)}
+c\|w\|_{S(\dot{H}^{\frac{1}{2}};I_j)}^3+c\|e\|_{S'(\dot{H}^{-\frac{1}{2}};I_j)}\\ \nonumber
\leq&\|e^{i(t-t_j)\Delta}w(t_j)\|_{S(\dot{H}^{\frac{1}{2}};I_j)}
+c\delta^2\|w\|_{S(\dot{H}^{\frac{1}{2}};I_j)}\\ \nonumber
&+c\delta\|w\|^2_{S(\dot{H}^{\frac{1}{2}};I_j)}
+c\|w\|_{S(\dot{H}^{\frac{1}{2}};I_j)}^3+c\epsilon .
\end{align}
Now if  $\delta\leq\min(1,\frac{1}{6c})$ and
\begin{align}\label{2.9}
\|e^{i(t-t_j)\Delta}w(t_j)\|_{S(\dot{H}^{\frac{1}{2}};I_j)}+c\epsilon  \leq\min(1,\frac{1}{2\sqrt{6c}}),
\end{align}
we obtain
\begin{align}\label{2.10}
\|w\|_{S(\dot{H}^{\frac{1}{2}};I_j)}\leq 2\|e^{i(t-t_j)\Delta}w(t_j)\|_{S(\dot{H}^{\frac{1}{2}};I_j)}
+2c\epsilon .
\end{align}
Next, taking $t=t_j$ in \eqref{2.6} and applying
$e^{i(t-t_{j+1})\Delta}$ to both sides, we obtain
\begin{align}\label{2.11}
e^{i(t-t_{j+1})\Delta}w(t_{j+1})=e^{i(t-t_j)\Delta}w(t_j)+i\int_{t_j}^{t_{j+1}}e^{i(t-s)\Delta}W(\cdot,s)ds.
\end{align}
Note that the Duhamel integral is confined to $I_j$,  similar to \eqref{2.8} we have the estimate
\begin{align*}
\|e^{i(t-t_{j+1})\Delta}w(t_{j+1})\|_{S(\dot{H}^{\frac{1}{2}};[0,T])}
\leq&\|e^{i(t-t_j)\Delta}w(t_j)\|_{S(\dot{H}^{\frac{1}{2}};[0,T])}
+c\delta^2\|w\|_{S(\dot{H}^{\frac{1}{2}};I_j)}\\ \nonumber
&+c\delta\|w\|_{S(\dot{H}^{\frac{1}{2}};I_j)}
+c\|w\|_{S(\dot{H}^{\frac{1}{2}};I_j)}^3+c\epsilon .
\end{align*}
Then  \eqref{2.9} and \eqref{2.10} imply
\begin{align*}
\|e^{i(t-t_{j+1})\Delta}w(t_{j+1})\|_{S(\dot{H}^{\frac{1}{2}};[0,T])}
\leq&2\|e^{i(t-t_j)\Delta}w(t_j)\|_{S(\dot{H}^{\frac{1}{2}};[0,T])}
+2c\epsilon .
\end{align*}
Now beginning with $j=0$  we get by iteration
\begin{align*}
\|e^{i(t-t_{j})\Delta}w(t_{j})\|_{S(\dot{H}^{\frac{1}{2}};[0,T])}
\leq&2^j\|e^{i(t-t_0)\Delta}w(t_0)\|_{S(\dot{H}^{\frac{1}{2}};[0,T])}
+(2^j-1)2c\epsilon \leq2^{j+2}c\epsilon .
\end{align*}
Since the second part of \eqref{2.9} is needed for each $I_j$, $0\leq j\leq N-1$, we require
\begin{align}\label{2.12}
2^{N+2}c\epsilon_0\leq\min(1,\frac{1}{2\sqrt{6c}}).
\end{align}
 Recall that, $\delta$ is an absolute constant
satisfying \eqref{2.9};   the number of intervals $N$ is determined
by the given $A$; and then by \eqref{2.12} $\epsilon_0$ was
determined by $N=N(A)$. Thus the iteration complete our proof.

\end{proof}

Note that from the proof above the parameters in Lemma \ref{l62} is
independent of~$T$. As is stated in~\cite{holmer10}, besides
the~$H^{1}$~asymptotic orthogonality~\eqref{6.1}~ at~$t=0$~, this
property can be extended  to the nonlinear flow for~$0\leq t\leq
T$~as an application of Lemma \ref{l62}  with a constant ~$A=A(T)$~
depending on~$T$ ( but only through ~$A$). As for the Hartree
equation \eqref{1.1}, we will show  a similar result:

\begin{lemma}\label{l63}
 Suppose ~$\phi_{n}(x)$ ~be a uniformly bounded
sequence in ~$H^{1}$~. Fix any time~$0<T<\infty$~. Suppose that~$u_{n}(t)\equiv NLH(t)\phi_{n}$~exists up
to time~$T$~for all~$n$~and
$$\lim_{n\rightarrow\infty}\|\nabla u_{n}(t)\|_{ L^{\infty}([0,T];L^{2})}<\infty.$$
Let ~$W_{n}^{M}(t)\equiv NLH(t)W_{n}^{M}.$~Then, for all~$j$,
 ~$v^{j}(t)\equiv NLH(t)\psi^{j}$~exist up to time~$T$~and for all~$t\in[0,T],$~
\begin{equation}\label{6.4}
\|\nabla u_{n}\|_{2}^{2}=\sum_{j=1}^{M}\|\nabla
v^{j}(t-t_{n}^{j})\|_{2}^{2}+\|\nabla
W_{n}^{M}(t)\|_{2}^{2}+o_{n}(1),
\end{equation}
where, ~$o_{n}(1)\rightarrow 0$~uniformly for~$0\leq t\leq T$.

\end{lemma}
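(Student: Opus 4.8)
The plan is to bootstrap the linear Pythagorean expansion \eqref{6.1} (at $s=1$, so for $\|\nabla\cdot\|_2^2$) from $t=0$ to all $t\in[0,T]$ by transferring it along the nonlinear flow, using the long-time perturbation Lemma \ref{l62} as the main engine. First I would fix $M$ and observe that the hypothesis $\lim_n\|\nabla u_n\|_{L^\infty([0,T];L^2)}<\infty$ together with mass conservation gives a uniform $H^1$ bound on $u_n(t)$ on $[0,T]$; by the $\dot H^{1/2}$ interpolation and the local theory this yields a uniform Strichartz bound $\|u_n\|_{S(\dot H^{1/2};[0,T])}\le A$ for some $A=A(T)$ (this is where the $T$-dependence enters, only through $A$). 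Next, using the profile decomposition \eqref{pc} applied to the data $\phi_n$, and the smallness \eqref{4.2}-type property of $NLH(t)W_n^M$ for $M$ large, I would form the approximate solution
$$\tilde u_n(t)=\sum_{j=1}^{M} v^j(t-t_n^j)(\cdot-x_n^j)+W_n^M(t),$$
where $v^j(t)=NLH(t)\psi^j$ and $W_n^M(t)=NLH(t)W_n^M$. Each $v^j$ exists on $[0,T]$ (after the time shift $t-t_n^j$, which for the non-zero cases $t_n^j\to\pm\infty$ lands in a region where the global Strichartz bound from the footnote of Proposition \ref{p61} applies, and for $t_n^j=0$ follows from the uniform $H^1$ bound obtained by subtracting the other profiles), and $W_n^M$ is global for $M$ large.

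The heart of the argument is to check that $\tilde u_n$ is an approximate solution of \eqref{1.1} with error small in $S'(\dot H^{-1/2};[0,T])$. Plugging $\tilde u_n$ into the equation, the error $e_n=i\partial_t\tilde u_n+\Delta\tilde u_n+(V\ast|\tilde u_n|^2)\tilde u_n$ consists of the cross terms in the Hartree nonlinearity: differences between $(V\ast|\tilde u_n|^2)\tilde u_n$ and $\sum_j (V\ast|v^j|^2)v^j+(V\ast|W_n^M|^2)W_n^M$. By expanding the convolution and using the pairwise divergence \eqref{divergence} of $(t_n^j,x_n^j)$ together with Hardy–Littlewood–Sobolev and Hölder estimates (exactly the exponent bookkeeping already carried out in the proof of Lemma \ref{l62}), each genuinely mixed term tends to $0$ in the relevant Strichartz norm as $n\to\infty$ (first) and $M\to\infty$ (second); the contribution of $W_n^M$ inside mixed terms is controlled by its small scattering norm. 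I also need $\|e^{it\Delta}(u_n(0)-\tilde u_n(0))\|_{S(\dot H^{1/2};[0,T])}\to 0$, which is essentially the statement that $\phi_n-\tilde u_n(0)$ is small in the linear-evolution sense — this follows from \eqref{pc} and Remark \ref{parameter}, absorbing the $t_n^j=0$ normalization into $W_n^M$. With these inputs, Lemma \ref{l62} gives $\|u_n-\tilde u_n\|_{S(\dot H^{1/2};[0,T])}\le c(A)\epsilon$, and combined with the equation one upgrades this to $\|\nabla(u_n-\tilde u_n)\|_{L^\infty([0,T];L^2)}\to 0$ (first in $n$, then in $M$) via Strichartz/Duhamel and the uniform bounds.

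Finally, having $u_n(t)\approx \tilde u_n(t)$ in $H^1$ uniformly on $[0,T]$, I apply the \emph{linear} $\dot H^1$ Pythagorean expansion to the family $\{v^j(t-t_n^j)(\cdot-x_n^j)\}_j$ and $W_n^M(t)$: the space-time translation parameters still satisfy the divergence property \eqref{divergence} for each fixed $t$ (translating all times by $t$ does not affect $|t_n^j-t_n^k|\to\infty$), so the $\dot H^1$ norms decouple,
$$\Big\|\nabla\tilde u_n(t)\Big\|_2^2=\sum_{j=1}^M\|\nabla v^j(t-t_n^j)\|_2^2+\|\nabla W_n^M(t)\|_2^2+o_n(1),$$
with $o_n(1)$ uniform in $t\in[0,T]$ because all profile norms and remainder norms are uniformly bounded there. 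Replacing $\tilde u_n$ by $u_n$ up to the $o_n(1)$ error from the perturbation step yields \eqref{6.4}. The main obstacle is the uniformity in $t$: one must ensure that every smallness estimate — the perturbation error, the $W_n^M$ scattering norm, and the cross-term decay — is uniform on the whole interval $[0,T]$ and not just pointwise in $t$, which is exactly why Lemma \ref{l62} is formulated with $T$-independent constants and why the hypothesis controls $\|\nabla u_n\|_{L^\infty([0,T];L^2)}$.
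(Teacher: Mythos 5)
Your plan goes through the right intermediate steps (approximate solution built from the nonlinear profiles, smallness of the cross terms in dual Strichartz via the pairwise divergence, Lemma \ref{l62}), but the final step contains a genuine gap. You claim that for each fixed $t\in[0,T]$ the $\dot H^1$ norms of $\{v^j(t-t_n^j)(\cdot-x_n^j)\}_j$ and of $W_n^M(t)=NLH(t)W_n^M$ decouple ``by the linear Pythagorean expansion, since the parameters still diverge.'' That is precisely the nontrivial content of the lemma and it does not follow from parameter divergence alone. The orthogonality in \eqref{6.1}/\eqref{hsexpansion} at $t=0$ uses the way the remainder is constructed (translated, time-shifted remainders converge weakly to $0$); after nonlinear evolution to a fixed positive time, $W_n^M(t)$ has no such weak-nullity property, and its only smallness is in the space-time scattering norm $S(\dot H^{1/2})$, which does not control the fixed-time overlap $\int\nabla v^j(t-t_n^j)(x-x_n^j)\cdot\overline{\nabla W_n^M(t)}\,dx$ (nor, for that matter, $\|W_n^M(t)\|_{L^p}$ at a fixed $t$). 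The same issue undermines your claim that the $o_n(1)$ is uniform in $t$ ``because the norms are bounded.'' This is exactly why the paper never attempts a direct kinetic-energy decoupling: it proves instead the potential-term decoupling \eqref{6.6} (applying the long-time perturbation lemma twice --- once comparing $u_n$ with $\sum_{j\le M_1}v^j(t-t_n^j)$, and once comparing $W_n^M(t)$ with the tail sum $\sum_{j=M+1}^{M_1}v^j(t-t_n^j)$), and then combines \eqref{6.6} with the energy Pythagorean expansion \eqref{6.2} and energy conservation, i.e.\ \eqref{6.7}, to read off \eqref{6.4} from $E=\frac12\|\nabla\cdot\|_2^2-\frac14\|\cdot\|_{L^V}^4$.

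Two secondary problems: (i) the existence of $v^j$ on all of $[0,T]$ for the non-scattering profiles with $t_n^j=0$ is asserted ``by subtracting the other profiles,'' which is circular --- you need the decomposition at time $t$ to bound $v^j(t)$, but you need $v^j$ to exist to run the comparison; the paper handles this with the bootstrap time $\widetilde T=\min_j T^j$ (stopping time where $\|\nabla v^j\|_2\le 2B$) and a continuity argument using the conclusion itself. (ii) Your proposed upgrade $\|\nabla(u_n-\tilde u_n)\|_{L^\infty([0,T];L^2)}\to 0$ is stronger than what Lemma \ref{l62} gives (it is an $S(\dot H^{1/2})$ statement) and is not proved in the paper either; making it rigorous would require derivative-level Strichartz control of all error terms uniformly in $M$, which your sketch does not supply. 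As written, the argument therefore does not establish \eqref{6.4}.
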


\begin{proof}
Let $M_{0}$ be such that for $M\geq M_{0}$ and for $\delta_{sd}$ in Lemma \ref{sd},
we have $$\|NLH(t)W_{n}^{M_{}}\|_{S(\dot{H}^{\frac{1}{2}})}\leq \delta_{sd}/2$$
and $\|v^{j}\|_{S(\dot{H}^{\frac{1}{2}})}\leq \delta_{sd}$ for $j>M_{0}.$
Reorder the first $M_{0}$
profiles and introduce an index~$M_{2}$,~$0\leq M_{2}\leq M_{0}$, such that\\
$\bullet$~For each~$0\leq j\leq M_{2}$~we have~$t_{n}^{j}=0$~(There is no ~$j$~in this case if~$M_{2}=0$).\\
$\bullet$~For each~$M_{2}+1\leq j\leq M_{0}$~we
have~$|t_{n}^{j}|\rightarrow\infty.$ (There is no ~$j$~in this case
if $M_{2}=M_{0}$).

By definition of $M_{0}$,
 $v^{j}(t)$ for $j>M_{0}$ scatters in both time directions.
We claim that for fixed  $T$ and $M_{2}+1\leq j\leq M_{0}$, ~$\|
v^{j}(t-t_{n}^{j})\|_{S(\dot{H}^{\frac{1}{2}};[0,T])}\rightarrow 0$~
as~$n\rightarrow\infty$. Indeed, take the case
~$t_{n}^{j}\rightarrow+\infty$~for example. By
Proposition~\ref{p61}, ~$\|
v^{j}(-t)\|_{S(\dot{H}^{\frac{1}{2}};[0,\infty))}<\infty$. Then
for~$q<\infty$, ~$\| v^{j}(-t)\|_{L^{q}([0,\infty);L^{r})}<\infty$~
implies ~$\| v^{j}(t-t_{n}^{j})\|_{L^{q}([0,T];L^{r})}\rightarrow
0.$~ On the other hand, since ~$v^{j}(t)$~in
Proposition~\ref{p61}~is constructed by the existence of wave
operators which converge in ~$H^{1}$~to a linear flow at ~$-\infty$,
then the ~$L^{\frac{5}{2}}$~ decay of the linear flow implies
immediately that
 ~$\| v^{j}(t-t_{n}^{j})\|_{L^{\infty}([0,T];L^{\frac{5}{2}})}\rightarrow 0.$
 Similarly, we can obtain further that for $M_{2}+1\leq j\leq M_{0}$,
 $\| D^{\frac{1}{2}}v^{j}(t-t_{n}^{j})\|_{S(L^{2};[0,T])}\rightarrow 0$ as $n\rightarrow+\infty$.

 Let~$B=\max\{1,\lim_{n}\|\nabla u_{n}\|_{L^{\infty}([0,T];L^{2})}\}$. For each~$1\leq j\leq M_{2}$, define~$T^{j}\leq T$~to be
 the maximal forward time on which ~$\|\nabla v^{j}\|_{L^{\infty}([0,T^{j}];L^{2})}\leq2B.$
 ~Let~$\widetilde{T}=\min_{1\leq j\leq M_{2}}T^{j}$, and if~$M_{2}=0,$~just take~$\widetilde{T}=T.$~
 Note that if we have proved ~\eqref{6.4}~holds for ~$T=\widetilde{T}$~, then by definition of~$T^{j}$, using the continuity arguments,
 it follows from~\eqref{6.4}~ that for each~$1\leq j\leq M_{2},$~we have~$T^{j}=T.$~Hence~$\widetilde{T}=T.$~
 Thus, for the remainder of the proof, we just work on~$[0,\widetilde{T}].$~

 For each $1\leq j\leq M_{2}$ ,
~$\| v^{j}\|_{L^{\infty}([0,\widetilde{T}];L^{2})}
 =\|\psi^{j}\|_{2}\leq \lim_{n}\|\phi_{n}\|_{2}$~by~\eqref{6.1}, thus we have
\begin{align}\label{B}
\| v^{j}(t)\|_{S(\dot{H}^{\frac{1}{2}};[0,\widetilde{T}])}
&\leq c(\| v^{j}\|_{L^{\infty}([0,\widetilde{T}];L^{\frac{5}{2}})}
+\| v^{j}\|_{L^{4}([0,\widetilde{T}];L^{\frac{10}{3}})})\\ \nonumber
&\leq c(\| v^{j}\|_{L^{\infty}([0,\widetilde{T}];L^{2})}^{\frac{1}{2}}
\| \nabla v^{j}\|_{L^{\infty}([0,\widetilde{T}];L^{2})}^{\frac{1}{2}}
+\widetilde{T}^{\frac{1}{4}}\| \nabla v^{j}\|_{L^{\infty}([0,\widetilde{T}];L^{2})})\\ \nonumber
&\leq c(1+\widetilde{T}^{\frac{1}{4}})B.
\end{align}
In fact, from the local theory (see chapter 4 in \cite{TC}), we
obtain further that for each $1\leq j\leq M_{2}$
\begin{align}\label{B'}
\|D^{\frac{1}{2}} v^{j}(t)\|_{S(L^{2};[0,\widetilde{T}])}
\leq C(\widetilde{T},B).
\end{align}

For fixed ~$M$,
let$$\tilde{u}_{n}(x,t)=\sum_{j=1}^{M}v^{j}(x-x_{n}^{j},t-t_{n}^{j}),$$
and
let$$e_{n}=i\partial_{t}\tilde{u}_{n}+\Delta\tilde{u}_{n}+(V\ast|\tilde{u}_{n}|^{2})\tilde{u}_{n}.$$

Claim 1. There exists~$A=A(\widetilde{T})$~(independent of ~$M$)
such that for all~$M> M_{0},$~ there exists~$n_{0}=n_{0}(M)$~such
that for all~$n> n_{0},$~
$$\|\tilde{u}_{n}\|_{S(\dot{H}^{\frac{1}{2}};[0,\widetilde{T}])}\leq A.
 $$

 Claim 2. For each~$M>M_{0}$~and~$\epsilon>0,$~there exists~$n_{1}=n_{1}(M,\epsilon)$~
such that for~$n> n_{1}$~and for some~$ (q,r)\dot{H}^{-\frac{1}{2}}$~admissible,
$$\|e_{n}\|_{L^{q'}([0,\widetilde{T}];L^{r'})}\leq \epsilon.$$\\We postpone the proof of the claims to the end of our proof and suppose the
 two claims hold.
 Since~$u_{n}(0)-\tilde{u}_{n}(0)=W_{n}^{M},$~there exists~$M'=M'(\epsilon)$~large enough such that
for each~$M>M'$~there exists~$n_{2}=n_{2}(M')$~such that for~$n>
n_{2},$~
~$$\|e^{it\Delta}(u(0)-\tilde{u}(0))\|_{S(\dot{H}^{\frac{1}{2}};[0,\widetilde{T}])}\leq\epsilon.$$
For $A=A(\widetilde{T})$~in the first claim, Lemma~\ref{l62}~gives
us ~$\epsilon_{0}=\epsilon_{0}(A)\ll1.$ ~We select an
arbitrary~~$\epsilon\leq\epsilon_{0}$~and obtain from above
arguments an index~$M'=M'(\epsilon).$~ Now select an
arbitrary~$M>M'$, and set~$n'=\max(n_{0},n_{1},n_{2}).$~Then  by
Lemma~\ref{l62}~and the above arguments, for~$n> n',$~we have
\begin{equation}\label{6.5}
\|u_{n}-\tilde{u_{n}}\|_{S(\dot{H}^{\frac{1}{2}};[0,\widetilde{T}])}\leq c(\widetilde{T}) \epsilon.
\end{equation}
In order to obtain the~$\|\nabla\tilde{u}_{n}\|_{L^{\infty}([0,\widetilde{T}];L^{2})}$~bound,
we also have to discuss~$j\geq M_{2}+1.$~ As is noted in the first paragraph of the proof,
~$\| v^{j}(t-t_{n}^{j})\|_{S(\dot{H}^{\frac{1}{2}};[0,\widetilde{T}])}\rightarrow 0$~as~$n\rightarrow\infty.$~
By Strichartz estimate we can easily get~
$\|\nabla v^{j}(t-t_{n}^{j})\|_{L^{\infty}([0,\widetilde{T}];L^{2})}\leq c\|\nabla v^{j}(-t_{n}^{j})\|_{2}.$~
By the pairwise divergence of parameters,
\begin{align*}
\|\nabla\tilde{u}_{n}\|^{2}_{L^{\infty}([0,\widetilde{T}];L^{2})}
&=\sum_{j=1}^{M_{2}}\|\nabla v^{j}(t)\|^{2}_{L^{\infty}([0,\widetilde{T}];L^{2})}
+\sum_{M_{2}+1}^{M}\|\nabla v^{j}(t-t_{n}^{j})\|^{2}_{L^{\infty}([0,\widetilde{T}];L^{2})}+o_{n}(1)\\
&\leq c\left(M_{2}B^{2}+\sum_{M_{2}+1}^{M}\|\nabla NLH(-t_{n}^{j})\psi^{j}\|_{2}^{2}+o_{n}(1)\right)\\
&\leq c\left(M_{2}B^{2}+\|\nabla\phi_{n}\|_{2}^{2}+o_{n}(1)\right)
\leq c\left(M_{2}B^{2}+B^{2}+o_{n}(1)\right).
\end{align*}
Note that for $\frac{5}{2}<p<\frac{10}{3},$ 
from~\eqref{6.5}~we have for some ~$0<\theta<1$~
\begin{align*}
\|u_{n}-\tilde{u}_{n}\|_{L^{\infty}([0,\widetilde{T}];L^{p})}
&\leq c\left(\|u_{n}-\tilde{u}_{n}\|^{\theta}_{L^{\infty}([0,\widetilde{T}];L^{\frac{5}{2}})}
\|\nabla(u_{n}-\tilde{u}_{n})\|^{1-\theta}_{L^{\infty}([0,\widetilde{T}];L^{2})}
\right)\\ \nonumber
&\leq c(\widetilde{T})^{\theta}\left(M_{2}B^{2}+B^{2}+o_{n}(1)\right)^{\frac{1-\theta}{2}}\epsilon^{\theta}.
\end{align*}
Thus, by Hardy-Littlewood-Sobolev inequalities and H\"{o}lder estimates, we in fact obtain
\begin{align}\label{p+1}
\sup_{t\in[0,\widetilde{T}]}\|u_{n}-\tilde{u}_{n}\|_{L^{V}}^4
&\leq  c(\widetilde{T})^{2}\left(M_{2}B^{2}+B^{2}+o_{n}(1)\right)\epsilon^{2}.
\end{align}

Now in the sequel we first replace the large parameter
 ~$M$~ in the notation ~$\tilde{u}_{n}$~and all other arguments above for~$M_{1}$.
 Then for any fixed ~$M,$~we will prove ~\eqref{6.4}~ on~$[0,\widetilde{T}].$~ In fact, we need only to establish that, for each
 ~$t\in[0,\widetilde{T}],$~
\begin{equation}\label{6.6}
\|u_{n}\|_{L^{V}}^4=\sum_{j=1}^{M}\| v^{j}(t-t_{n}^{j})\|_{L^{V}}^4+\| W_{n}^{M}(t)\|_{L^{V}}^4+o_{n}(1).
\end{equation}
Since then by~\eqref{6.2}~and the energy conservation we have
\begin{equation}\label{6.7}
E(u_{n}(t))=\sum_{j=1}^{M}E(v^{j}(t-t_{n}^{j}))+E(W_{n}^{M}(t))+o_{n}(1).
\end{equation}
Thus ~\eqref{6.6}~combined with~\eqref{6.7}~gives~\eqref{6.4}~, which completes our proof. So now we are
 to establish~\eqref{6.6}

Firstly, for now, we again
 apply the perturbation theory Lemma \ref{l62} to $u_n(t)=W_{n}^{M}(t)$ and $\tilde{u}_n=\sum_{j=M+1}^{M_{1}}v^{j}(t-t_{n}^{j})$.
 For any fixed  $M<M_{1}$,
since $u_{n}(0)-\tilde{u}_{n}(0)=W_{n}^{M_1},$
similar to the above two claims and the arguments followed, we obtain
$$\|W_{n}^{M}(t)-\sum_{j=M+1}^{M_{1}}v^{j}(t-t_{n}^{j})\|_{L^V}^4\rightarrow 0\ \ as\ \ n\rightarrow\infty.$$
From all arguments above and by the pairwise divergence of parameters,
\begin{align*}\label{}
\| u_{n}\|_{L^V}^{4}&=\| \tilde{u}_{n}\|_{L^V}^{4}+o_{n}(1)\\
&=\|\sum_{j=1}^{M_{1}} v^{j}(t-t_{n}^{j})\|_{L^V}^{4}+o_{n}(1)\\
&=\sum_{j=1}^{M}\| v^{j}(t-t_{n}^{j})\|_{L^V}^{4}+\|\sum_{j=M+1}^{M_{1}}v^{j}(t-t_{n}^{j})\|_{L^V}^{4}+o_{n}(1)\\
&=\sum_{j=1}^{M}\| v^{j}(t-t_{n}^{j})\|_{L^V}^{4}+\| W_{n}^{M}(t)\|_{L^V}^{4}+o_{n}(1).
\end{align*}
If on the other hand~$M\geq M_{1}$,~ we then easily get from the selection of~$M_{1}$ (see above analysis) that
~$\|W_{n}^{M}(t)\|_{L^V}=o_{n}(1)$~and~\eqref{p+1}~implies ~\eqref{6.6}.

What the remainder is to establish the two claims.
Recall that $M_0$ is sufficiently large such that $\|e^{it\Delta}W_{n}^{M_0}\|_{S(\dot{H}^{\frac{1}{2}})}\leq \delta_{sd}/2$
and for each $j>M_0$, it holds that
$\|e^{it\Delta}v^j(-t_{n}^{j})\|_{S(\dot{H}^{\frac{1}{2}})}\leq \delta_{sd}.$
Similar to the small data scattering and Proposition \ref{wave operator}, we obtain
\begin{align}\label{5.10}
\|v^j(t-t_{n}^{j})\|_{S(\dot{H}^{\frac{1}{2}})}\leq  2
\|e^{it\Delta}v^j(-t_{n}^{j})\|_{S(\dot{H}^{\frac{1}{2}})}\leq 2\delta_{sd},
\end{align}
and
\begin{align}\label{5.10'}
\|D^{\frac{1}{2}}v^j(t-t_{n}^{j})\|_{S(L^2)}\leq c
\|v^j(-t_{n}^{j})\|_{\dot{H}^{\frac{1}{2}}}\ \ \ for\ \ j>M_0.
\end{align}
Thus by elementary inequality: for $a_j>0$
$$\left||\sum_{j=1}^Ma_j|^{\frac{7}{2}}-\sum_{j=1}^M|a_j|^{\frac{7}{2}}\right|\leq C_M\sum_{j\neq k}|a_j||a_k|^{\frac{5}{2}}$$
we have
\begin{align}\label{7/2}
\|\tilde{u}_n\|^{\frac{7}{2}}_{L^{\frac{7}{2}}([0,\widetilde{T}];L^{\frac{7}{2}})}
\leq&\sum_{j=1}^{M_2}\|v^j\|^{\frac{7}{2}}_{L^{\frac{7}{2}}([0,\widetilde{T}];L^{\frac{7}{2}})}
+\sum_{j=M_2+1}^{M_0}\|v^j(t-t_{n}^{j})\|^{\frac{7}{2}}_{L^{\frac{7}{2}}([0,\widetilde{T}];L^{\frac{7}{2}})}\\ \nonumber
&+\sum_{j=M_0+1}^{M}\|v^j(t-t_{n}^{j})\|^{\frac{7}{2}}_{L^{\frac{7}{2}}([0,\widetilde{T}];L^{\frac{7}{2}})}+crossterms \\ \nonumber
\leq&\sum_{j=1}^{M_2}\|D^{\frac{1}{2}}v^j\|^{\frac{7}{2}}_{S(L^2;[0,\widetilde{T}])}
+\sum_{j=M_2+1}^{M_0}\|D^{\frac{1}{2}}v^j(t-t_{n}^{j})\|^{\frac{7}{2}}_{S(L^2;[0,\widetilde{T}])}\\ \nonumber
&+\sum_{j=M_0+1}^{M}\|D^{\frac{1}{2}}v^j(t-t_{n}^{j})\|^{\frac{7}{2}}_{S(L^2;[0,\widetilde{T}])}+crossterms \\ \nonumber
\leq& M_0C(\widetilde{T},B)+M_0\epsilon^{\frac{7}{2}}+c\sum_{j=M_0+1}^{M}\|v^j(-t_{n}^{j})\|^{\frac{7}{2}}_{\dot{H}^{\frac{1}{2}}}
+crossterms
\end{align}
where we have used \eqref{B'} and the analysis in the second paragraph.
Now by \eqref{6.1}
\begin{align}\label{5.12}
\|u_{n,0}\|_{\dot{H}^{\frac{1}{2}}}^2
&=\sum_{j=1}^{M_0}\|v^j(-t_{n}^{j})\|_{\dot{H}^{\frac{1}{2}}}^2
+\sum_{j=M_0+1}^{M}\|v^j(-t_{n}^{j})\|_{\dot{H}^{\frac{1}{2}}}^2
+\|W_n^{M}\|_{\dot{H}^{\frac{1}{2}}}^2+o_n(1),
\end{align}
we know that the quantity $\sum_{j=M_0+1}^{M}\|v^j(-t_{n}^{j})\|_{\dot{H}^{\frac{1}{2}}}^2$
and so $\sum_{j=M_0+1}^{M}\|v^j(-t_{n}^{j})\|_{\dot{H}^{\frac{1}{2}}}^{\frac{7}{2}}$ is bounded
independently of $M$ provided $n>n_0$ is sufficiently large.
On the other hand, the $crossterms$ can also be made bounded by taking $n_0$ large owing to the pairwise divergence
of parameters.
Above all , we have shown that $\|\tilde{u}_n\|_{L^{\frac{7}{2}}([0,\widetilde{T}];L^{\frac{7}{2}})}$
is bounded independent of $M$ for $n>n_0$.
A similar argument give the  conclusion that  $\|\tilde{u}_n\|_{L^{\infty}([0,\widetilde{T}];L^{\frac{5}{2}})}$
is also bounded independent of $M$ for $n>n_0$
and the first claim holds true since the Strichartz norm
 $\|\tilde{u}_n\|_{S(\dot{H}^{\frac{1}{2}};[0,\widetilde{T}])}$
 can be bounded by interpolation between the  time-space norms with the above two  exponents.

Now we turn to prove the second claim. We easily have the following
expansion of $e_n$ which consists of $O(M^3)$ terms involving
$V*|v^j(t-t_{n}^{j})|^2v^k(t-t_{n}^{k})({k\neq j})$(we will call
such term cross term in the sequel).
\begin{align*}
e_n=&\left(V\ast|\sum_{j=1}^{M}v^j(t-t_{n}^{j})|^2\right)\sum_{j=1}^{M}v^j(t-t_{n}^{j})
-\sum_{j=1}^{M}\left(V\ast|v^j(t-t_{n}^{j})|^2\right)v^j(t-t_{n}^{j})\\
=&\left(V\ast\left(|\sum_{j=1}^{M}v^j(t-t_{n}^{j})|^2-\sum_{j=1}^{M}|v^j(t-t_{n}^{j})|^2\right)\right)\sum_{j=1}^{M}v^j(t-t_{n}^{j})\\
&+\sum_{j=1}^{M}\left(V\ast|v^j(t-t_{n}^{j})|^2\right)\sum_{k\neq j}v^k(t-t_{n}^{k}).
\end{align*}
The point is how to estimate those cross terms. Assume first that
$j\neq k$ and $|t_{n}^{j}- t_{n}^{k}|\rightarrow+\infty$, then at
least one index $\geq M_2+1$. Take the Strichartz estimate of one of
the cross terms for example, we have
\begin{align*}
\left\|(V\ast|v^j|^2)(t-t_{n}^{j})v^k(t-t_{n}^{k})\right\|_{L^{\frac{24}{13}}([0,\widetilde{T}];L^{\frac{12}{7}})}&=
\left\|(V\ast|v^j|^2)(t)v^k(t+t_{n}^{j}-t_{n}^{k})\right\|_{L^{\frac{24}{13}}([0,\widetilde{T}];L^{\frac{12}{7}})}.
\end{align*}
Similar to the analysis in the second paragraph,   this term goes to zero since $v^j,v^k\in L_t^{\frac{24}{5}}L_x^{\frac{60}{19}}\bigcap L_t^{8}L_x^{\frac{20}{7}}$
and
\begin{align*}
\left\|(V\ast|v^j|^2)(t)v^k(t+t_{n}^{j}-t_{n}^{k})\right\|_{L^{\frac{24}{13}}([0,\widetilde{T}];L^{\frac{12}{7}})}\leq
\|v^j\|^2_{L^{\frac{24}{5}}([0,\widetilde{T}];L^{\frac{60}{19}})}\|v^k(t+t_{n}^{j}-t_{n}^{k})\|_{L^{8}([0,\widetilde{T}];L^{\frac{20}{7}})}.
\end{align*}
Then if  $j\neq k$ and $t_{n}^{j}=t_{n}^{k}$, then by
\eqref{divergence}, $|x_{n}^{j}-x_{n}^{k}|\rightarrow+\infty.$ Take
the same one of the cross terms for example, we have
\begin{align*}
&\left\|\int\frac{|v^j(y-x_n^j)|^2v^k(x-x_n^k)}{|x-y|^3}dy\right\|_{L^{\frac{24}{13}}([0,\widetilde{T}];L^{\frac{12}{7}})}
=\left\|\int\frac{|v^j(y')|^2v^k(x-x_n^k)}{|x-x_n^j-y'|^3}dy\right\|_{L^{\frac{24}{13}}([0,\widetilde{T}];L^{\frac{12}{7}})}\\
=&\left\|\int\frac{|v^j(y')|^2v^k(x'+x_n^j-x_n^k)}{|x'-y'|^3}dy\right\|_{L^{\frac{24}{13}}([0,\widetilde{T}];L^{\frac{12}{7}})}
=\left\|(V\ast|v^j|^2)v^k(\cdot+x_n^j-x_n^k)\right\|_{L^{\frac{24}{13}}([0,\widetilde{T}];L^{\frac{12}{7}})}.
\end{align*}
In the same way, we obtain that it must go to zero again. Observe
that all other cross terms will  have the same property through
similar estimates, and  we in fact have proved the second claim.

\end{proof}

\begin{lemma}\label{l64}
(Profile Reordering).Let ~$\phi_{n}(x)$ ~be a  bounded
sequence in ~$H^{1}$~and let ~$\lambda_{0}>1.$~ Suppose that ~$M(\phi_{n})=M(Q),$
 $E(\phi_{n})/E(Q)=3\lambda_{n}^{2}-2\lambda_{n}^{3}$
with $\lambda_{n}\geq\lambda_{0}>1$ and $\|\nabla \phi_{n}\|_{2}/\|\nabla Q\|_{2}\geq\lambda_{n}$ for each~$n.$~
Then, for a given~$M,$~ the profiles can be reordered so that there exist~$1\leq M_{1}\leq M_{2}\leq M$ and \\
(1) ~For each~$1\leq j\leq M_{1},$~we have~$t_{n}^{j}=0$~and~$v^{j}(t)\equiv NLH(t)\psi^{j}$~does not scatter as
~$t\rightarrow+\infty.$~(We in fact assert that at least one~$j$~belongs to this category.)\\
(2)~For each~$M_{1}+1\leq j\leq M_{2},$~we have~$t_{n}^{j}=0$~and~$v^{j}(t)$~scatters as
~$t\rightarrow+\infty.$~(There is no ~$j$~in this category if~$M_{2}=M_{1}.$~ )\\
(3)~For each~$M_{2}+1\leq j\leq M$~we have~$|t_{n}^{j}|\rightarrow\infty.$~(There is no ~$j$~in this category if~$M_{2}=M.$~ )

\end{lemma}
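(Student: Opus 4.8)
The plan is to feed the sequence $\phi_n$ into the nonlinear profile decomposition of Proposition~\ref{p61}, to sort the profiles according to the asymptotics of the time parameters $t_n^j$ and to the forward behaviour of the associated nonlinear flows $v^j(t)\equiv NLH(t)\psi^j$, and then to prove the only substantive assertion — that the first class is non-empty, $M_1\ge 1$ — by contradiction, using the long-time perturbation Lemma~\ref{l62}. I would first record the a priori structure of the sequence: since $\|\phi_n\|_2=\|Q\|_2$ and $\{\phi_n\}$ is bounded in $H^1$, the numbers $\lambda_n\le\|\nabla\phi_n\|_2/\|\nabla Q\|_2$ are bounded, so along a subsequence $\lambda_n\to\lambda_\infty\in[\lambda_0,\infty)$ and $E(\phi_n)\to(3\lambda_\infty^2-2\lambda_\infty^3)E(Q)$; in particular, for $n$ large $E(\phi_n)\le(3\lambda_0^2-2\lambda_0^3)E(Q)<E(Q)$, with a fixed gap since $\lambda_0>1$. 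Since also $M(\phi_n)E(\phi_n)<M(Q)E(Q)$ and $\|\nabla\phi_n\|_2\|\phi_n\|_2>\|\nabla Q\|_2\|Q\|_2$, Proposition~\ref{p21} puts the solution $u_n\equiv NLH(t)\phi_n$ in the second case: $\eta_n(t)\ge\lambda_n>1$ throughout its maximal interval.

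Next I would apply Proposition~\ref{p61} with $M$ large enough that $\|e^{it\Delta}W_n^M\|_{S(\dot H^{1/2})}$ sits below the small-data threshold, and, using that each $t_n^j$ is either $\equiv 0$ or has $|t_n^j|\to\infty$, reorder so that $t_n^j\equiv 0$ for $1\le j\le M_2$ and $|t_n^j|\to\infty$ for $M_2+1\le j\le M$; among $1\le j\le M_2$ I would place first, as $1\le j\le M_1$, those $j$ for which $v^j$ does not scatter as $t\to+\infty$. This delivers (3) and the forms of (1)--(2), and it remains only to show $M_1\ge 1$. Assume for contradiction that \emph{every} profile with $t_n^j\equiv 0$ scatters forward. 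Discarding null profiles, every nontrivial profile then scatters in at least one time direction — those with $t_n^j\equiv 0$ by hypothesis, those with $|t_n^j|\to\infty$ because the corresponding nonlinear profile is an $H^1$ wave operator at $\mp\infty$ — so $E(v^j)>0$ for each $j$. Since moreover $M(v^j)\le M(Q)$ by \eqref{6.1} with $s=0$, and $E(W_n^M)\ge 0$ for $M$ large (small-data scattering of $NLH(t)W_n^M$; Lemma~\ref{sd}), the energy expansion \eqref{6.2} gives $0<E(v^j)\le E(\phi_n)+o_n(1)<E(Q)$, and hence $M(v^j)E(v^j)<M(Q)E(Q)$ for every $j$. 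Feeding this bound into Lemma~\ref{wave operator} for the profiles with $|t_n^j|\to\infty$ — which upgrades the a priori one-sided flow to a global solution on the good side — and into Proposition~\ref{p21} and Theorem~\ref{t22} for the profiles with $t_n^j\equiv 0$ — using that a forward-scattering solution cannot lie in the second case — one gets that every $v^j$ is a global first-case solution, hence scatters in both time directions, with $\|v^j\|_{S(\dot H^{1/2};\mathbb{R})}$ small and summably so, for $j$ large, via \eqref{6.1}.

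With all profiles under control, I would set $\tilde u_n(x,t)=\sum_{j=1}^M v^j(x-x_n^j,t-t_n^j)$ and $e_n=i\partial_t\tilde u_n+\Delta\tilde u_n+(V\ast|\tilde u_n|^2)\tilde u_n$ on $[0,\infty)$, and rerun the two Claims from the proof of Lemma~\ref{l63}, but now on all of $[0,\infty)$ — which is legitimate precisely because each $v^j$ has \emph{finite global} Strichartz norm and the cross terms still vanish by the pairwise divergence \eqref{divergence}. This yields a constant $A$ independent of $M$ with $\|\tilde u_n\|_{S(\dot H^{1/2};[0,\infty))}\le A$, together with $\|e_n\|_{S'(\dot H^{-1/2};[0,\infty))}\to 0$ and $\|e^{it\Delta}(u_n(0)-\tilde u_n(0))\|_{S(\dot H^{1/2};[0,\infty))}=\|e^{it\Delta}W_n^M\|_{S(\dot H^{1/2};[0,\infty))}\to 0$, as first $n\to\infty$ and then $M\to\infty$. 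Since the constant in Lemma~\ref{l62} is independent of the interval length, applying it on $[0,T]$ and letting $T\to\infty$ shows that $u_n$ is forward-global with $\|u_n\|_{S(\dot H^{1/2};[0,\infty))}<\infty$, i.e. $u_n$ scatters forward. But then $\|u_n(t)\|_{L^V}\to 0$, so $\|\nabla u_n(t)\|_2^2\to 2E(\phi_n)$ and $\eta_n(t)^2\to\frac{1}{3}(3\lambda_n^2-2\lambda_n^3)<1$, contradicting $\eta_n(t)\ge\lambda_n>1$ (and contradicting $\|\nabla u_n(t)\|_2^2\ge 0$ outright when $E(\phi_n)<0$). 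This gives $M_1\ge 1$; the degenerate case in which $M(v^j)=M(Q)$ for some $j$, so that all other profiles and $W_n^M$ carry vanishing mass, is covered by the same argument with $\tilde u_n$ a single profile.

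\emph{The main obstacle} is the step from the \emph{finite}-interval estimates of Lemma~\ref{l63} to \emph{global}-in-time ones on $[0,\infty)$. On a bounded interval $[0,\widetilde T]$ one can afford crude bounds like $\|v^j(\cdot-t_n^j)\|_{S(\dot H^{1/2};[0,\widetilde T])}\le c(1+\widetilde T^{1/4})B$; on $[0,\infty)$ one instead needs every nonlinear profile — including those with $|t_n^j|\to\infty$, for which only one-sided control is immediate from Proposition~\ref{p61} — to be a genuine global scattering solution of finite Strichartz norm. Securing that is exactly where the rigidity of the hypotheses enters: the exact mass $M(\phi_n)=M(Q)$, the energy gap $E(\phi_n)<E(Q)$, the Pythagorean expansions \eqref{6.1}--\eqref{6.2}, and the wave-operator Lemma~\ref{wave operator} together push every $v^j$ (and $W_n^M$) into the first, scattering, regime; without this one could only argue on finite time intervals, which is insufficient to produce scattering of $u_n$.
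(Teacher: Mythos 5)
Your proposal is correct in outline, but it proves the key assertion ($M_1\ge 1$) by a genuinely different route than the paper. The paper's proof is much shorter and stays at finite times: from $M(\phi_n)=M(Q)$, $E(\phi_n)/E(Q)=3\lambda_n^2-2\lambda_n^3$ and $\|\nabla\phi_n\|_2\ge\lambda_n\|\nabla Q\|_2$ it derives the potential-energy lower bound \eqref{6.8}, $\|\phi_n\|_{L^V}^4\ge\lambda_0^3\|Q\|_{L^V}^4$; the $L^V$-Pythagorean expansion \eqref{6.3} at $t=0$ then forces at least one bounded time shift (your argument subsumes this case, since your contradiction hypothesis also covers $M_2=0$), and the expansion \eqref{6.6} along the nonlinear flow (Lemma~\ref{l63}) evaluated at a single fixed large time $t_0$ — chosen so that the finitely many allegedly scattering profiles with $t_n^j=0$ have small $L^V$ norm there, while the $|t_n^j|\to\infty$ profiles and the remainder contribute $o_n(1)$ — contradicts the same lower bound applied to $u_n(t_0)$, which persists by energy conservation and the second-case dichotomy. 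Thus the ``main obstacle'' you identify (globalizing the claims of Lemma~\ref{l63} to $[0,\infty)$) simply does not arise in the paper: only finite-interval control is needed. Your route is instead the full Kenig--Merle-type scattering-transfer argument: upgrade every profile to a global, two-sided scatterer with finite global Strichartz norm (via \eqref{6.1}--\eqref{6.2}, Lemma~\ref{wave operator}, Proposition~\ref{p21} and Theorem~\ref{t22}), build the global approximate solution, and apply Lemma~\ref{l62} on $[0,T]$ with $T$-independent constants to conclude that $u_n$ itself would scatter forward, contradicting $\eta_n(t)\ge\lambda_n>1$. This costs more machinery — the uniform-in-$M$ global bound on $\|\tilde u_n\|_{S(\dot H^{1/2};[0,\infty))}$, the identification of the wave-operator solution with the nonlinear profile, quantitative (Strichartz-norm) scattering from Theorem~\ref{t22}, and a blow-up criterion to handle the a priori possibility that $u_n$ is not forward-global, all of which you assert but do not detail — but it buys something the paper's proof quietly assumes: the paper's use of \eqref{6.6} at time $t_0$ implicitly requires $u_n=NLH(t)\phi_n$ to exist with uniformly bounded $H^1$ norm on $[0,t_0]$ (the hypothesis of Lemma~\ref{l63}), which is not in the statement of the lemma (though it holds in both applications), whereas your argument needs no such assumption. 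Modulo those standard-but-unwritten details, your proof is sound.
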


\begin{proof}
Firstly, we claim that there exists at least one~$j$~such that~$t_{n}^{j}$~converges as~$n\rightarrow\infty.$~ In fact,
\begin{align}\label{6.8}
\frac{\|\phi_{n}\|^{4}_{L^V}}{\| Q\|^{4}_{L^V}}
&=-\frac{1}{2}\frac{E(\phi_{n})}{E(Q)}+\frac{3}{2}\frac{\|\nabla \phi_{n}\|_{2}^{2}}{\|\nabla Q\|_{2}^{2}}\geq-\frac{1}{2}\left(3\lambda_{n}^{2}-2\lambda_{n}^{3}\right)
+\frac{3}{2}\lambda_{n}^{2}=\lambda_{n}^{3}\geq\lambda_{0}^{3}>1.
\end{align}
If~$|t_{n}^{j}|\rightarrow\infty,$~then~$\|NLH(-t_{n}^{j})\psi^{j}\|_{L^V}\rightarrow
0$~and ~\eqref{6.3}~implies our conclusion. Now if~$j$~is such
that~$t_{n}^{j}$~converges as~$n\rightarrow\infty,$~ then we might
as well assume ~$t_{n}^{j}=0.$~

Reordering the profiles~$\psi^{j}$~so that for~$1\leq j\leq
M_{2},$~~we have~$t_{n}^{j}=0,$~ and for~$M_{2}+1\leq j\leq M$~we
have~$|t_{n}^{j}|\rightarrow\infty$. It remains to show that there
exists one~$j,$~$1\leq j\leq M_{2},$~such that~$v^{j}(t)$~does not
scatter as ~$t\rightarrow+\infty.$ To the contrary, if for all
$1\leq j\leq M_{2},$  $v^{j}(t)$ scatters, then we have that
$\lim_{t\rightarrow+\infty}\|v^{j}(t)\|_{L^V}=0.$ Let $t_{0}$~be
sufficiently large so that for all~ $1\leq j\leq M_{2},$~we
have~$\|v^{j}(t_{0})\|_{L^V}^{4}\leq\epsilon/M_{2}.$~The
~$L^{V}$~orthogonality~\eqref{6.6}~ along the NLH flow and an
argument as~\eqref{6.8}~ imply
\begin{align*}\label{}
\lambda_{0}^{3}\| Q\|_{L^V}^{4}
&\leq\|u_{n}(t_{0})\|_{L^V}^{4}\\
&=\sum_{j=1}^{M_{2}}\| v^{j}(t_{0})\|_{L^V}^{4}+\sum_{j=M_{2}+1}^{M}\|v^{j}(t_{0}-t_{n}^{j})\|_{L^V}^{4}
+\| W_{n}^{M}(t_{0})\|_{L^V}^{4}+o_{n}(1).
\end{align*}
We know from ~Proposition~\ref{p61}~that,as~$n\rightarrow+\infty,$~~$\sum_{j=M_{2}+1}^{M}\|v^{j}(t_{0}-t_{n}^{j})\|_{L^V}^{4}\rightarrow 0,$~
and thus we have
\begin{align*}\label{}
\lambda_{0}^{3}\| Q\|_{L^V}^{4}
\leq\epsilon
+\| W_{n}^{M}(t_{0})\|_{L^V}^{4}+o_{n}(1).
\end{align*}
This  gives a contradiction since $W_n^M(t)$ is a scattering solution.

\end{proof}

\section{ Inductive Argument and Existence of a Critical Solution}

We now begin to prove  Theorem~\ref{th1}. By Remark~\ref{p} we only
need to deal with the case that $P(u)=0$.  We will use the notations
from \cite{holmer10} and
 give some definitions first.
\begin{definition}\label{d71}
Let ~$\lambda>1.$~We say that~$\exists GB(\lambda,\sigma)$~holds if there exists a solution~$u(t)$~to\\~\eqref{1.1}~
such that$$P(u)=0,\ \ \ \ M(u)=M(Q),\ \ \ \frac{E(u)}{E(Q)}=3\lambda^{2}-2\lambda^{3}$$
and$$\lambda\leq\frac{\|\nabla u(t)\|_{2}}{\|\nabla Q\|_{2}}\leq\sigma\ \ \ for\  all \ \ t\geq 0.$$
\end{definition}

$\exists GB(\lambda,\sigma)$~means that there exist solutions with energy~$3\lambda^{2}-2\lambda^{3}$~globally bounded by~$\sigma.$~
Thus by Proposition~\ref{p51}~, ~$\exists GB(\lambda,\lambda(1+\rho_{0}(\lambda_{0})))$~is false for all~$\lambda\geq\lambda_{0}>1.$~

The statement ~$\exists GB(\lambda,\sigma)$~is false is equivalent to say that
for every solution~$u(t)$~to~\eqref{1.1}~\\with~$M(u)=M(Q)$~and~
$E(u)/E(Q)=3\lambda^{2}-2\lambda^{3}$~
such that~$\|\nabla u(t)\|_{2}/\|\nabla Q\|_{2}\geq\lambda$~for all~$t,$~
there must exists a time ~$t_{0}\geq0$~such that~$\|\nabla u(t_{0})\|_{2}/\|\nabla Q\|_{2}\geq\sigma.$~
By resetting the initial time,  we can find a sequence~$t_{n}\rightarrow\infty$~such that
~$\|\nabla u(t_{n})\|_{2}/\|\nabla Q\|_{2}\geq\sigma$~for all~$n.$~

Note that if~$\lambda\leq\sigma_{1}\leq\sigma_{2},$~then ~$\exists GB(\lambda,\sigma_{2})$~is false implies
~$\exists GB(\lambda,\sigma_{1})$~is false. We will induct on the statement and define a threshold.

\begin{definition}\label{d72}
(The Critical Threshold.)~Fix~$\lambda_{0}>1.$~Let~$\sigma_{c}=\sigma_{c}(\lambda_{0})$~be the supremum of all
~$\sigma>\lambda_{0}$~such that~$\exists GB(\lambda,\sigma)$~is false for all~$\lambda$~such that~$\lambda_{0}\leq\lambda\leq\sigma.$~
\end{definition}

Proposition~\ref{p51}~implies that ~$\sigma_{c}(\lambda_{0})>\lambda_{0}.$~Let ~$u(t)$~be any solution to~\eqref{1.1}~
with~$P(u)=0,$~~$M(u)=M(Q),$~
$E(u)/E(Q)\leq3\lambda_{0}^{2}-2\lambda_{0}^{3}$~
and~$\|\nabla u(0)\|_{2}/\|\nabla Q\|_{2}>1.$~If~$\lambda_{0}>1$~and~$\sigma_{c}=\infty,$~we claim that
there exists a sequence of times~$t_{n}$~such that~$\|\nabla u(t_{n})\|_{2}\rightarrow\infty.$~
In fact, if not, and let ~$\lambda\geq\lambda_{0}$~be such that~
$E(u)/E(Q)=3\lambda^{2}-2\lambda^{3}.$~
Since there is no sequence~$t_{n}$~such that~$\|\nabla u(t_{n})\|_{2}\rightarrow\infty,$~
there must exists~$\sigma<\infty$~such that
~$\lambda\leq\|\nabla u(t)\|_{2}/\|\nabla Q\|_{2}\leq\sigma$~for all~$t\geq0,$~
which means that~$\exists GB(\lambda,\sigma)$~holds true. Thus ~$\sigma_{c}\leq\sigma<\infty$~
and we get a contradiction.

In view of the above results, if we can prove that for
every~$\lambda_{0}>1$~then~$\sigma_{c}(\lambda_{0})=\infty,$~we then
have in fact proved   Theorem~\ref{th1}.~Thus, in the sequel, we
shall carry it out by contradiction. More precisely, fix
~$\lambda_{0}>1$~ and assume~$\sigma_{c}<\infty,$~we shall work
toward a absurdity. (It, of course, suffices to do this for
~$\lambda_{0}$~close to 1,
 so  we might as well assume that~$\lambda_{0}<\frac{3}{2},$ which will be convenient in the sequel. )
For that purpose, we need first to obtain the existence of a critical solution:

\begin{lemma}\label{l81}
$\sigma_{c}(\lambda_{0})<\infty$.~
Then there exist initial data~$u_{c,0}$~and~$\lambda_{c}\in[\lambda_{0},\sigma_{c}(\lambda_{0})]$~such that ~$u_{c}(t)\equiv NLH(t)u_{c,0}$~
is global, ~$P(u_{c})=0,$~~$M(u_{c})=M(Q),$~
~$E(u_{c})/E(Q)=3\lambda_{c}^{2}-2\lambda_{c}^{3},$~and~
$$\lambda_{c}\leq\frac{\|\nabla u_{c}(t)\|_{2}}{\|\nabla Q\|_{2}}\leq\sigma_{c}\ \ \ for\  all \ \ t\geq 0.$$~

\end{lemma}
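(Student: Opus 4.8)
The plan is to run the concentration--compactness extraction of a minimal (``critical'') element, with the statement $\exists GB(\cdot,\cdot)$ playing the role usually played by a scattering threshold. First I would use $\sigma_c(\lambda_0)<\infty$ together with Definition~\ref{d72}: choose $\sigma_n\downarrow\sigma_c$ with $\sigma_n>\sigma_c$, so that $\exists GB(\lambda_n,\sigma_n)$ holds for some $\lambda_n\in[\lambda_0,\sigma_n]$; let $u_n$ be a corresponding solution and, after passing to a subsequence, $\lambda_n\to\lambda_c'\in[\lambda_0,\sigma_c]$. The sequence $\phi_n:=u_n(0)$ is bounded in $H^1$, since $\|\phi_n\|_2=\|Q\|_2$ and $\|\nabla\phi_n\|_2\le\sigma_n\|\nabla Q\|_2$. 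Apply the nonlinear profile decomposition of Proposition~\ref{p61}, reordered by Lemma~\ref{l64}, to get profiles $\psi^j$, parameters $t_n^j,x_n^j$ and remainders $W_n^M$. Since $\|\nabla u_n(t)\|_{L^\infty([0,T];L^2)}\le\sigma_n\|\nabla Q\|_2$ is uniformly bounded for each fixed $T$, Lemma~\ref{l63} applies and propagates the $\dot H^1$ Pythagorean identity \eqref{6.4} to all $t\in[0,T]$; letting $n\to\infty$ and then $T\to\infty$ shows that every $v^j\equiv NLH(t)\psi^j$ with $t_n^j=0$ is forward global with $\|\nabla v^j(t)\|_2\le\sigma_c\|\nabla Q\|_2$ for all $t\ge0$, while the profiles with $|t_n^j|\to\infty$ and the remainder all scatter forward.

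The core is the rigidity claim: exactly one profile survives, it carries the full mass $M(Q)$, it does not scatter forward, and $\|W_n^M\|_2\to0$. By Lemma~\ref{l64}(1) there is at least one profile $v^{j_0}$ with $t_n^{j_0}=0$ that does not scatter forward; write $m_{j_0}=M(\psi^{j_0})>0$. Suppose for contradiction $m_{j_0}<M(Q)$. Using the scaling $f\mapsto\mu^2 f(\mu\,\cdot)$ — which preserves $M\cdot E$ and $\|\cdot\|_2\|\nabla\cdot\|_2$, cf.\ Remark~\ref{r43} — rescale $\psi^{j_0}$ to mass $M(Q)$; the resulting solution $\tilde v^{j_0}$ still fails to scatter, so by Theorem~\ref{t22} it lies in case~(2) of Proposition~\ref{p21}, with energy parameter $\lambda^{*}>1$ and $\lambda^{*}\le\|\nabla\tilde v^{j_0}(t)\|_2/\|\nabla Q\|_2\le\sigma_c\sqrt{m_{j_0}/M(Q)}<\sigma_c$ for all $t\ge0$ (the upper bound from the first paragraph and scaling invariance of $\eta$). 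After a Galilean transform making the momentum vanish — which, by Remark~\ref{p}, preserves the dichotomy while only decreasing $\eta$ and the energy parameter — $\tilde v^{j_0}$ witnesses $\exists GB(\lambda^{*},\bar\sigma)$ with $\lambda_0\le\lambda^{*}\le\bar\sigma<\sigma_c$, \emph{provided} $\lambda^{*}\ge\lambda_0$; by monotonicity of $\exists GB$ in its second argument this contradicts $\sigma_c=\sup$. If instead $1<\lambda^{*}<\lambda_0$, then $m_{j_0}E(\psi^{j_0})=(3(\lambda^{*})^2-2(\lambda^{*})^3)M(Q)E(Q)>(3\lambda_0^2-2\lambda_0^3)M(Q)E(Q)\ge M(Q)E(u_n)$; combining with the energy Pythagorean identity \eqref{6.2} and $E(W_n^M)\ge o_n(1)$ forces some other profile $\psi^k$ ($k\ne j_0$) to have $E(\psi^k)<0$. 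Such a $v^k$ satisfies $\eta>\frac32$ by \eqref{2.5}, hence does not scatter, hence (since $|t_n^k|\to\infty$ would make it scatter) has $t_n^k=0$; its mass-normalized, momentum-corrected evolution then has energy parameter $>\frac32>\lambda_0$ and is globally bounded by $\sigma_c\sqrt{m_k/M(Q)}<\sigma_c$, yielding the same contradiction — here the standing hypothesis $\lambda_0<\frac32$ is used. Hence $m_{j_0}=M(Q)$, and then the $s=0$ instance of \eqref{6.1} kills every other profile and gives $\|W_n^M\|_2\to0$.

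I would then set $u_{c,0}=\psi^{j_0}$, $u_c(t)=NLH(t)u_{c,0}$, and define $\lambda_c>1$ by $E(u_c)/E(Q)=3\lambda_c^2-2\lambda_c^3$. Clearly $M(u_c)=M(Q)$. Since $\|W_n^M\|_2\to0$, the $L^V$ term of $E(W_n^M)$ is controlled by Hardy--Littlewood--Sobolev \eqref{hls} and tends to $0$, so $E(W_n^M)=\frac12\|\nabla W_n^M\|_2^2+o_n(1)\ge o_n(1)$; with \eqref{6.2} this gives $E(\psi^{j_0})\le\lim_n E(u_n)=(3(\lambda_c')^2-2(\lambda_c')^3)E(Q)$, hence $M(u_c)E(u_c)<M(Q)E(Q)$ and, by monotonicity of $x\mapsto 3x^2-2x^3$ on $(1,\infty)$, $\lambda_c\ge\lambda_c'\ge\lambda_0$. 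As $u_c$ does not scatter, Theorem~\ref{t22} excludes case~(1) of Proposition~\ref{p21}, so case~(2) gives $\|\nabla u_c(t)\|_2/\|\nabla Q\|_2\ge\lambda_c$ for all $t$; together with $\|\nabla u_c(t)\|_2\le\sigma_c\|\nabla Q\|_2$ (first paragraph) and $\lambda_c\le\sigma_c$ this is the asserted two-sided bound on $[0,\infty)$, and in particular $u_c$ is forward global. Finally $P(u_c)=0$ comes for free: $P(\phi_n)=0$, while $|P(W_n^M)|\le\|\nabla W_n^M\|_2\|W_n^M\|_2\to0$ and the only cross term in $P(\phi_n)$ is $O(\|W_n^M\|_2)\to0$, so $P(\psi^{j_0})=0$.

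The main obstacle is the rigidity step of the second paragraph — concretely, verifying that when the surviving non-scattering profile does not already carry the full mass, its mass-normalized and Galilean-corrected nonlinear evolution is a legitimate witness of $\exists GB(\lambda^{*},\bar\sigma)$ with $\lambda_0\le\lambda^{*}\le\bar\sigma<\sigma_c$, and disposing of the residual possibility $\lambda^{*}<\lambda_0$ by extracting an auxiliary negative-energy profile (whose energy parameter automatically exceeds $\frac32>\lambda_0$). Everything else — the propagation of the Pythagorean expansions, the small-data and wave-operator facts, and the Galilean bookkeeping — is available from Sections~2 and~6.
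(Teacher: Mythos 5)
Your overall strategy is genuinely different from the paper's at the decisive rigidity step, and most of it is sound. The paper does not argue on the mass of an arbitrary non-scattering profile; it first uses the energy Pythagorean expansion \eqref{6.2} to select a non-scattering profile of \emph{smallest} energy, so that $E(\psi^{1})\leq\max\{\lim_n E(\phi_n),0\}$, hence $M(\psi^{1})E(\psi^{1})<M(Q)E(Q)$ and the parameter $\tilde\lambda$ exists with $\tilde\lambda\geq\lambda_0$ automatically (the footnote uses $\lambda_0<\tfrac32$). It then compares $\tilde\lambda$ with $\sigma_c$: when $\tilde\lambda<\sigma_c$ the inductive hypothesis supplies times $t_k$ along which the scale-invariant gradient of $v^{1}$ climbs to $\sigma_c$, and the time-dependent expansion of Lemma \ref{l63} is squeezed in \eqref{8.1} between $\sigma_c^{2}-o_k(1)$ and $\sigma_c^{2}+o_n(1)$, forcing $M(v^{1})=M(Q)$, $v^{j}\equiv0$ for $j\geq2$ and $W_n^{M}\to0$; the case $\tilde\lambda=\sigma_c$ is squeezed at $t=0$. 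You instead exploit the scaling symmetry (invariance of $M\cdot E$ and $\|u\|_2\|\nabla u\|_2$) together with the definition of $\sigma_c$ to rule out sub-critical mass directly, avoiding the time-sequence/squeeze argument; the properties you then extract ($M(u_c)=M(Q)$, $\lambda_c\in[\lambda_0,\sigma_c]$, the two-sided gradient bound hence globality, $P(u_c)=0$) are exactly what Lemma \ref{l81} needs, so the route is viable.

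There is, however, a concrete gap where your rigidity step begins. You take an \emph{arbitrary} non-scattering profile $\psi^{j_0}$ from Lemma \ref{l64}(1) and assert that its mass-normalized evolution "lies in case (2) of Proposition \ref{p21}, with energy parameter $\lambda^{*}>1$." This presupposes $M(\psi^{j_0})E(\psi^{j_0})<M(Q)E(Q)$, which nothing you have established guarantees: the a priori information is only $M(\psi^{j_0})\leq M(Q)$ and $\|\nabla v^{j_0}(t)\|_2\leq\sigma_c\|\nabla Q\|_2$, which allows $M(\psi^{j_0})E(\psi^{j_0})$ as large as $3\sigma_c^{2}M(Q)E(Q)$ if some companion profile carries negative energy; in that regime $\lambda^{*}$ is undefined via \eqref{e}, Theorem \ref{t22} is not applicable, and neither of your two cases covers the situation. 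The repair lies within your own toolkit: either choose $j_0$ as the paper does (minimal energy among the non-scattering profiles, so $E(\psi^{j_0})\leq\max\{\lim_nE(\phi_n),0\}$, whence $\lambda^{*}$ exists and in fact $\lambda^{*}\geq\lambda_0$, making your sub-case $\lambda^{*}<\lambda_0$ unnecessary), or observe that $M(\psi^{j_0})E(\psi^{j_0})\geq M(Q)E(Q)$ forces, via \eqref{6.2} and $E(W_n^M)\geq o_n(1)$, a companion profile with $E<0$, which your own sub-case-2 extraction already disposes of; as written, though, the case is missing. A secondary slip: the Galilean boost lowers the energy and therefore \emph{raises} the parameter defined by \eqref{e} (the root $>1$ of a decreasing function), contrary to your claim that it decreases it; since you only need the parameter to remain $\geq\lambda_0$, this does not break the argument, but the justification should be stated in the correct direction.
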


\begin{proof}
By definition of ~$\sigma_{c},$~there exist sequence~$\lambda_{n}$~and~$\sigma_{n}$~such that~$\lambda_{0}\leq\lambda_{n}\leq\sigma_n$~
and $\sigma_{n}\downarrow\sigma_{c}$~for which~$\exists GB(\lambda_{n},\sigma_{n})$~holds. This means that there exists~$u_{n,0}$~
such that~$u_{n}(t)\equiv NLH(t)u_{n,0}$~is global with ~$P(u_{n})=0,$~~$M(u_{n})=M(Q),$~
~$E(u_{n})/E(Q)=3\lambda_{n}^{2}-2\lambda_{n}^{3},$~and~
$$\lambda_{n}\leq\frac{\|\nabla u_{n}(t)\|_{2}}{\|\nabla Q\|_{2}}\leq\sigma_{n}\ \ \ for\  all \ \ t\geq 0.$$
The boundedness of~$\lambda_{n}$~ make us pass to a subsequence such that ~$\lambda_{n}$~converges with a limit ~$\lambda'\in[\lambda_{0},\sigma_{c}].$~

According to Lemma~\ref{l64},~where we take~$\phi_{n}=u_{n,0}$, for
$M_{1}+1\leq j\leq M_{2}$, $v^{j}(t)\equiv NLH(t)\psi^{j}$ scatter
as $t\rightarrow+\infty$ and combined with Proposition~\ref{p61},
for ~$M_{2}+1\leq j\leq M$, $v^{j}$~also scatter in one or the other
time direction. Thus by the scattering theory, for~$M_{1}+1\leq
j\leq M$, we have $E(v_{j})=E(\psi_{j})\geq 0$ and then
by~\eqref{6.2}
$$\sum_{j=1}^{M_{1}}E(\psi^{j})\leq E(\phi_{n})+o_{n}(1).$$
Thus there exists at least one~$1\leq j\leq M_{1}$~with
$$E(\psi^{j})\leq\max\{\lim_{n}E(\phi_{n}),0\},$$
Without loss of generality, we might  take~$j=1$. Since, by the
profile composition, also ~$M(\psi^{1})\leq
\lim_{n}M(\phi_{n})=M(Q),$~we then have
$$\frac{M(\psi^{1})E(\psi^{1})}{M(Q)E(Q)}\leq \max\left\{\lim_{n}\frac{E(\phi_{n})}{E(Q)},0\right\}.$$
Thus, there exist~$\tilde{\lambda}\geq\lambda_{0}$~
\footnote{If ~$\lim_{n}E(\phi_{n})\geq 0,$~ we have~$\tilde{\lambda}\geq\lambda'\geq\lambda_{0};$~ while in the case~$\lim_{n}E(\phi_{n})<0,$
we will have ~$\tilde{\lambda}\geq\frac{3}{2}>\lambda_{0}$~ though we might not have~$\tilde{\lambda}\geq\lambda'$~.}such that
$$\frac{M(\psi^{1})E(\psi^{1})}{M (Q)E(Q)}
=3\tilde{\lambda}^{2}-2\tilde{\lambda}^{3}.$$
Note that by Lemma~\ref{l64},~~$v^{1}$~does not scatter, so it follows from Theorem \ref{t22} that
~$\|\psi^{1}\|_{2}\|\nabla\psi^{1}\|_{2}<\|Q\|_{2}\|\nabla Q\|_{2}$~cannot hold. Then by the dichotomy Proposition~\ref{p21},~
we must have~$\|\psi^{1}\|_{2}\|\nabla\psi^{1}\|_{2}\geq\tilde{\lambda}\|Q\|_{2}\|\nabla Q\|_{2}.$~

Now if $\tilde{\lambda}>\sigma_{c}$ and recall that $t_n^1=0,$ then for all  $t$ we know that
\begin{align}\label{8.0}
\tilde{\lambda}^{2}&\leq
\frac{\|v^{1}(t)\|^2 _{2}\|\nabla v^{1}(t)\|^{2}_{2}}{\|Q\|^2 _{2}\|\nabla Q\|^{2}_{2}}
\leq\frac{\|\nabla v^{1}(t)\|^{2}_{2}}{\|\nabla Q\|^{2}_{2}}
\leq\frac{\sum_{j=1}^{M}\|\nabla v^{j}(t-t_{n}^{j})\|^{2}_{2}+\|\nabla W_{n}^{M}(t)\|^{2}_{2}}{\|\nabla Q\|^{2}_{2}}.\\ \nonumber
\end{align}
Taking $t=0$,for example, by Lemma \ref{l63} we have
\begin{align*}
\tilde{\lambda}^{2} &\leq\frac{\sum_{j=1}^{M}\|\nabla
v^{j}(-t_{n}^{j})\|^{2}_{2}+\|\nabla W_{n}^{M}\|^{2}_{2}}{\|\nabla
Q\|^{2}_{2}} \leq\frac{\|\nabla u_{n}(0)\|^{2}_{2}}{\|\nabla
Q\|^{2}_{2}}+o_{n}(1)\leq \sigma_{c}^{2}+o_{n}(1),
\end{align*}
which contradicts the assumption ~$\tilde{\lambda}>\sigma_{c}$. Hence we must have~$\tilde{\lambda}\leq\sigma_{c}$.~

Now if $\tilde{\lambda}<\sigma_{c},$
we know from the definition of~$\sigma_{c}$~that
~$\exists GB(\tilde{\lambda},\sigma_{c}-\delta)$~is false for any~$\delta>0$~sufficiently small,
and then there exists a nondecreasing sequence~$t_{k}$~of times such that
$$\lim_{k}\frac{\|v^{1}(t_{k})\|_{2} \|\nabla v^{1}(t_{k})\|_{2}}
{\|Q\|_{2} \|\nabla Q\|_{2}}\geq \sigma_{c}.$$
Note that~$t_{n}^{1}=0,$~then
\begin{align}\label{8.1}
\sigma_{c}^{2}-o_{k}(1)&\leq
\frac{\|v^{1}(t_{k})\|^2 _{2}\|\nabla v^{1}(t_{k})\|^{2}_{2}}{\|Q\|^2 _{2}\|\nabla Q\|^{2}_{2}}
\leq\frac{\|\nabla v^{1}(t_{k})\|^{2}_{2}}{\|\nabla Q\|^{2}_{2}}\\ \nonumber
&\leq\frac{\sum_{j=1}^{M}\|\nabla v^{j}(t_{k}-t_{n}^{j})\|^{2}_{2}+\|\nabla W_{n}^{M}(t_{k})\|^{2}_{2}}{\|\nabla Q\|^{2}_{2}}\\ \nonumber
&\leq\frac{\|\nabla u_{n}(t_k)\|^{2}_{2}}{\|\nabla Q\|^{2}_{2}}+o_{n}(1)\\ \nonumber
&\leq \sigma_{c}^{2}+o_{n}(1),
\end{align}
where by Lemma~\ref{l63}~we take~$n=n(k)$~large.
Taking~$k\rightarrow \infty$~and hence ~$n(k)\rightarrow \infty,$~
we conclude that all inequalities must be equalities.
Thus  we get that~$W_{n}^{M}(t_{k})\rightarrow0$~ in $H^{1},$
$M(v^{1})=M(Q)$ and  $ v^{j}\equiv0$ for all~$j\geq2. $ Thus
easily~$P(v^{1})=P(u_{n})=0.$~ On the other hand, if
~$\tilde{\lambda}=\sigma_{c},$   we need not the inductive
hypothesis but, similar to \eqref{8.0}, we obtain
\begin{align*}
\sigma_{c}^{2}
&\leq\frac{\sum_{j=1}^{M}\|\nabla v^{j}(-t_{n}^{j})\|^{2}_{2}+\|\nabla W_{n}^{M}\|^{2}_{2}}{\|\nabla Q\|^{2}_{2}}
\leq\frac{\|\nabla u_{n}(0)\|^{2}_{2}}{\|\nabla Q\|^{2}_{2}}+o_{n}(1)\leq \sigma_{c}^{2}+o_{n}(1),
\end{align*}
and then again,  we have $W_{n}^{M}\rightarrow0$~ in $H^{1}$,
$M(v^{1})=M(Q)$ and  $ v^{j}\equiv0$ for all~$j\geq2. $
 Moreover, by Lemma~\ref{l63},~for all ~$t$~$$\frac{\|\nabla v^{1}(t)\|^{2}_{2}}{\|\nabla Q\|^{2}_{2}}
 \leq\lim_{n}\frac{\|\nabla u_{n}(t)\|^{2}_{2}}{\|\nabla Q\|^{2}_{2}}\leq\sigma_{c}^{2}.$$
 Hence, we take~$u_{c,0}=v^{1}(0)=\psi^{1}$~and~$\lambda_{c}=\tilde{\lambda}$~to complete our proof.

\end{proof}

\section{Concentration of Critical Solutions and Proof of Theorem~\ref{th1}}

In this section, we will finally  complete  our proof of  Theorem
~\ref{th1} by virtue of the precompactness of the flow of the
critical solution. For convenience, we take~$u(t)=u_{c}(t)$~in the
sequel.

\begin{lemma}\label{l91}
There exists a path~$x(t)$~in~$\mathbb{R}^{N}$~such that
$$K\equiv\left\{u(t,\cdot-x(t))|t\geq 0\right\}\subset H^{1}$$
is precompact in ~$H^{1}.$~

\end{lemma}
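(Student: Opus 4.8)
The plan is to prove that the forward orbit $\{u_{c}(t):t\ge 0\}$ is precompact in $H^{1}$ modulo spatial translations, and then to manufacture the path $x(t)$ from this. Since $\|u_{c}(t)\|_{2}=\|Q\|_{2}$ and $\|\nabla u_{c}(t)\|_{2}\le\sigma_{c}\|\nabla Q\|_{2}$ for all $t\ge 0$, the orbit is bounded in $H^{1}$, so it suffices to prove the following: for every sequence $t_{n}\ge 0$ there are a subsequence (not relabelled), translations $x_{n}\in\mathbb{R}^{5}$ and $w\in H^{1}$ with $u_{c}(t_{n},\cdot-x_{n})\to w$ strongly in $H^{1}$. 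If $t_{n}$ has a subsequence converging to a finite time this is immediate from the continuity of the $H^{1}$-flow (with $x_{n}\equiv 0$), so we may assume $t_{n}\to+\infty$.

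To prove the claim I would apply the nonlinear profile decomposition, Proposition~\ref{p61}, to $\phi_{n}=u_{c}(t_{n})$, producing profiles $\psi^{j}$, parameters $x_{n}^{j},t_{n}^{j}$ and remainders $W_{n}^{M}$. By conservation of mass and energy along the critical flow, $M(\phi_{n})=M(Q)$, $E(\phi_{n})/E(Q)=3\lambda_{c}^{2}-2\lambda_{c}^{3}$ and $\lambda_{c}\le\|\nabla\phi_{n}\|_{2}/\|\nabla Q\|_{2}\le\sigma_{c}$, so the hypotheses of Lemma~\ref{l64} hold with the constant sequence $\lambda_{n}\equiv\lambda_{c}\ge\lambda_{0}$. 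Reordering as in Lemma~\ref{l64}, there is at least one profile, which I take to be $v^{1}=NLH(t)\psi^{1}$, with $t_{n}^{1}=0$ and $v^{1}$ not scattering forward in time. Selecting $v^{1}$ among the non-scattering profiles with vanishing time shift to have minimal energy and arguing as in the proof of Lemma~\ref{l81} (using the energy expansion~\eqref{6.2}, Theorem~\ref{t22}, the dichotomy Proposition~\ref{p21}, and $\lambda_{0}<3/2$), one gets $M(\psi^{1})E(\psi^{1})/M(Q)E(Q)=3\tilde\lambda^{2}-2\tilde\lambda^{3}$ for some $\tilde\lambda\ge\lambda_{0}$, together with $\|\psi^{1}\|_{2}\|\nabla\psi^{1}\|_{2}\ge\tilde\lambda\|Q\|_{2}\|\nabla Q\|_{2}$.

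Next I would run the trichotomy on $\tilde\lambda$ versus $\sigma_{c}$ exactly as in the proof of Lemma~\ref{l81}. The case $\tilde\lambda>\sigma_{c}$ is excluded by the $\dot{H}^{1}$ Pythagorean expansion~\eqref{6.1} at the initial time, which forces $\tilde\lambda^{2}\le\|\nabla\phi_{n}\|_{2}^{2}/\|\nabla Q\|_{2}^{2}+o_{n}(1)\le\sigma_{c}^{2}+o_{n}(1)$. In the case $\tilde\lambda<\sigma_{c}$, the definition of $\sigma_{c}$ (falsity of $\exists GB(\tilde\lambda,\sigma)$ for $\tilde\lambda\le\sigma<\sigma_{c}$) supplies a nondecreasing sequence $t_{k}$ with $\lim_{k}\|v^{1}(t_{k})\|_{2}\|\nabla v^{1}(t_{k})\|_{2}/(\|Q\|_{2}\|\nabla Q\|_{2})\ge\sigma_{c}$; evaluating the $\dot{H}^{1}$ Pythagorean expansion along the NLH flow (Lemma~\ref{l63}, applicable since $\|\nabla u_{c}(t)\|_{2}$ is uniformly bounded) at $t=t_{k}$ for the sequence $NLH(t)\phi_{n}=u_{c}(t_{n}+t)$ and using $\|\nabla u_{c}\|_{2}\le\sigma_{c}\|\nabla Q\|_{2}$ produces the squeeze $\sigma_{c}^{2}-o_{k}(1)\le\cdots\le\sigma_{c}^{2}+o_{n}(1)$; the borderline case $\tilde\lambda=\sigma_{c}$ is handled by the same squeeze applied directly at $t=0$, without invoking the definition of $\sigma_{c}$. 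In either admissible case all inequalities in the squeeze become equalities in the limit; this, together with the $L^{2}$, $L^{V}$ and energy Pythagorean expansions and energy conservation (which transfers the smallness of $\|\nabla W_{n}^{M}(t_{k})\|_{2}$ back to $t=0$, as in the small-data/wave-operator estimates), forces $W_{n}^{M}\to0$ in $H^{1}$, $\psi^{j}=0$ for all $j\ge 2$, and $M(\psi^{1})=M(Q)$. Hence $u_{c}(t_{n})=\psi^{1}(\cdot-x_{n}^{1})+o_{n}(1)$ in $H^{1}$, which is the claim with $x_{n}=-x_{n}^{1}$, $w=\psi^{1}$. Granting the claim, the path $x(t)$ is obtained by the standard argument: continuity of $t\mapsto u_{c}(t)$ in $H^{1}$, compactness of the orbit on compact time intervals, and a uniform-localization/selection argument (of the type appearing in the proof of Lemma~\ref{l52}) that picks, for each $t$, a translation $x(t)$ keeping $u_{c}(t,\cdot-x(t))$ within a fixed precompact subset of $H^{1}$.

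The step I expect to be the main obstacle is the case $\tilde\lambda<\sigma_{c}$: one must apply the definition of $\sigma_{c}$ to the profile $v^{1}$, while a priori only $M(\psi^{1})\le M(Q)$ is known. This is resolved as in the proof of Lemma~\ref{l81}, where the equality case of the Pythagorean squeeze retroactively forces $M(\psi^{1})=M(Q)$ (alternatively one first rescales $v^{1}$ to mass $M(Q)$ via Proposition~\ref{p41} and Remark~\ref{r43}); the accompanying point that requires care is the bookkeeping of the possibly negative energies of the remaining non-scattering profiles, which is exactly what guarantees $\tilde\lambda\ge\lambda_{0}$ and where the assumption $\lambda_{0}<3/2$ is used.
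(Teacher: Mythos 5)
Your proposal is correct and follows the paper's proof essentially verbatim: reduce to sequential compactness modulo spatial translations, apply the profile reordering Lemma \ref{l64} to $\phi_n=u_c(t_n)$, and rerun the squeeze argument from the proof of Lemma \ref{l81} to conclude that only the single profile $\psi^1$ (with $t_n^1=0$ and $M(\psi^1)=M(Q)$) survives while $W_n^M\to 0$ in $H^1$, so that $u_c(t_n,\cdot+x_n^1)\to\psi^1$ in $H^1$. The additional details you spell out (the trichotomy on $\tilde\lambda$ versus $\sigma_c$, the mass-normalization caveat, and the passage from sequential compactness to the path $x(t)$) are exactly the points the paper treats identically or delegates to \cite{nonradial}.
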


\begin{proof}
As is showed in \cite{nonradial}, it suffices to prove that for each sequence of times~$t_{n}\rightarrow\infty,$~
there exists a sequence~$x_{n}$~such that, by passing to a subsequence,   $u(t_{n},\cdot-x_{n})$ converges in  $H^{1}.$

Taking  $\phi_{n}=u(t_{n})$ in Lemma~\ref{l64} and by definition of $u(t)=u_{c}(t),$
 similar to the proof of Lemma \ref{l81}, we obtain that
there exists at least one $1\leq j\leq M_{1}$ with
$$E(\psi^{j})\leq\max(\lim_{n}E(\phi_{n}),0).$$
Without loss of generality, we can take $j=1.$
Since, also
~$M(\psi^{1})\leq \lim_{n}M(\phi_{n})=M(Q),$~
 there exist~$\tilde{\lambda}\geq\lambda_{0}$~
such that
$$\frac{M(\psi^{1})E(\psi^{1})}{M(Q)E(Q)}
=3\tilde{\lambda}^{2}-2\tilde{\lambda}^{3}.$$
Note that by Lemma~\ref{l64},~~$v^{1}$~does not scatter, so
we must have~$\|\psi^{1}\|_{2}\|\nabla\psi^{1}\|_{2}\geq\tilde{\lambda}\|Q\|_{2}\|\nabla Q\|_{2}.$~\\
Then by the same way as in the proof of Lemma~\ref{l81}~, we get that
~$W_{n}^{M}(t_{k})\rightarrow0$~
in~$H^{1}$~ and~$ v^{j}\equiv0$~for all~$j\geq2.$~
 Since we know that~$W_{n}^{M}(t)$~is a scattering solution , this implies that
 \begin{equation}\label{8.2}
 W_{n}^{M}(0)=W_{n}^{M}\rightarrow0\ \ \ in\ \  H^{1}.
 \end{equation}
 Consequently, we have
 \begin{equation*}\label{}
u(t_{n})=NLH(-t_{n}^{1})\psi^{1}(x-x_{n}^{1})+W_{n}^{M}(x).
\end{equation*}
Note that by Lemma~\ref{l64},~ $t_{n}^{1}=0,$~and thus
 \begin{equation*}\label{}
u(t_{n},x+x_{n}^{1})=\psi^{1}(x)+W_{n}^{M}(x+x_{n}^{1}).
\end{equation*}
This equality and~\eqref{8.2}~imply our conclusion.
\end{proof}

Using the uniform-in-time ~$H^{1}$~concentration of~$u(t)=u_{c}(t)$ and by changing variables,
we can easily get
\begin{corollary}\label{c92}
For each~$\epsilon>0,$~there exists~$R>0$~such that for all ~$t,$~
$$\|u(t,\cdot-x(t))\|_{H^{1}(|x|\geq R)}\leq \epsilon.$$
\end{corollary}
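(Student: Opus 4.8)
The plan is to read off the uniform tail bound directly from the $H^{1}$-precompactness of the translated orbit established in Lemma~\ref{l91}, together with the elementary fact that any precompact subset of $H^{1}$ has uniformly small tails. So the proof will be a short compactness argument with no new analytic input.

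First I would record this fact for $K=\{u(t,\cdot-x(t)):t\ge0\}$, which is precompact in $H^{1}$ by Lemma~\ref{l91}. Given $\epsilon>0$, cover $K$ by finitely many balls $B(f_{1},\epsilon/2),\dots,B(f_{m},\epsilon/2)$ with $f_{i}\in H^{1}$. Since each $f_{i}\in H^{1}$, for every $i$ there is $R_{i}>0$ with $\|f_{i}\|_{H^{1}(|x|\ge R_{i})}\le\epsilon/2$; put $R=\max_{1\le i\le m}R_{i}$. For an arbitrary $t\ge0$, choose $i$ with $\|u(t,\cdot-x(t))-f_{i}\|_{H^{1}}\le\epsilon/2$; then
\[
\|u(t,\cdot-x(t))\|_{H^{1}(|x|\ge R)}\le\|u(t,\cdot-x(t))-f_{i}\|_{H^{1}}+\|f_{i}\|_{H^{1}(|x|\ge R_{i})}\le\epsilon ,
\]
which is the claimed estimate, with $R$ independent of $t$. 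If one prefers the conclusion in the original variable, i.e.\ $\|u(t)\|_{H^{1}(|x-x(t)|\ge R)}\le\epsilon$, it follows at once from the translation invariance of the $H^{1}$ norm; note that no information on the size or behaviour of the path $x(t)$ is used.

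There is essentially no obstacle here: the entire content is contained in Lemma~\ref{l91}, and the only thing to be careful about is the standard passage from total boundedness to a uniform tail bound via a finite subcover, as carried out above. The analogous statement for negative times follows verbatim from the negative-time version of Lemma~\ref{l91}.
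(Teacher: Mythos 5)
Your proof is correct and is exactly the standard argument the paper intends: it derives the uniform tail bound from the $H^{1}$-precompactness of $K$ in Lemma~\ref{l91} via a finite cover by small balls and the triangle inequality, which is precisely what the paper means by "using the uniform-in-time $H^{1}$ concentration and changing variables." No gap to report.
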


With the localization property of $u_{c}$, we show, similar to \cite{holmer10}, that $u_{c}$ must blow up in finite time  using the same method as that
in the proof of Proposition~\ref{p32} and Remark \ref{Hlocal}.
However, this contradicts the boundedness of $u_{c}$ in $H^{1}.$ Hence,  $u_{c}$ cannot exist and
 $\sigma_{c}=\infty$. As is argued in section 7, this indeed completes the proof of Theorem \ref{th1}.


\begin{thebibliography}{99}

\bibitem{TC}
{ T. Cazenave,} Semilinear Schr\"{o}dinger equations. Courant
Lecture Notes in Mathematics, 10. New York University, Courant
Institute of Mathematical Sciences, New York; American Mathematical
Society, Providence, RI, 2003. xiv+323 ISBN: 0-8218-3399-5.


\bibitem{nonradial}
{T. Duyckaerts, J.Holmer and S. Roudenko,} Scattering for the non-radial 3D cubic nonlinear Schr\"{o}dinger equation,
Math.Res.Letters, 15(2008),1233-1250.



\bibitem{gao}
{Y. Gao and H.Wu,} Scattering for the focusing
$\dot{H}^{\frac{1}{2}}$-critical Hartree equation in energy space,
Nonlinear Analysis, 73(2010), 1043-1056.




\bibitem{JG1}
{J. Ginibre and G. Velo,} On a class of nonlinear schr\"{o}dinger
equation. I. The Cauchy problems; II. Scattering theory, general
case, J. Func. Anal. 32(1979), 1-32,  33-71.

\bibitem{JG2}
{J. Ginibre and G. Velo,} Scattering theory in the energy space for
a class of Hartree equations, Contem. Math. 263(2000), 29-60.

\bibitem{JG3}
{J. Ginibre and G. Velo,} Long range scattering and modified wave
operators for some Hartree type equations II, Ann. Henri Poincar\'e
1(4)(2000), 753-800.

\bibitem{gm}
{L.Glangetas and F.Merle,} A geometrical approach of existence of blow up solutions in~$H^{1}$~for nonlinear Schr\"{o}dinger equation,
Rep.No.R95031, Laboratoire d'Analyse Num\'erique. Paris:University Pierre and Maree Curie.




\bibitem{radial}
{J. Holmer and S. Roudenko,} A sharp condition for scattering of the
radial 3d cubic nonlinear Schr\"{o}dinger equation, Comm. Math.
Phys. 282 (2008), no. 2,  435-467.

\bibitem{holmer10}
{J. Holmer and S. Roudenko,}Divergence of infinite-variance nonradial solutions to 3d NLS equation,
Comm.PDE, 35(2010),878-905.

\bibitem{kato}
{T. Kato,} An $L^{q,r}$-theory for nonlinear Schr\"{o}dinger
equations, Spectral and scattering theory and applications,  Adv. Stud. Pure Math., 23(223-238), Math. Soc. Japan, Tokyo, 1994.

\bibitem{KT}
{M. Keel and T. Tao,} Endpoint Strichartz estimates, Amer. J. Math.,
120 (1998),  955-980.

\bibitem{km}
{C. E. Kenig and F. Merle,} Global well-posedness, scattering, and
blow-up for the energy-critical focusing nonlinear Schr\"{o}dinger
equation in the radial case, Invent. Math. 166 (2006), no. 3,
645-675.

\bibitem{kpv}
{C.E.Kenig, G.Ponce and L.Vega,}Well-posedness and scattering
results for the generalized Korteweg-de Vries equation via the
contraction principle, Comm.Pure Appl.Math. 46(1993),no.4,527-620.

\bibitem{keraani}
{S. Keraani,} On the defect of compactness for the Strichartz
estimates of the Schr\"{o}dinger equation, J. Diff. Equat. 175
(2001), 353-392.

\bibitem{SK2}
{S. Keraani,} On the blow up phenomenon of the critical nonlinear
Schr\"{o}dinger equation, J. Funct. Anal. 235(2006), no. 1, 171-192.


\bibitem{Lenzmann}
{J.Krieger, E.Lenzmann and P.Raphael,}On stability of
pseudo-conformal blowup for  $L^2$ critical Hartree equation.
arXiv:0808.2324.

\bibitem{lieb}
{E.H.Lieb} Existence and uniqueness of the minimizing solution of Choquar's nonlinear equation, Stud. Appl. Math., 57(1977),
93-105.

\bibitem{liebbook}
{E.H. Lieb and M. Loss,} Analysis: Second Edition,Graduate Studies
in Mathematics, Vol. 14 (2001).

\bibitem{lions}
{P.L.Lions,} The concentration-compactness principle in the calculus
of variations.The locally compact case.II, Ann.Inst.H.Poincar\`e
Anal. Non Lin\`eaire,1(1984),223-283.

 \bibitem{miao}
 {C.Miao, G.Xu and L.Zhao,} Global well-posedness, scattering and blow-up for the energy-critical,
 defocusing Hartree equation in the radial case, Colloq. Math. 114(2) (2009),213-236.

 \bibitem{miaol2}
 {C.Miao, G.Xu and L.Zhao,} Global well-posedness and scattering for the mass-critical
Hartree equation with radial data, J. Math. Pures Appl. 91 (2009),
49-79.



 \bibitem{nakanishi}
{K.Nakanishi,}Energy scattering for Hartree equations, Math. Res.
Lett. 6(1999), 107-118.


\bibitem{OT}
{T. Ogawa and Y.Tsutsumi,} Blow-up of $H^{1}$ solution for the
nonlinear Schr\"{o}dinger equation, J. Diff. Equat. 92 (1991),
317-330.


\bibitem{raphael}
{P.Raphael,} Blow up of the critical norm for some radial
~$L^{2}$~super critical non linear Schr\"{o}dinger equations,
S\'eminaire \'E.D.P.(2005-2006), Expos\'e $n^{o}$ XVIII, 15 .












\bibitem{W1}
{M. Weinstein,} Nonlinear Schr\"{o}dinger equations and sharp
interpolation estimates, Comm. Math. Phys. 87 (1982/83), no. 4,
567-576.




\bibitem{yuan}
{J.Yuan,} Some research on nonlinear Schr\"{o}dinger equation, PhD thesis (in Chinese), Chinese Academy of
Methematics and System Science(2010).











\end{thebibliography}
\end{document}